% Author: VSI

\documentclass[11pt,dvipsnames]{extarticle}

\usepackage{amsfonts}
\usepackage{amsmath}
\usepackage{amssymb}
\usepackage{amscd}
\usepackage{amsthm}
\usepackage{mathrsfs}
\usepackage{graphicx}
\usepackage{wasysym}
\usepackage{enumerate}
\usepackage{xcolor}
\usepackage{tikz,tikz-cd}
\usepackage{geometry}
\usepackage{physics}
\usepackage[12hr]{datetime}
\usepackage{textcomp}
\usepackage[backref=page]{hyperref}
\usepackage{inputenc}
\usepackage{appendix}
\usepackage{mathtools}
\usepackage{xfrac}
\usepackage{nicefrac}
\usepackage{url}
\usepackage{environ}

\hypersetup{
	colorlinks=true, urlcolor=NavyBlue, linkcolor=Mahogany, citecolor=ForestGreen, pdfborder={0,0,0},
}

\newcommand{\mb}{\mathbb}

\newcommand{\mc}{\mathcal}
\newcommand{\mf}{\mathfrak}

\newcommand{\tsf}{\textsf}

\newcommand{\n}{\enspace}
\newcommand{\tx}{\text}

\newcommand{\ul}{\underline}

\newcommand{\supp}{\tx{supp}}

\newcommand{\spn}{\tx{span}}

\newcommand{\vst}{\vspace{0.2cm}}

\newcommand{\wt}{\widetilde}
\newcommand{\poly}{\tx{poly}}

\newcommand{\cl}{\tx{\normalfont cl}}
\newcommand{\zcl}{\tx{\normalfont Z-cl}}

\newcommand{\symcl}{\textrm{sym-cl}}

\newcommand{\LM}{\tx{\normalfont LM}}
\newcommand{\SM}{\tx{\normalfont SM}}

\newcommand{\Hilbert}{\tx{\normalfont H}}
\newcommand{\iref}[2]{(\hyperref[#2]{\ref*{#1}.\ref*{#2}})}

\newcommand{\modulo}[1]{\,(\tx{\normalfont mod }#1)}

\newcommand{\RM}{\tx{\normalfont RM}}

\newcommand{\wdeg}{\mathrm{wdeg}}
\newcommand{\diag}{\tx{\normalfont diag}}
\newcommand{\XOR}{\tx{\normalfont XOR}}
\newcommand{\coeff}{\tx{\normalfont coeff}}
\newcommand{\email}[1]{\href{mailto:#1}{\textcolor{NavyBlue}{\texttt{#1}}}}

\makeatletter
\def\thmhead@plain#1#2#3{%
	\thmname{#1}\thmnumber{\@ifnotempty{#1}{ }\@upn{#2}}%
	\thmnote{ {\the\thm@notefont\tsf{(#3)}}}}
\let\thmhead\thmhead@plain
\makeatother

\theoremstyle{definition}
\newtheorem{theorem}{Theorem}[section]
\newtheorem{fact}[theorem]{Fact}

\newtheorem*{claim*}{Claim}

\newtheorem{proposition}[theorem]{Proposition}
\newtheorem{observation}[theorem]{Observation}
\newtheorem{lemma}[theorem]{Lemma}
\newtheorem{corollary}[theorem]{Corollary}
\newtheorem{conjecture}[theorem]{Conjecture}
\newtheorem{remark}[theorem]{Remark}

\newtheorem{counterexample}[theorem]{Counterexample}
\newtheorem{question}[theorem]{Question}

\makeatletter
\NewEnviron{restatethm}[2]{%
	% #1 (mandatory): a label
	\protected@write\@auxout{}{%
		\string\@restatetheorem{#1}{\detokenize\expandafter{\BODY}}%
	}%
	\begin{theorem}[#2]\label{#1}\BODY\end{theorem}%
}
\newcommand{\@restatetheorem}[2]{%
	\expandafter\gdef\csname restatethm@#1\endcsname{#2}%
}
\newcommand{\restatethmnow}[2]{%
	\begingroup
	\renewcommand{\thetheorem}{\ref{#1}}%
	\begin{theorem}[#2]\csname restatethm@#1\endcsname\end{theorem}%
	\endgroup
}
\makeatother

\geometry{margin=2.5cm}
\setcounter{section}{0}

\title{On Vanishing Properties of Polynomials on Symmetric Sets of the Boolean Cube, in Positive Characteristic}
\author{Srikanth Srinivasan\thanks{On leave from Department of Mathematics, IIT Bombay.  Supported by a startup grant from Aarhus University.}\\
	Aarhus University, Aarhus, Denmark\\
	\email{srikanth@cs.au.dk} 
	\and 
	S. Venkitesh\thanks{Department of Mathematics, IIT Bombay.  Supported by a PhD Scholarship from IRCC, IIT Bombay.}\\
	IIT Bombay, Mumbai, India\\
	\email{venkitesh.mail@gmail.com}}
\date{}
\begin{document}
	
	\maketitle
	
	\begin{abstract}
		The finite-degree Zariski (Z-) closure is a classical algebraic object, that has found a key place in several applications of \emph{the polynomial method} in combinatorics.  In this work, we characterize the finite-degree Z-closures of a subclass of symmetric sets (subsets that are invariant under permutations of coordinates) of the Boolean cube, in positive characteristic.
		
		Our results subsume multiple statements on finite-degree Z-closures that have found applications in extremal combinatorial problems, for instance, pertaining to set systems (Heged\H{u}s, Stud. Sci. Math. Hung. 2010; Heged\H{u}s, arXiv 2021), and Boolean circuits (Hr\v{u}bes et al., ICALP 2019).  Our characterization also establishes that for the subclasses of symmetric sets that we consider, the finite-degree Z-closures have low computational complexity.
		
		A key ingredient in our characterization is a new variant of finite-degree Z-closures, defined using vanishing conditions on only symmetric polynomials satisfying a degree bound.
	\end{abstract}
	
	\section{Introduction}
	
	\emph{The polynomial method} is an ever-expanding set of algebraic techniques, which broadly entails capturing combinatorial objects by algebraic means, specifically using polynomials, and then employing algebraic tools to infer their combinatorial features.  While several instances of the polynomial method have been part of the combinatorist's toolkit for decades, development of this method has received more traction in recent times, owing to several breakthroughs like \n(i)  Dvir's solution~\cite{dvir-2009-kakeya} to \emph{the finite-field Kakeya problem}, followed by an improvement by Dvir, Kopparty, Saraf, and Sudan~\cite{dvir-kopparty-saraf-2013-kakeya}, \n(ii)   Guth and Katz~\cite{guth-katz-2015-distinct-distances} proving a conjecture by Erd\"os on the lower bound for \emph{the distinct distances problem}, \n(iii) solutions to \emph{the capset problem} by Croot, Lev, and Pach~\cite{croot-lev-pach-2017-progression},  and Ellenberg and Gijswijt~\cite{ellenberg-gijswijt-2017-progression}, to name a few.  The surveys by Dvir~\cite{dvir-2012-incidence-theorems} and Tao~\cite{tao-2014-algebraic-combinatorial-geometry}, and the book by Guth~\cite{guth-2016-polynomial-method} provide detailed accounts of the polynomial method.
	
	In this article, we are interested in one of the earliest avatars of the polynomial method, which has the following basic template:
	\begin{enumerate}[(i)]
		\item  Associate a combinatorial object to a nonzero polynomial in a way that the degree of the polynomial is at most the size of the object.
		\item  Use the vanishing properties of the nonzero polynomial to assert a lower bound on its degree.  This gives a lower bound on the size of the combinatorial object.
	\end{enumerate}
	For most applications, a study of this avatar, in fact, distills to a study of a classical algebraic object -- the \emph{finite-degree Zariski closure}.  For any \(S\subseteq\{0,1\}^n\) and \(d\in\mb{N}\), the \tsf{degree-\(d\) Zariski (Z-) closure} of \(S\), denoted by \(\zcl_{n,d}(S)\), is defined to be the common zero set, in \(\{0,1\}^n\), of all polynomials with degree at most \(d\), that vanish at each point in \(S\).  This is a closure operator\footnote{A closure operator on a poset \((P,\le)\) is any map \(\cl:P\to P\) satisfying:\n(i)  \(a\le\cl(a),\,\forall\,a\in P\),\n(ii)  \(\cl(a)\le\cl(b),\,\forall\,a,b\in P,\,a\le b\), and\n(iii)  \(\cl(\cl(a))=\cl(a),\,\forall\,a\in P\).  This is a well-studied set operator.  See, for instance, Birkhoff~\cite[Chapter V, Section 1]{birkhoff-1973-lattice} for an introduction.}, and was defined by Nie and Wang~\cite{nie-wang-2015-hilbert} towards obtaining a better understanding of the applications of the polynomial method to combinatorial geometry.  However, it has been studied implicitly even earlier. (See, for instance, Wei~\cite{wei-1991-GHM}, Heijnen and Pellikaan~\cite{heijnen-pellikaan-1998-GHM-Reed-Muller}, Keevash and Sudakov~\cite{keevash-sudakov-2005-min-rank-inclusion}, and Ben-Eliezer, Hod, and Lovett~\cite{ben-eliezer-hod-lovett-2012-low-degree-polys}.)
	
	\subsection{Motivation}
	
	For any \(a,b\in\mb{Z},\,a\le b\), by an abuse of notation, we will denote the \emph{integer interval} of all integers between \(a\) and \(b\) by \([a,b]\).  We also abbreviate \([n]\coloneqq[1,n]\) for any \(n\in\mb{Z}^+\).  For any prime \(p\), the finite field with \(p\) elements is denoted by \(\mb{F}_p\).
	
	Let us begin by considering a few interesting instances of combinatorial problems solved by results on finite-degree Z-closures, that also motivate our work.  Moreover, these can be easily seen to follow the above template that we mentioned earlier.  For any \(x\in\{0,1\}^n\), the \tsf{Hamming weight} of \(x\), denoted by \(|x|\), is defined to be the number of \(i\in[n]\) such that \(x_i=1\).
	
	\begin{itemize}
		\item  Heged\H{u}s~\cite{hegedus-2010-balancing} proved a lower bound for a special case of a \emph{balancing problem} for set systems using the following lemma.
		\begin{lemma}[\cite{hegedus-2010-balancing}]\label{lem:Hegedus}
			Let \(n=4p\), where \(p\) is a prime.  If \(f(\mb{X})\in\mb{F}_p[\mb{X}]\) satisfies
			\begin{enumerate}[(i)]
				\item  \(f(x)=0\) for all \(x\in\{0,1\}^n\) with \(|x|=2p\), and
				\item  \(f(y)\ne0\) for some \(y\in\{0,1\}^n\) with \(|y|=3p\),
			\end{enumerate}
			then \(\deg(f)\ge p\).
		\end{lemma}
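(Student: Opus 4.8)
The plan is to reduce, via an affine change of coordinates, to a clean statement about a single Hamming slice, and then extract that statement from a short counting argument together with Lucas' theorem.

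First I would make the routine normalizations: replacing \(f\) by its multilinearization (reducing modulo \(X_i^2=X_i\), which changes nothing on \(\{0,1\}^n\) and does not raise the degree) I may assume \(f\) is multilinear, and I suppose towards a contradiction that \(\deg(f)\le p-1\). The instinct to symmetrize \(f\) so that it depends only on \(|x|\) is tempting, but it fails outright in characteristic \(p\): averaging over \(S_n\) means dividing by \(n!=(4p)!\), which is \(0\) in \(\mb{F}_p\). Instead I would throw away most of hypothesis (i). After permuting coordinates, assume the point \(y\) from (ii) is supported on \([3p]\), and set
\[
g(X_1,\dots,X_{3p})\ :=\ f\bigl(1-X_1,\dots,1-X_{3p},\,0,\dots,0\bigr)\in\mb{F}_p[X_1,\dots,X_{3p}].
\]
Then \(g\) is multilinear with \(\deg(g)\le p-1\); for \(w\in\{0,1\}^{3p}\) one has \(g(w)=f(\mathbf 1-w,\,0,\dots,0)\), a point of \(\{0,1\}^{4p}\) of Hamming weight \(3p-|w|\), so by (i) \(g\) vanishes on every weight-\(p\) point of \(\{0,1\}^{3p}\); and \(g(\mathbf 0)=f(y)\ne 0\) by (ii). Thus it suffices to prove: a multilinear polynomial over \(\mb{F}_p\) on \(\{0,1\}^{3p}\) of degree at most \(p-1\) that vanishes on the entire weight-\(p\) slice has zero constant term.

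To prove this, write \(g=\sum_{|T|\le p-1}c_T\prod_{i\in T}X_i\), so that its constant term is \(c_\emptyset=g(\mathbf 0)\). Fix any coordinate \(v\in[3p]\) and sum the vanishing conditions \(g(\mathbf 1_A)=\sum_{T\subseteq A}c_T=0\) over all \(p\)-subsets \(A\) of \([3p]\setminus\{v\}\). Exchanging the order of summation, the coefficient of \(c_T\) is the number of such \(A\) containing \(T\), which is \(0\) if \(v\in T\) and \(\binom{3p-1-|T|}{p-|T|}\) if \(v\notin T\); hence
\[
\sum_{\substack{T\subseteq[3p]\setminus\{v\}\\ |T|\le p-1}} c_T\,\binom{3p-1-|T|}{p-|T|}\ =\ 0 \quad\text{in }\mb{F}_p.
\]
Now I would evaluate these binomial coefficients modulo \(p\) with Lucas' theorem. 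For \(1\le |T|\le p-1\), writing \(|T|=t\), the base-\(p\) digits of \(3p-1-t=2p+(p-1-t)\) and of \(p-t\) give \(\binom{3p-1-t}{p-t}\equiv\binom{2}{0}\binom{p-1-t}{p-t}\equiv 0\pmod p\), since \(p-1-t<p-t\). For \(t=0\) the same computation gives \(\binom{3p-1}{p}\equiv\binom{2}{1}\binom{p-1}{0}\equiv 2\pmod p\). So the sum above collapses to \(2c_\emptyset=0\), and as \(p\) is odd this forces \(c_\emptyset=0\), contradicting \(g(\mathbf 0)=f(y)\ne 0\). The case \(p=2\) (so \(n=8\)) I would dispatch by hand: if \(\deg(f)\le 1\) then \(f=c_0+\sum_i c_iX_i\), and vanishing on the weight-\(4\) slice — compare \(f\) at two \(4\)-sets differing in a single element — forces all \(c_i\) equal and (in \(\mb{F}_2\)) \(c_0=0\), so \(f\) also vanishes on the weight-\(6\) slice, contradicting (ii).

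The part I expect to carry all the weight is the reduction in the second paragraph: spotting that one should discard almost all of (i), restrict to the subcube \(\{0,1\}^{3p}\times\{0\}^p\), and complement coordinates so as to land exactly on the weight-\(p\) slice and the constant term. Once that is done, the remainder is a one-line averaging of the hypotheses plus a Lucas computation, and the only thing to monitor is the possible degeneracy of the binomial coefficients at small primes; in fact only \(p=2\) misbehaves (for \(p=3\) one has \(\binom{3p}{p}\equiv 0\), but the argument above never uses that coefficient, only \(\binom{3p-1}{p}\equiv 2\)). It is perhaps worth noting that the argument uses the hypothesis \(n=4p\) only through \(n\ge 3p\).
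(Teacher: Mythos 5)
Your proof is correct, and it takes a genuinely different route from the paper. The paper never proves Lemma~\ref{lem:Hegedus} in isolation: it is subsumed by Theorem~\ref{thm:clo-layer-p} (with \(i=2p\), \(d=p-1\), \(\ell=1\), so that \(\zcl_{4p,p-1}(2p)=2p\oplus p\ni 3p\)), whose proof goes through the auxiliary polynomials \(h_i\) and \(r_{i,j}\) of Lemma~\ref{lem:hr} -- built from elementary symmetric polynomials, Lucas's theorem and Newton interpolation -- combined with Reed--Muller duality or, equivalently, the Combinatorial Nullstellensatz. You instead restrict to the subcube \(\{0,1\}^{3p}\times\{0\}^{p}\), complement coordinates so that the weight-\(2p\) layer lands on the weight-\(p\) slice and the witness point lands on the origin, and then annihilate every coefficient except the constant term by summing the vanishing conditions over the \(p\)-subsets avoiding a fixed coordinate \(v\) and applying Lucas; the restriction to subsets avoiding \(v\) is exactly the right move, since summing over all \(p\)-subsets of \([3p]\) would leave the coefficients \(\binom{3p-t}{p-t}\equiv 1\pmod p\) alive for \(1\le t\le p-1\). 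The punch line \(2c_\emptyset=0\) is clean, and your separate treatment of \(p=2\), where the factor \(2\) degenerates, is both necessary and correct. What the paper's machinery buys is generality -- Theorem~\ref{thm:clo-layer-p} determines \(\zcl_{n,d}(i)\) for all \(i\) and \(d\) -- whereas your incidence count is tailored to this one configuration; what your argument buys is that it is entirely elementary and self-contained, and, as you observe, it needs only \(n\ge 3p\) rather than \(n=4p\).
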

		We know several proofs of Lemma~\ref{lem:Hegedus} by now:  Heged\H{u}s~\cite{hegedus-2010-balancing} gave a proof using Gr\"obner basis theory,  Srinivasan (see~\cite{alon-kumar-volk-2020-unbalancing}) gave a simpler proof using Fermat's Little Theorem and linear algebra, and Alon~\cite{alon-2020-problems-extremal-combinatorics-IV} gave a proof using the Combinatorial Nullstellensatz~\cite{alon-1999-combinatorial}.
		
		\item  The following lemma was proven by Hr\v{u}bes, Ramamoorthy, Rao and Yehudayoff~\cite{hrubes-ramamoorthy-rao-2019-balancing} to solve a different version of the balancing problem.  They used this lemma to exploit a connection between balancing set systems and \emph{depth-2 threshold circuits}, which are an important class of Boolean circuits studied in the theory of computation.
		\begin{lemma}[\cite{hrubes-ramamoorthy-rao-2019-balancing}]\label{lem:HRRY}
			Let \(n=2p\), where \(p\) is a prime.  If \(f(\mb{X})\in\mb{F}_p[\mb{X}]\) satisfies
			
			\n(i)  \(f(x)=0\) for all \(x\in\{0,1\}^n\) with \(|x|=p\), and \n(ii)  \(f(0^n)\ne0\), then \(\deg(f)\ge p\).
		\end{lemma}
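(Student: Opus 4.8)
The plan is to establish an exact linear identity over $\mb{F}_p$ that recovers the value of a polynomial of degree less than $p$ at $0^n$ from its values on the Hamming-weight-$p$ slice of the cube; the lemma then drops out immediately. Write $n = 2p$. First I would replace $f$ by its multilinear reduction modulo the relations $X_i^2 = X_i$, which changes none of the values of $f$ on $\{0,1\}^n$ and does not raise its degree; so it suffices to prove that every multilinear $f \in \mb{F}_p[\mb{X}]$ with $\deg(f) \le p-1$ satisfies
\[
f(0^n) \;=\; \sum_{\substack{S \subseteq [2,n] \\ |S| = p}} f(\mathbf{1}_S),
\]
where $\mathbf{1}_S \in \{0,1\}^n$ denotes the indicator vector of $S$. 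Every point $\mathbf{1}_S$ on the right has Hamming weight $p$, so hypothesis (i) makes the right-hand side vanish, whence any multilinear $f$ of degree $\le p-1$ satisfying (i) has $f(0^n) = 0$; since a polynomial agrees with its multilinear reduction on $\{0,1\}^n$, the same holds for an arbitrary such $f$, and contraposing against hypothesis (ii) gives $\deg(f) \ge p$.

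So the task reduces to the displayed identity, which I would verify by linearity on the monomial basis: it is enough to check it for $X_T = \prod_{i \in T} X_i$ with $t := |T| \le p-1$. The left-hand side equals $1$ if $T = \emptyset$ and $0$ otherwise. On the right, $X_T(\mathbf{1}_S) = 1$ exactly when $T \subseteq S$; if $1 \in T$ then no admissible $S$ (all of which avoid the coordinate $1$) contains $T$, so the sum is $0$, as wanted. If $1 \notin T$ the number of admissible $S$ is $\binom{(n-1)-t}{p-t} = \binom{2p-1-t}{p-1}$, so the identity reduces to the congruence $\binom{2p-1-t}{p-1} \equiv [\,t = 0\,] \pmod{p}$ for $0 \le t \le p-1$. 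Here $\binom{2p-1-t}{p-1} = \tfrac{1}{(p-1)!}\,(p+1-t)(p+2-t)\cdots(2p-1-t)$, a product of $p-1$ consecutive integers divided by $(p-1)!$; for $1 \le t \le p-1$ this window contains the integer $p$ (since $p+1-t \le p \le 2p-1-t$), so the binomial coefficient is divisible by $p$, while for $t = 0$ the window $\{p+1,\dots,2p-1\}$ reduces modulo $p$ to $\{1,\dots,p-1\}$, so the binomial coefficient is $\equiv (p-1)!/(p-1)! = 1$. This gives the congruence and with it the lemma.

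The one step that calls for an idea rather than a computation is the choice of which weight-$p$ points to use on the right-hand side. The reflexive move — symmetrizing $f$ over all coordinate permutations, equivalently summing over the whole weight-$p$ slice with uniform weights — is useless in characteristic $p$, because $(2p)! \equiv 0 \pmod{p}$ leaves no averaging available; concretely $\sum_{|x|=p} f(x)$ equals $2 f(0^n) + \sum_{t \ge 1}\big(\sum_{|T|=t}[\,\text{coefficient of } X_T \text{ in } f\,]\big)$, never $f(0^n)$ in isolation. Deliberately breaking the symmetry by discarding the weight-$p$ points that use one fixed coordinate is exactly what collapses the coefficient arithmetic to the clean congruence above, and it is essential here that $n = 2p$: it is precisely for $1 \le t \le p-1$ that the length-$(p-1)$ window of consecutive integers is forced to straddle the multiple $p$. (For completeness, the bound is tight: $1 - \sum_{|S|=p}\prod_{i \in S} X_i$ has degree $p$, vanishes on the weight-$p$ slice, and equals $1$ at $0^n$.)
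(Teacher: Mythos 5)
Your proof is correct, and it is genuinely different from the route the paper takes. The paper never proves Lemma~\ref{lem:HRRY} directly: it is absorbed into Theorem~\ref{thm:clo-layer-p}, whose proof runs through Lucas's theorem, the polynomials $h_i$ and $r_{i,j}$ of Lemma~\ref{lem:hr} (built from elementary symmetric polynomials and Newton-polynomial interpolation), and then either the explicit description of $\RM_p(n,d)^\perp$ (Theorem~\ref{thm:RM-usual} plus Fact~\ref{fact:dual-code}) or a Combinatorial Nullstellensatz degree count. You instead exhibit a single explicit linear functional $f\mapsto f(0^n)-\sum_{S\subseteq[2,n],\,|S|=p}f(\mathbf{1}_S)$ annihilating all multilinear polynomials of degree at most $p-1$, verified monomial by monomial via the congruence $\binom{2p-1-t}{p-1}\equiv[t=0]\pmod p$; your checks of that congruence (the window of $p-1$ consecutive integers straddling $p$ exactly when $1\le t\le p-1$) are right, as is the observation that the naive full symmetrization over $\ul{p}$ fails because $\binom{2p}{p}\equiv 2$ and $\binom{2p-t}{p-t}\equiv1$ for $1\le t\le p-1$. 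Conceptually your identity is precisely a vector of $\RM_p(2p,p-1)^\perp$ supported on $\{0^n\}\cup\ul{p}$ — the same object Fact~\ref{fact:dual-code} asks for — but you construct it by direct coefficient arithmetic rather than as $(-1)^{|a|}$ times the evaluation of a low-degree product $h\cdot r$. The trade-off is the usual one: your argument is shorter, self-contained, and avoids Lucas and the Reed--Muller duality theorem entirely, but it is tailored to the single data point $(n,i,d,j)=(2p,p,p-1,0)$, whereas the paper's machinery yields the closure of every layer at every degree. (One cosmetic point: your proof confirms $0\in\zcl_{2p,p-1}(p)$, consistent with Theorem~\ref{thm:clo-layer-p}; the paper's informal gloss of Lemma~\ref{lem:HRRY} as ``$0\not\in\zcl_{2p,p-1}(p)$'' has the membership sign reversed.)
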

	
		\item  Recently, Heged\H{u}s~\cite{hegedus-2021-L-balancing} proved the following lemma, and gave a lower bound for an \emph{\(L\)-balancing problem} for set systems.
		\begin{lemma}[\cite{hegedus-2021-L-balancing}]\label{lem:Hegedus-again}
			Let \(p\) be a prime, \(n,\ell\in\mb{Z}^+\) and \(i\in[p^\ell-1,n-p^\ell+1]\).  For any \(f(\mb{X})\in\mb{F}_p[\mb{X}]\) such that \(\deg(f)\le p^\ell-1\), if \(f(x)=0\) for all \(x\in\{0,1\}^n\) with \(|x|=i\), then \(f(x)=0\) for all \(x\in\{0,1\}^n\) with \(|x|\in\{j\in[0,n]:j\equiv i\modulo{p^\ell}\}\).
		\end{lemma}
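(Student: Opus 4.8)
The plan is to reduce, by restricting to a carefully chosen subcube, to a clean combinatorial identity about one multilinear polynomial vanishing on a single Hamming slice, and then finish with Lucas' theorem. Write $d:=p^\ell-1$. Since we only evaluate on $\{0,1\}^n$ we may assume $f$ is multilinear (this does not increase $\deg f$). Fix a point $y\in\{0,1\}^n$ with $|y|=j$, $j\equiv i\modulo{p^\ell}$; we must show $f(y)=0$. The case $j=i$ is the hypothesis. The case $j>i$ reduces to the case $j<i$ by applying the coordinatewise complement $x\mapsto\mathbf 1-x$, which fixes $\deg f$, carries the weight-$i$ slice to the weight-$(n-i)$ slice (and $n-i$ again lies in $[p^\ell-1,n-p^\ell+1]$), and carries $y$ to a point of weight $n-j\equiv n-i$. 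So assume $j<i$ and set $w:=i-j$; then $w$ is a positive multiple of $p^\ell$, in particular $w>d$, and $w+d=(i-j)+(p^\ell-1)\le n-j$ because $i\le n-p^\ell+1$.

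Next I would pass to a subcube of dimension exactly $N:=w+d$. Put $T:=\supp(y)$ and choose any $B\subseteq[n]\setminus T$ with $|B|=n-i-p^\ell+1$ (a legal choice, since $0\le n-i-p^\ell+1\le n-j$, using $i\le n-p^\ell+1$ and $j<i$); let $F:=[n]\setminus(T\cup B)$, so $|F|=N$. Setting $X_\nu=1$ for $\nu\in T$ and $X_\nu=0$ for $\nu\in B$ in $f$ yields a multilinear $g\in\mb F_p[X_\nu:\nu\in F]$ with $\deg g\le d$ such that, writing $\mathbf 0$ for the all-zeros point of $\{0,1\}^F$ and $\mathbf 1_U$ for the point with support $U$, we have $g(\mathbf 0)=f(y)$, while $g(\mathbf 1_U)=0$ for every $U\subseteq F$ with $|U|=w$ (the corresponding point of $\{0,1\}^n$ has support $T\cup U$ of size $i$). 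Thus it suffices to prove: if $g\in\mb F_p[X_1,\dots,X_N]$ is multilinear of degree $\le d$, with $N=w+d$ and $p^\ell\mid w$, and $g$ vanishes on the weight-$w$ slice of $\{0,1\}^N$, then $g(\mathbf 0)=0$.

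For this, expand $g=\sum_{|S|\le d}c_S\prod_{\nu\in S}X_\nu$ and set $b_k:=\sum_{|S|=k}c_S$, so $g(\mathbf 0)=b_0$. Summing the identity $g(\mathbf 1_U)=\sum_{S\subseteq U}c_S$ over all $U$ of size $w$ gives the (integer, hence mod-$p$) identity
\[
0=\sum_{|U|=w}g(\mathbf 1_U)=\sum_{k=0}^{d}\binom{N-k}{w-k}\,b_k=\sum_{k=0}^{d}\binom{w+d-k}{d}\,b_k ,
\]
using $\binom{N-k}{w-k}=\binom{w+d-k}{(w+d-k)-(w-k)}=\binom{w+d-k}{d}$. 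Since $p^\ell\mid w$ and $0\le d-k\le d<p^\ell$, there are no carries in the base-$p$ addition $w+(d-k)$, so the base-$p$ digits of $w+d-k$ in positions $<\ell$ are exactly those of $d-k$, while $d=p^\ell-1$ has digit $p-1$ in each position $<\ell$ and $0$ elsewhere; Lucas' theorem then gives $\binom{w+d-k}{d}\equiv\prod_{a<\ell}\binom{(d-k)_a}{p-1}\modulo{p}$, where $(d-k)_a$ denotes the $a$-th base-$p$ digit of $d-k$. For $k=0$ every factor is $\binom{p-1}{p-1}=1$; for $1\le k\le d$ we have $0\le d-k<p^\ell-1$, so some digit $(d-k)_a$ with $a<\ell$ is $<p-1$, making that factor, hence the whole product, vanish. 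Therefore $0=b_0=g(\mathbf 0)=f(y)$, and as $y$ was an arbitrary point of weight $j$, we are done.

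The main obstacle, I expect, is spotting \emph{which} reduction to make: symmetrizing $f$ over $S_n$ — the first thing one tries for a statement about Hamming weights — destroys all information here, because $n!$ vanishes mod $p$ as soon as $n\ge p$. The resolution above is to cut down to the precise dimension $N=w+d$, which is exactly what forces $\binom{N-k}{w-k}=\binom{w+d-k}{d}$ to be $\equiv 1\pmod p$ for $k=0$ and $\equiv 0$ for $1\le k\le d$; after that, Lucas' theorem does everything. What remains is only to check the routine points: the index inequalities that make the subcube well-defined, and that the complement involution genuinely handles $j>i$.
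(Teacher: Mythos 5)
Your proof is correct, and it takes a genuinely different route from the paper's. The paper obtains this lemma as the special case $d=p^\ell-1$ of its Theorem~\ref{thm:clo-layer-p}: after the same kind of complementation/translation/padding reductions (Proposition~\ref{pro:clo-prop}) it reduces to showing $n\in\zcl_{n,d}(d)$ for $n=d+p^\ell k$, and then constructs two auxiliary polynomials --- $h_d$, built from the elementary symmetric polynomials $\sigma_{p^t}$ as an indicator of the residue class mod $p^\ell$, and $r_{d,n}$, built from Newton polynomials composed with $\sigma_{p^\ell u}$ --- whose product has degree at most $n-d-1$ and support exactly $\{1^n\}\cup\ul{d}$; it then concludes either by Reed--Muller duality (Theorem~\ref{thm:RM-usual} plus Fact~\ref{fact:dual-code}) or by a Combinatorial Nullstellensatz degree count. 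You instead restrict to a subcube of dimension exactly $N=w+d$, sum the vanishing conditions over the weight-$w$ slice, and kill all coefficients $b_k$ with $k\ge1$ via Lucas; I checked the index bookkeeping (the choice of $B$ is legal precisely because $i\le n-p^\ell+1$ and $j<i$) and the congruence $\binom{w+d-k}{d}\equiv\prod_{a<\ell}\binom{(d-k)_a}{p-1}\modulo{p}$, and both are right. Your argument is more elementary --- no auxiliary polynomials, no integer-valued-polynomial machinery, no code duality --- but it leans essentially on $d=p^\ell-1$ having all base-$p$ digits equal to $p-1$ below position $\ell$, which is exactly what makes only the $k=0$ coefficient survive; the paper's heavier construction is what lets it handle arbitrary $d$ with $p^{\ell-1}\le d\le p^\ell-1$ in Theorem~\ref{thm:clo-layer-p}, of which this lemma is only the extreme case. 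Your averaging-over-a-slice trick is close in spirit to the proof of Lemma~\ref{lem:Hegedus} attributed to Srinivasan in~\cite{alon-kumar-volk-2020-unbalancing}, and is a perfectly good self-contained proof of the statement as quoted.
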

	\end{itemize}
	
	It should be noted that in each of the above results mentioned, we prescribe vanishing conditions on polynomials at all points having a fixed Hamming weight.  This naturally introduces \emph{symmetric sets} of the Boolean cube in our discussion.  Let \(\mf{S}_n\) denote the symmetric group on \([n]\).  A subset \(S\subseteq\{0,1\}^n\) is said to be \tsf{symmetric} if
	\[
	(x_1,\ldots,x_n)\in S,\,\sigma\in\mf{S}_n\quad\implies\quad(x_{\sigma(1)},\ldots,x_{\sigma(n)})\in S.
	\]
	It is easy to see that \(S\subseteq\{0,1\}^n\) is symmetric if and only if
	\[
	x\in S,\,y\in\{0,1\}^n,\,|y|=|x|\quad\implies\quad y\in S.
	\]
	Thus, symmetric sets of the Boolean cube are determined by the Hamming weights of the points in them, and therefore, there is a one-to-one correspondence between subsets \(E\subseteq[0,n]\) and symmetric sets \(\ul{E}\coloneqq\{x\in\{0,1\}^n:|x|\in E\}\).  
	
	It is easy to check that the finite-degree Z-closure of a symmetric set is a symmetric set.  So we will conveniently, wherever applicable, identify a symmetric set \(\ul{E},\,E\subseteq[0,n]\) with the set \(E\) itself; in particular, for \(E\subseteq[0,n]\) and \(d\in\mb{N}\), we will identify (and denote) the symmetric set \(\zcl_{n,d}(\ul{E})\subseteq\{0,1\}^n\) by \(\zcl_{n,d}(E)\subseteq[0,n]\).
	
	Indeed, the results mentioned above are, in fact, statements about finite-degree Z-closures of special symmetric sets.  In our notation, assuming we are working over the field \(\mb{F}_p\), Lemma~\ref{lem:Hegedus} states that \(3p\not\in\zcl_{4p,p-1}(2p)\), Lemma~\ref{lem:HRRY} states that \(0\not\in\zcl_{2p,p-1}(p)\), and Lemma~\ref{lem:Hegedus-again} states that \(\zcl_{n,p^\ell-1}(i)=\{j\in[0,n]:j\equiv i\modulo{p^\ell}\}\) for \(i\in[p^\ell-1,n-p^\ell+1]\).  In light of these results, we concern ourselves with the following question.
	\begin{question}\label{ques:main}
		Let \(\mb{F}\) be a field with positive characteristic.  Characterize (combinatorially) the finite-degree Z-closures \(\zcl_{n,d}(E)\), for all \(E\subseteq[0,n],\,d\in[0,n]\).
	\end{question}

	\subsection{Our results}
	
	Our first result subsumes Lemmas~\ref{lem:Hegedus}, \ref{lem:HRRY} and \ref{lem:Hegedus-again}.  Fix any field \(\mb{F}\) with positive characteristic \(p\).  By a \tsf{layer} in \(\{0,1\}^n\), we mean a symmetric set \(\ul{i},\,i\in[0,n]\).  We determine the finite-degree Z-closures of single layers.  This result could also be obtained from the proof techniques in Heged\H{u}s~\cite{hegedus-2010-balancing}, but we give what we believe is a simpler proof, not involving any Gr\"obner basis or Hilbert function computations, and that is similar to the proof by Srinivasan (see~\cite{alon-kumar-volk-2020-unbalancing}) for Lemma~\ref{lem:Hegedus}.
	
	For any \(E\subseteq[0,n]\) and \(\ell\in\mb{N}\), define
	\[
	E\oplus p^\ell=\bigcup_{j\in E}\{t\in[0,n]:t\equiv j\modulo{p^\ell}\}.
	\]
	For any \(d\in\mb{N}\), define \(\ell_p(d)=\lceil\log_p(d+1)\rceil\).  Thus, \(\ell_p(d)\) is the unique integer \(\ell\in\mb{N}\) such that \(p^{\ell-1}\le d\le p^\ell-1\).
	
	We have the following result, which answers Question~\ref{ques:main} for single layers.
	\begin{theorem}[Finite-degree Z-closure of a single layer]\label{thm:clo-layer-p}
		Let $i,d\in[0,n]$ and $\ell=\ell_p(d)$.  Then
		\[
		\zcl_{n,d}(i)=\begin{cases}
			\{i\},&i\not\in[d,n-d]\\
			i\oplus p^\ell ,&i\in[d,n-d]
		\end{cases}
		\]
	\end{theorem}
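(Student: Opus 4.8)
\quad The plan is to prove the two inclusions separately: the inclusion of \(\zcl_{n,d}(i)\) into the claimed set by exhibiting low‑degree test polynomials, and the reverse inclusion (which is needed only when \(i\in[d,n-d]\)) by an induction on \(n\) that strips off coordinates until a single base case remains.

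\emph{Upper bound.} Since the elementary symmetric polynomial \(e_k\) agrees with \(x\mapsto\binom{|x|}{k}\) on \(\{0,1\}^n\), for each \(k\le d\) the polynomial \(e_k(\mb{X})-\binom ik\) has degree \(k\le d\) and vanishes on the layer \(\ul{i}\), so a weight‑\(j\) point can lie in \(\zcl_{n,d}(i)\) only if \(\binom jk\equiv\binom ik\modulo p\) for all \(k\le d\). Working in \(\mb{F}_p[t]\), where \((1+t)^m=\prod_{s\ge0}(1+t^{p^s})^{m_s}\) for \(m=\sum_s m_sp^s\), the coefficient of \(t^{p^s}\) in \((1+t)^m\) is \(m_s\); comparing the coefficients of \(t^{p^0},\dots,t^{p^{\ell-1}}\) (whose exponents are all \(\le p^{\ell-1}\le d\)) forces \(i\) and \(j\) to agree in base‑\(p\) digits \(0,\dots,\ell-1\), i.e. \(j\in i\oplus p^\ell\). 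This gives one inclusion when \(i\in[d,n-d]\) and reduces the case \(i\notin[d,n-d]\) to proving \(\zcl_{n,d}(i)\subseteq\{i\}\). For that, by the degree‑preserving substitution \(\mb{X}\mapsto\mbf{1}-\mb{X}\) (sending \(\ul{i}\) to \(\ul{n-i}\)) we may assume \(i<d\), and then for every \(d\)-subset \(S\) of \([n]\) the monomial \(\prod_{e\in S}X_e\) has degree \(d\), vanishes on every layer of weight \(<d\) (hence on \(\ul{i}\)), and the common zero set of these monomials is \(\bigcup_{k<d}\ul{k}\), which meets \(i\oplus p^\ell\) only in \(\{i\}\) because \(0\le i<d\le p^\ell-1\). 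Hence \(\zcl_{n,d}(i)=\{i\}\), equality coming from extensivity of a closure operator.

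\emph{Lower bound, by induction on \(n\).} Assume \(i\in[d,n-d]\), let \(f\) with \(\deg f\le d\) vanish on \(\ul{i}\), and fix \(j\in(i\oplus p^\ell)\cap[0,n]\) with \(j\ne i\). Using \(\mb{X}\mapsto\mbf{1}-\mb{X}\) we may assume \(j>i\), so \(j-i\) is a positive multiple of \(p^\ell\); in particular \(j-i>d\). If \(j<n\): for any \(z\) with \(|z|=j\), restrict \(f\) to the subcube supported on \(\supp(z)\) (remaining coordinates set to \(0\)); the restriction has degree \(\le d\), vanishes on the weight‑\(i\) layer of \(\{0,1\}^{\supp(z)}\cong\{0,1\}^j\), and \(i\in[d,j-d]\) since \(j-i>d\), so by the inductive hypothesis in dimension \(j<n\) it vanishes on the top layer of \(\{0,1\}^j\), i.e. \(f(z)=0\), hence \(f\) vanishes on \(\ul{j}\). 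If \(j=n\), so \(z=\mbf{1}\): after \(\mb{X}\mapsto\mbf{1}-\mb{X}\) the task becomes to show a degree‑\(\le d\) polynomial vanishing on \(\ul{n-i}\) vanishes at \(\mbf{0}\); if \(i>d\) this follows by fixing one coordinate to \(0\) (dropping to dimension \(n-1\), where \(n-i\in[d,n-1-d]\) and the inductive hypothesis applies), and if \(i=d\) we are in the base case.

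\emph{Base case and main obstacle.} One must show: if \(N=d+cp^\ell\) with \(c\ge1\) and \(\deg f\le d\) with \(f\) vanishing on the weight‑\(d\) layer of \(\{0,1\}^N\), then \(f(\mbf{1})=0\). Following Srinivasan's idea for Lemma~\ref{lem:Hegedus}, I would sum \(f\) against the weight function \(g\colon\{0,1\}^N\to\mb{F}\) given by \(g(\mbf{1})=1\), \(g(x)=-1\) for \(|x|=d\), and \(g(x)=0\) otherwise. On one hand \(\sum_x f(x)g(x)=f(\mbf{1})-\sum_{|x|=d}f(x)=f(\mbf{1})\), since \(f\) vanishes pointwise on \(\ul{d}\). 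On the other hand, taking \(f\) multilinear (reducing modulo \(X_e^2-X_e\)) and writing \(f=\sum_{|T|\le d}c_T\prod_{e\in T}X_e\), one gets \(\sum_x f(x)g(x)=\sum_{|T|\le d}c_T\bigl(\sum_{x\supseteq T}g(x)\bigr)=\sum_{|T|\le d}c_T\bigl(1-\binom{N-|T|}{d-|T|}\bigr)\), and the crucial point is that \(\binom{N-|T|}{d-|T|}\equiv1\modulo p\) for every \(|T|\le d\): putting \(u=d-|T|<p^\ell\), the base‑\(p\) digits of \(u\) and of \(cp^\ell\) occupy disjoint positions, so \(N-|T|=u+cp^\ell\) has \(r\)-th digit \(u_r\) for \(r<\ell\) while \(u_r=0\) for \(r\ge\ell\), whence Lucas' theorem gives \(\binom{u+cp^\ell}{u}\equiv\prod_r\binom{(u+cp^\ell)_r}{u_r}\equiv1\). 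Thus \(\sum_x f(x)g(x)=0\), so \(f(\mbf{1})=0\). The place where I expect the most care is not this base case --- which is short once the right \(g\) is spotted --- but the bookkeeping in the induction: one has to check that the layer index stays inside the window \([d,\cdot-d]\) at each restriction, so that the inductive hypothesis is legitimately applicable, and that every configuration other than the isolated base case above does admit such a dimension‑reducing move.
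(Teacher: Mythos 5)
Your proof is correct and diverges from the paper's in the crucial base case. The upper bounds (via $\sigma_{p^t}-i_t$ and low-degree monomials) match Proposition~\ref{pro:layer-obs}, and your induction-with-restriction argument is an alternative bookkeeping for the reduction the paper carries out via Proposition~\ref{pro:clo-prop}; both funnel the problem to showing $1^N\in\zcl_{N,d}(d)$ for $N=d+cp^\ell$, $c\ge1$ (and your dimension checks are sound: $j-i\ge p^\ell>d$ keeps the layer index inside the required window at each restriction, and the $j=n$ sub-case reduces correctly by complementation). The genuine divergence is how that base case is settled. The paper constructs an explicit polynomial $h_d(\mb{X})\cdot r_{d,N}(\mb{X})$ of degree at most $N-d-1$ whose support is contained in $\{1^N\}\cup\ul{d}$ (Lemma~\ref{lem:hr}, built from Lucas's theorem and Newton polynomials), and then concludes via the Reed--Muller dual description (Theorem~\ref{thm:RM-usual}) or the Combinatorial Nullstellensatz. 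You instead pair $f$ directly against the weight function $g$ equal to $1$ at $1^N$, $-1$ on $\ul{d}$, and $0$ elsewhere, and verify by Lucas's theorem alone (via $\binom{cp^\ell+u}{u}\equiv 1\modulo{p}$ for $u<p^\ell$) that $g$ annihilates every multilinear monomial of degree at most $d$, so that $f(1^N)=\sum_x f(x)g(x)=0$. Your version is shorter and more self-contained --- it dispenses with the Newton-polynomial machinery (Lemma~\ref{lem:Newton-AW}) and with the Reed--Muller duality theorem --- at the cost of not exhibiting an explicit low-degree dual certificate. Implicitly the two routes are equivalent by duality: your $g$ is a hand-built element of $\RM_p(N,d)^\perp$ with the prescribed support, which is precisely the object whose existence the paper derives structurally.
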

	
	We will then proceed to describe the finite-degree Z-closures of general symmetric sets.  We do not manage to determine these for all symmetric sets, but for a large subclass.  In this context, a variant of the finite-degree Z-closure shows itself very naturally.
	
	Since our interest lies in symmetric sets, it begs the question whether vanishing conditions on just symmetric polynomial functions would suffice to understand the finite-degree Z-closures.  Towards this, for any \(E\subseteq[0,n]\) and \(d\ge0\), we define the \tsf{degree-\(d\) symmetric closure of \(\ul{E}\)}, denoted by \(\symcl_{n,d}(\ul{E})\), to be the common zero set, in \(\{0,1\}^n\), of all symmetric polynomial functions with degree at most \(d\), that vanish at each point in \(\ul{E}\).  As in the case of finite-degree Z-closures, it is easy to see that the finite-degree symmetric closure of a symmetric set is symmetric, and so we will again identify the symmetric sets with subsets of \([0,n]\); in particular, we will identify (and denote) \(\symcl_{n,d}(\ul{E})\subseteq\{0,1\}^n\) by \(\symcl_{n,d}(E)\subseteq[0,n]\).
	
	Our second result is a characterization of Z-closures of symmetric sets in terms of their symmetric closures, under some conditions.  Thus, we answer Question~\ref{ques:main} for a special subclass of symmetric sets.
	\begin{restatethm}{thm:mod-p-main}{Finite-degree Z-closures of symmetric sets}
		Let \(d\in\mb{N},\,\ell=\ell_p(d)\).  If \(n\ge4p^\ell-1\), then for any \(E\subseteq[d,n-d]\), we have \(\zcl_{n,d}(E)=\symcl_{n,d}(E)\).
	\end{restatethm}
	
	The finite-degree symmetric closures are also interesting due to them having \emph{low computational complexity} relative to finite-degree Z-closures.  It is known that in the worst-case, computing the finite-degree Z-closure of an arbitrary subset of \(\{0,1\}^n\) will take time exponential in \(n\); in contrast, we will show by an easy linear algebraic argument that the finite-degree symmetric closure of any symmetric set in \(\{0,1\}^n\) can be computed in time polynomial in \(n\).  As a consequence, by Theorem~\ref{thm:mod-p-main}, for \(d\in\mb{N},\,\ell=\ell_p(d)\), if \(n\ge4p^\ell-1\), then for any \(E\subseteq[d,n-d]\), we can compute \(\zcl_{n,d}(E)\) in time polynomial in \(n\).

	\subsection{Related work}
	
	We note here that prior to our work, there have been attempts to characterize other notions related to finite-degree Z-closures -- namely, \emph{Gr\"obner basis}, \emph{standard monomials}, and \emph{affine Hilbert function} of the vanishing ideal -- for special cases of symmetric sets of the Boolean cube, over fields of both positive and zero characteristic.  In fact, the Gr\"obner basis and the affine Hilbert function are stronger notions than the finite-degree Z-closures.  For detailed introductions, refer for instance, Cox, Little, and O'Shea~\cite{cox-2015-ideals} -- Chapter 2 (for Gr\"obner basis), Chapter 5 (for standard monomials\footnote{The terminology `standard monomials', however, is not used in Cox, Little, and O'Shea~\cite{cox-2015-ideals}.}), and Chapter 9 (for affine Hilbert function).
	
	Let \(\mb{F}\) be a field with either positive or zero characteristic.  We will assume the basic definitions as given in~\cite{cox-2015-ideals}.  For any \(S\subseteq\{0,1\}^n\), let \(\SM_\le(S)\) denote the set of \emph{standard monomials} of the vanishing ideal of \(S\) with respect to a monomial order \(\le\).\footnote{A linear order \(\le\) on the set of all monomials in \(n\) indeterminates \(X_1,\ldots,X_n\), is a \emph{monomial order} if \n(i)  \(1\le u\) for every monomial \(u\), and \n(ii)  for monomials \(u,v\) with \(u\le v\), we have \(uw\le vw\) for every monomial \(w\).}  Further, for any \(d\in[0,n]\), let \(\Hilbert_d(S)\) denote the value of the degree-\(d\) affine Hilbert function for \(S\).  Given a monomial order \(\le\), let \(\LM_\le(P)\) denote the \emph{leading monomial} (with respect to \(\le\)) of the polynomial \(P(\mb{X})\in\mb{F}[\mb{X}]\), where \(\mb{X}=(X_1,\ldots,X_n)\) are the indeterminates.  For any \(\alpha\in\mb{N}^n\), we denote the monomial \(\mb{X}^\alpha=X_1^{\alpha_1}\cdots X_n^{\alpha_n}\).
	
	The following are basic facts that show the inter-relationships between the Gr\"obner bases, standard monomials, affine Hilbert functions, and the finite-degree Z-closures.  These follow easily from the definitions, and results from the relevant chapters in~\cite{cox-2015-ideals}.
	\begin{fact}
		\begin{enumerate}[(a)]\label{fac:basic-Grobner}
			\item  Let \(\le\) be any monomial order, and \(\mc{G}_\le(S)\) be a Gr\"obner basis of the vanishing ideal of \(S\) with respect to \(\le\).  Then for any \(S\subseteq\{0,1\}^n\),
			\[
			\SM_\le(S)=\{\mb{X}^\alpha:\mb{X}^\alpha\tx{ does not divide }\LM_\le(P),\tx{ for any }P(\mb{X})\in\mc{G}_\le(S)\}.
			\]
			
			\item  Let \(\le\) be any monomial order, and \(\mc{G}_\le(S)\) be a Gr\"obner basis of the vanishing ideal of \(S\) with respect to \(\le\).  Then for any \(S\subseteq\{0,1\}^n,\,x\in\{0,1\}^n\),
			\[
			x\in\zcl_{n,d}(S)\quad\iff\quad P(x)=0,\tx{ for all }P(\mb{X})\in\mc{G}_{\le}(S),\,\deg(P)\le d.
			\]
			
			\item  For any \(S\subseteq\{0,1\}^n,\,x\in\{0,1\}^n\), and \(d\in[0,n]\),
			\[
			x\in\zcl_{n,d}(S)\quad\iff\quad\Hilbert_d(S\cup\{x\})=\Hilbert_d(S).
			\]
		\end{enumerate}
	\end{fact}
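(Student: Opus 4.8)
The plan is to verify each of the three items directly from the definitions and from standard facts about Gr\"obner bases and reduction, all of which can be found in the indicated chapters of Cox, Little, and O'Shea~\cite{cox-2015-ideals}. Throughout, write $I = I(S)$ for the vanishing ideal of $S$ in $\mb{F}[\mb{X}]$, and fix a monomial order $\le$ with Gr\"obner basis $\mc{G}_\le(S)$ of $I$; recall that $\mathrm{in}_\le(I)$ denotes the initial ideal, generated by the leading monomials of elements of $I$.

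For part (a), the key input is the standard characterization that $\SM_\le(S)$ — the set of monomials not lying in $\mathrm{in}_\le(I)$ — forms an $\mb{F}$-vector space basis for $\mb{F}[\mb{X}]/I$. Since $\mc{G}_\le(S)$ is a Gr\"obner basis, $\mathrm{in}_\le(I)$ is generated (as a monomial ideal) by $\{\LM_\le(P) : P \in \mc{G}_\le(S)\}$, so a monomial $\mb{X}^\alpha$ lies in $\mathrm{in}_\le(I)$ if and only if it is divisible by $\LM_\le(P)$ for some $P \in \mc{G}_\le(S)$. Negating this gives exactly the displayed description of $\SM_\le(S)$; I would spell this out in one or two sentences and cite the relevant proposition in~\cite[Chapter 5]{cox-2015-ideals}.

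For part (b), the forward direction is immediate: if $x \in \zcl_{n,d}(S)$ then by definition $x$ is a common zero of all degree-$\le d$ polynomials vanishing on $S$, and every $P \in \mc{G}_\le(S)$ with $\deg(P) \le d$ is such a polynomial (it lies in $I = I(S)$), so $P(x) = 0$. For the converse, suppose $P(x) = 0$ for every $P \in \mc{G}_\le(S)$ with $\deg(P) \le d$, and let $Q \in \mb{F}[\mb{X}]$ be any polynomial with $\deg(Q) \le d$ that vanishes on $S$, i.e. $Q \in I$. Dividing $Q$ by $\mc{G}_\le(S)$ yields a remainder $0$ (since $Q \in I$ and $\mc{G}_\le(S)$ is a Gr\"obner basis), so $Q = \sum_P h_P \cdot P$ with the standard degree control from the division algorithm: $\LM_\le(h_P P) \le \LM_\le(Q)$ for each term, hence every $P$ appearing with $h_P \ne 0$ satisfies $\deg(P) \le \deg(\LM_\le(P)) \le \deg(\LM_\le(Q)) \le \deg(Q) \le d$. (Here one uses that the monomial order $\le$, being a well-order refining divisibility, satisfies $\deg(u) \le \deg(v)$ whenever $u \le v$ among monomials of a multidegree-graded decomposition — more carefully, one invokes that the division algorithm never increases the maximal degree appearing, so all $h_P P$ have degree $\le d$ and in particular each contributing $P$ has $\deg(P) \le d$.) Therefore $Q(x) = \sum_P h_P(x) P(x) = 0$, and since $Q$ was arbitrary, $x \in \zcl_{n,d}(S)$.

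For part (c), I would argue via the affine Hilbert function $\Hilbert_d(S) = \dim_{\mb{F}} \big(\mb{F}[\mb{X}]_{\le d} / (I(S) \cap \mb{F}[\mb{X}]_{\le d})\big)$, where $\mb{F}[\mb{X}]_{\le d}$ is the space of polynomials of degree $\le d$. Since $S \subseteq S \cup \{x\}$ we always have $I(S \cup \{x\}) \subseteq I(S)$, hence $\Hilbert_d(S \cup \{x\}) \ge \Hilbert_d(S)$, and equality holds if and only if $I(S) \cap \mb{F}[\mb{X}]_{\le d} = I(S \cup \{x\}) \cap \mb{F}[\mb{X}]_{\le d}$, i.e. every degree-$\le d$ polynomial vanishing on $S$ also vanishes at $x$ — which is precisely the statement $x \in \zcl_{n,d}(S)$. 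The only subtlety is to note that the inclusion of ideals restricted to degree $\le d$ is strict exactly when there exists a degree-$\le d$ polynomial vanishing on $S$ but not at $x$, which I would state explicitly; the dimension count then follows since one space is contained in the other.

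\textbf{Main obstacle.} None of the three parts is deep; the only point requiring care is the degree bookkeeping in the converse of part (b) — ensuring that the division algorithm applied to a degree-$\le d$ element of $I$ only ever uses Gr\"obner basis elements of degree $\le d$. This is where I would be most careful to cite the precise form of the division algorithm and the property $\deg(\LM_\le(P)) = \deg(P)$ is false in general, so one must instead track total degree directly through the division steps (each partial remainder has degree $\le d$, and each subtracted term $\frac{\LT_\le(\text{remainder})}{\LT_\le(P)} \cdot P$ has degree $\le d$, forcing $\deg(P) \le d$). I expect a clean reference to~\cite[Chapter 2]{cox-2015-ideals} handles this, and the rest is essentially unwinding definitions.
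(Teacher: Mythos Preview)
The paper does not actually prove this Fact; it merely asserts that the three items ``follow easily from the definitions, and results from the relevant chapters in~\cite{cox-2015-ideals}.'' So there is no paper argument to compare against, and your task is really to supply a proof from scratch. Your arguments for parts (a) and (c) are correct and standard.

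Part (b), however, has a genuine gap, and in fact the statement as literally written (for \emph{any} monomial order and \emph{any} Gr\"obner basis) is false. You correctly flag the danger that \(\deg(\LM_\le(P))\) need not equal \(\deg(P)\) for a non-graded order, but your proposed repair---that each partial remainder in the division of \(Q\) stays in degree \(\le d\)---is not true. Concretely, over \(\mb{F}_2\) with lex order \(X_1>X_2>X_3\), take \(S=\{000,111\}\subseteq\{0,1\}^3\). The reduced Gr\"obner basis of \(I(S)\) is \(\{X_1+X_3,\,X_2+X_3,\,X_3^2+X_3\}\), so \(\mathrm{in}_\le(I(S))=(X_1,X_2,X_3^2)\). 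Now
\[
\mc{G}=\{\,X_1+X_2X_3,\ X_2+X_3^2,\ X_3^2+X_3\,\}
\]
has exactly the same leading monomials and is therefore also a Gr\"obner basis of \(I(S)\); every element of \(\mc{G}\) has degree \(2\). With \(d=1\) the right-hand side of (b) is vacuous, hence holds for every \(x\in\{0,1\}^3\). But \(\zcl_{3,1}(S)=S\), since \(X_1+X_3\) and \(X_2+X_3\) are degree-\(1\) polynomials in \(I(S)\) whose common zero set in the cube is exactly \(\{000,111\}\). So the equivalence in (b) fails. (If you try to run your division argument on \(Q=X_1+X_3\in I(S)\), the very first step subtracts \(X_1+X_2X_3\) and produces a remainder \(X_3+X_2X_3\) of degree \(2\): the degree goes up.)

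The fix is not in the bookkeeping but in the hypotheses: part (b) is correct, with your division-algorithm argument, provided \(\le\) is a \emph{degree-compatible} monomial order (e.g.\ grlex or grevlex), since then \(\deg(\LM_\le(P))=\deg(P)\) for every \(P\), and the representation \(Q=\sum_P h_P P\) produced by division satisfies \(\deg(h_P P)\le\deg(Q)\). You should either add this hypothesis explicitly, or note that the Fact as stated requires it; the paper's informal pointer to~\cite{cox-2015-ideals} does not by itself justify the claim for arbitrary monomial orders and arbitrary (non-reduced) Gr\"obner bases.
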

	Some immediate corollaries, for symmetric sets of the Boolean cube, are as follows.
	\begin{corollary}
		\begin{enumerate}[(a)]\label{cor:basic-Grobner}
			\item  Let \(\le\) be any monomial order, and \(\mc{G}_\le(S)\) be a Gr\"obner basis of the vanishing ideal of \(S\) with respect to \(\le\).  Then for any \(E\subseteq[0,n],\,j\in[0,n]\),
			\[
			j\in\zcl_{n,d}(E)\quad\iff\quad P|_{\ul{j}}=0,\tx{ for all }P(\mb{X})\in\mc{G}_{\le}(\ul{E}),\,\deg(P)\le d.
			\]
			
			\item  For any \(E\subseteq[0,n],\,j\in[0,n]\), and \(d\in[0,n]\),
			\[
			j\in\zcl_{n,d}(E)\quad\iff\quad\Hilbert_d(\ul{E\cup\{j\}})=\Hilbert_d(\ul{E}).
			\]
		\end{enumerate}
	\end{corollary}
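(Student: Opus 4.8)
Both parts are direct consequences of Fact~\ref{fac:basic-Grobner}, once we translate the statements there about a general subset $S \subseteq \{0,1\}^n$ into the language of symmetric sets. The key bridge is the observation, already noted in the excerpt, that for a symmetric set $\ul{E}$ and any $x \in \{0,1\}^n$, whether $x \in \zcl_{n,d}(\ul{E})$ depends only on $|x|$; and dually, for any polynomial $P(\mb{X})$ and any $x,y$ with $|x|=|y|$, we have $P(x) = P(y)$ whenever $P$ is \emph{symmetrized}. The plan is to invoke Fact~\ref{fac:basic-Grobner} at a point $x$ with $|x| = j$ and then read off the symmetric-set reformulation.

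For part~(a): I would start from Fact~\ref{fac:basic-Grobner}(b) applied to $S = \ul{E}$. Pick any $x \in \{0,1\}^n$ with $|x| = j$. Then $x \in \zcl_{n,d}(\ul{E})$ iff $P(x) = 0$ for all $P(\mb{X}) \in \mc{G}_\le(\ul{E})$ with $\deg(P) \le d$. By the identification of $\zcl_{n,d}(\ul{E})$ with $\zcl_{n,d}(E) \subseteq [0,n]$, the left side is exactly the statement $j \in \zcl_{n,d}(E)$. For the right side, since $\ul{E}$ is a symmetric set the condition ``$P(x) = 0$ for every $x$ with $|x| = j$'' is precisely what the notation $P|_{\ul{j}} = 0$ abbreviates; note it does not matter that we only chose one representative $x$ of weight $j$ on the left, because membership in $\zcl_{n,d}(\ul{E})$ is weight-determined, so if it holds for one weight-$j$ point it holds for all of them, and hence $P|_{\ul j} = 0$ for the relevant $P$. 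This gives the claimed equivalence.

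For part~(b): this is even more immediate. Apply Fact~\ref{fac:basic-Grobner}(c) with $S = \ul{E}$ and any $x \in \{0,1\}^n$ of weight $j$. It gives $x \in \zcl_{n,d}(\ul{E}) \iff \Hilbert_d(\ul{E} \cup \{x\}) = \Hilbert_d(\ul{E})$. Now $\ul{E} \cup \{x\}$ need not be symmetric, but $\zcl_{n,d}(\ul E)$ is, so $x \in \zcl_{n,d}(\ul E)$ iff $\ul j \subseteq \zcl_{n,d}(\ul E)$ iff $\ul{E \cup \{j\}} \subseteq \zcl_{n,d}(\ul E)$, and since the Z-closure only adds points to $\zcl_{n,d}(\ul E)$ that are already in it, this is equivalent to $\zcl_{n,d}(\ul{E\cup\{j\}}) = \zcl_{n,d}(\ul E)$ at the level of the degree-$d$ Hilbert function, i.e.\ $\Hilbert_d(\ul{E \cup \{j\}}) = \Hilbert_d(\ul{E})$. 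Rewriting with the $[0,n]$-identifications yields $j \in \zcl_{n,d}(E) \iff \Hilbert_d(\ul{E \cup \{j\}}) = \Hilbert_d(\ul{E})$.

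The only mild subtlety — and the one point I would be careful to spell out — is the passage from ``the defining condition of $\zcl$ holds at one chosen representative $x$ with $|x| = j$'' to ``it holds at the whole layer $\ul{j}$''. This is exactly the already-established fact that finite-degree Z-closures of symmetric sets are symmetric: once $x \in \zcl_{n,d}(\ul E)$ for one $x$ of weight $j$, symmetry of the closure forces $\ul j \subseteq \zcl_{n,d}(\ul E)$, and conversely. With that in hand, both equivalences are purely a matter of unwinding notation from Fact~\ref{fac:basic-Grobner}, so I do not anticipate any real obstacle; the corollary is essentially bookkeeping on top of the general facts.
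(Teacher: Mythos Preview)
Your proposal is correct and follows exactly the approach the paper intends: the paper gives no explicit proof, simply labeling the statement an ``immediate corollary'' of Fact~\ref{fac:basic-Grobner}, and your argument is precisely the natural unpacking of that---apply the general fact at a single representative $x$ of weight $j$, then use the already-noted symmetry of $\zcl_{n,d}(\ul{E})$ to pass between ``holds at $x$'' and ``holds on all of $\ul{j}$'' (and, for (b), between $\ul{E}\cup\{x\}$ and $\ul{E\cup\{j\}}$ via monotonicity of $\Hilbert_d$). The only cosmetic point is that your part~(b) chain through ``$\zcl_{n,d}(\ul{E\cup\{j\}})=\zcl_{n,d}(\ul E)$ at the level of the Hilbert function'' could be replaced by the cleaner observation that $\Hilbert_d$ is monotone in the set, so $\Hilbert_d(\ul E)\le\Hilbert_d(\ul E\cup\{x\})\le\Hilbert_d(\ul{E\cup\{j\}})$ collapses iff both ends agree.
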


	Some of the prior work on Gr\"obner bases, standard monomials, affine Hilbert functions, and finite-degree Z-closures for symmetric sets are as follows.
	\begin{itemize}
		\item  Wilson~\cite{wilson-1990-diagonal-incidence} determined the diagonal form (over all fields) for incidence matrices associated to certain symmetric sets.  This was then used to determine the affine Hilbert function \(\Hilbert_d(\ul{i})\), for all \(d,i\in[0,n],\,i\in[d,n-d]\).
		\item  Anstee, R\'onyai, and Sali~\cite{anstee-ronyai-sali-2002-shattering} defined \emph{order shattering} for set systems, and gave a characterization of standard monomials for any subset of the Boolean cube, with respect to all lexicographic orders, in terms of order shattered sets.  Friedl and R\'onyai~\cite{friedl-ronyai-2003-order-shattering} used this characterization and generalized the result of Wilson~\cite{wilson-1990-diagonal-incidence} on incidence matrices.
		\item  Heged\H{u}s and R\'onyai~\cite{hegedus-ronyai-2003-grobner-complete-uniform} characterized the \emph{reduced} Gr\"obner basis for a single layer \(\ul{i}\), for all \(i\in[0,n]\), with respect to all lexicographic orders (over all fields), and further generalized this characterization to \emph{linear Sperner families} (over characteristic zero) in~\cite{hegedus-ronyai-2018-linear-sperner}.
		\item  Felszeghy, R\'ath, and R\'onyai~\cite{felszeghy-rath-ronyai-2006-lex} studied a \emph{lex game} to give a combinatorial criterion for a squarefree monomial to be a standard monomial of a symmetric set (over all fields).
		\item  Felszeghy, Heged\H{u}s, and R\'onyai~\cite{felszeghy-hegedus-ronyai-2009-complete-wide} obtained characterizations of a Gr\"obner basis, standard monomials, as well as the affine Hilbert function, for the symmetric set \(\ul{[d,d+\ell]\oplus p^k}\), for \(k\in\mb{Z}^+\) and \(d,\ell\in[0,n],\,d+\ell\le n\) (over positive characteristic \(p\)).
		\item  Over characteristic zero, Bernasconi and Egidi~\cite{bernasconi-egidi-1999-hilbert-symmetric} determined the affine Hilbert functions of all symmetric sets.  This can be used, via Corollary~\ref{cor:basic-Grobner} (b), to determine the finite-degree Z-closures of all symmetric sets.  Further, a more combinatorial characterization of the finite-degree Z-closures of all symmetric sets, independent of affine Hilbert function computations, was given by the second author~\cite{venkitesh-2021-covering-symmetric-sets}.
	\end{itemize}
	
	\section{Preliminaries}
	
	Since we will work over fields of positive characteristic, and since we are only concerned with subsets of the Boolean cube, we can and will assume throughout that we have fixed the field \(\mb{F}_p\), where \(p\) is prime.  So we have the Boolean cube \(\{0,1\}^n\subseteq\mb{F}_p^n\).
	
	For any set of polynomials \(\mc{P}\) in \(\mb{F}_p[\mb{X}]\), where \(\mb{X}=(X_1,\ldots,X_n)\) are the indeterminates, let \(\mc{Z}(\mc{P})=\{x\in\{0,1\}^n:P(x)=0,\tx{ for all }P(\mb{X})\in\mc{P}\}\).
	
	A fundamental result in our context is Alon's Combinatorial Nullstellensatz, which we state here for the Boolean cube.
	\begin{theorem}[\cite{alon-1999-combinatorial}]\label{thm:CN}
		The set of monomials \(\{\mb{X}^\alpha:\alpha\in\{0,1\}^n\}\) is a basis of the vector space of all \(\mb{F}_p\)-valued functions on \(\{0,1\}^n\).
	\end{theorem}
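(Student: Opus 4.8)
The plan is to prove the statement by combining a dimension count with a direct check of linear independence. The space $V$ of all $\mb{F}_p$-valued functions on $\{0,1\}^n$ has dimension $|\{0,1\}^n| = 2^n$ over $\mb{F}_p$, and the set $\{\mb{X}^\alpha : \alpha \in \{0,1\}^n\}$ consists of exactly $2^n$ multilinear monomials, one for each $\alpha$. Hence it is enough to show that these $2^n$ monomials, regarded as functions on $\{0,1\}^n$, are linearly independent over $\mb{F}_p$; a linearly independent set of size $\dim V$ inside $V$ is automatically a basis.

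For the independence step I would first record the elementary evaluation rule: for $\alpha,\beta \in \{0,1\}^n$, the value of $\mb{X}^\beta$ at the point $\alpha$ is $\prod_{i \in \supp(\beta)} \alpha_i$, which equals $1$ if $\supp(\beta) \subseteq \supp(\alpha)$ and $0$ otherwise; equivalently, $\mb{X}^\beta(\alpha) = 1$ precisely when $\beta \le \alpha$ in the coordinatewise partial order on $\{0,1\}^n$, and $\mb{X}^\beta(\alpha) = 0$ otherwise. Now suppose $\sum_{\alpha} c_\alpha \mb{X}^\alpha$ is the zero function on $\{0,1\}^n$ with not all $c_\alpha = 0$, and pick $\gamma$ minimal, in the coordinatewise order, among all $\alpha$ with $c_\alpha \ne 0$. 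Evaluating the sum at the point $\gamma$ and invoking the rule above, each term with $\beta \not\le \gamma$ contributes $0$, while each term with $\beta \le \gamma$ and $\beta \ne \gamma$ has $c_\beta = 0$ by minimality of $\gamma$; so the whole sum collapses to $c_\gamma$, forcing $c_\gamma = 0$, a contradiction. Therefore all coefficients vanish, the monomials are linearly independent, and the theorem follows.

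A second, perhaps more transparent, route proves spanning directly, which also suffices by the cardinality count: for each $a \in \{0,1\}^n$ the function $\delta_a(\mb{X}) = \prod_{i : a_i = 1} X_i \cdot \prod_{i : a_i = 0}(1 - X_i)$ is a multilinear polynomial that takes value $1$ at $a$ and $0$ at every other point of $\{0,1\}^n$; the $\delta_a$ span $V$, and expanding the products shows each $\delta_a$ lies in the $\mb{F}_p$-span of $\{\mb{X}^\alpha : \alpha \in \{0,1\}^n\}$. Either way, there is no real obstacle here — the only place calling for a moment's care is the appeal to minimality in the first argument (and the observation that, since we only list multilinear monomials, no reduction $X_i^2 \mapsto X_i$ is needed); this lemma is recorded only as a basic tool for the developments that follow.
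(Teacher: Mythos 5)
The paper does not prove this statement; it is imported with a citation to Alon's Combinatorial Nullstellensatz paper, so there is no in-paper argument to compare against. Your proposal is correct: the dimension count $2^n = 2^n$ together with either of your two arguments — the minimal-support evaluation showing linear independence, or the expansion of the indicator functions $\delta_a(\mb{X}) = \prod_{a_i=1} X_i \prod_{a_i=0}(1-X_i)$ showing spanning — is a complete and standard proof, and no step is missing. (The only point worth stating explicitly, which you do flag, is that minimality in the coordinatewise partial order is well-defined because the set of $\alpha$ with $c_\alpha \ne 0$ is finite and nonempty.)
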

	Note that \(\mb{X}^\alpha,\,\alpha\in\{0,1\}^n\) are precisely all the squarefree monomials in the indeterminates \(\mb{X}=(X_1,\ldots,X_n)\).  So Theorem~\ref{thm:CN} implies the following:  for any polynomial \(Q(\mb{X})\in\mb{F}_p[\mb{X}]\), there exists a unique polyomial \(\widetilde{Q}(\mb{X})\in\mb{F}_p[\mb{X}]\) which is a linear combination of squarefree monomials, such that \(Q=\wt{Q}\) as functions on \(\{0,1\}^n\).  Henceforth, for convenience, we will identify \(Q(\mb{X})\) with \(\wt{Q}(\mb{X})\); in other words, for any polynomial \(Q(\mb{X})\) that we define, that is not necessarily a linear combination of squarefree monomials, we will assume that \(Q(\mb{X})\) has been immediately replaced by \(\wt{Q}(\mb{X})\), and denoted by \(Q(\mb{X})\) itself, without mention.  Also relevant is that, as a consequence, while considering the finite-degree Z-closure \(\zcl_{n,d}\), we can restrict \(d\in[0,n]\).
	
	\paragraph*{Finite-degree Z-closure.}  For any \(S\subseteq\{0,1\}^n\), let \(\mc{I}_{n,d}(S)\) denote the vector space of all polynomials in \(\mb{F}_p[\mb{X}]\) having degree at most \(d\), that vanish at each point in \(S\).  Recall that for any \(d\in[0,n]\) and \(S\subseteq\{0,1\}^n\), the \tsf{degree-\(d\) Zariski (Z-) closure} is defined by \(\zcl_{n,d}(S)=\mc{Z}(\mc{I}_{n,d}(S))\).  It is easy to check that for any \(E\subseteq[0,n]\), \(\zcl_{n,d}(\ul{E})\) is a symmetric set.  So we will stick to our identification of symmetric sets of \(\{0,1\}^n\) with subsets of \([0,n]\), and use the notation \(\zcl_{n,d}(E)\) instead.  For any \(E\subseteq\mb{Z}\) and \(a,b\in\mb{Z}\), define \(a+bE=\{a+bx:x\in E\}\).  We make the following preliminary observations.
	\begin{proposition}[Properties of finite-degree Z-closures of symmetric sets]\label{pro:clo-prop}
		
		Consider any \(d\in[0,n]\) and \(E\subseteq[0,n]\).
		\begin{enumerate}[(a)]
			\item  If $j\in\zcl_{n,d}(E)$, then $j\in\zcl_{m,d}(E)$, for all $m>n$.
			\item  If $j\in\zcl_{n,d}(E)$, then $n-j\in\zcl_{n,d}(n-E)$.
			\item  If $j\in\zcl_{n,d}(E)$, then $j+k\in\zcl_{n+k,d}(E+k)$, for all $k>0$.
		\end{enumerate}
	\end{proposition}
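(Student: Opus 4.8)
The plan is to prove all three parts by a single device: each assertion has the form ``$j$ lies in one finite-degree Z-closure, therefore $j'$ lies in another,'' and I will argue the contrapositive by transporting a nonvanishing witness polynomial from the second Boolean cube to the first along an affine substitution of the variables. Two observations are used throughout. First, $\zcl_{\ast,d}(\underline{E})$ is a symmetric set (as already noted), so if a layer $\underline{j'}$ fails to lie in a given finite-degree Z-closure, then the single representative point having its first $j'$ coordinates $1$ and the rest $0$ also fails to lie in it; hence there is a polynomial of degree $\le d$ vanishing on all the prescribed layers but nonzero at that representative. Second, under the paper's standing convention every polynomial is silently replaced by its multilinear representative, and this replacement never raises the total degree; so after substituting degree-$\le 1$ expressions for the variables into a polynomial of degree $\le d$, the result still has a representative of degree $\le d$. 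With these in hand, each part is a matter of choosing the right substitution and checking that it carries layers to layers as required.

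For (a), we may assume $j\le n$ (otherwise $j\in\zcl_{n,d}(E)$ is vacuous). Suppose $j\notin\zcl_{m,d}(E)$ with $m>n$. Pick $g\in\mb{F}_p[X_1,\dots,X_m]$ with $\deg g\le d$, vanishing on $\underline{i}\subseteq\{0,1\}^m$ for every $i\in E$, and with $g(1^{j}0^{m-j})\ne0$. Put $h(X_1,\dots,X_n):=g(X_1,\dots,X_n,0,\dots,0)$. Then $\deg h\le d$; appending $m-n$ zeros to a weight-$i$ point of $\{0,1\}^n$ gives a weight-$i$ point of $\{0,1\}^m$, so $h$ vanishes on $\underline{i}\subseteq\{0,1\}^n$ for every $i\in E$; and $h(1^{j}0^{n-j})=g(1^{j}0^{m-j})\ne0$. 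Hence $j\notin\zcl_{n,d}(E)$.

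Parts (b) and (c) follow the same pattern, only the substitution changes. For (b), use $X_i\mapsto 1-X_i$, an involution of $\{0,1\}^n$ sending a weight-$i$ point to a weight-$(n-i)$ point: from a witness $g$ for $n-j\notin\zcl_{n,d}(n-E)$, with $g$ vanishing on the layers indexed by $n-E$ and $g(z)\ne0$ for some $z$ of weight $n-j$, set $h(\mb{X}):=g(1-X_1,\dots,1-X_n)$; then $\deg h\le d$, and for $x$ of weight $i\in E$ the point $1-x$ has weight $n-i\in n-E$ so $h(x)=0$, while $h(1-z)=g(z)\ne0$ at a weight-$j$ point. For (c), use the substitution appending $k$ coordinates fixed to $1$, which carries a weight-$i$ point of $\{0,1\}^n$ to a weight-$(i+k)$ point of $\{0,1\}^{n+k}$: from a witness $g\in\mb{F}_p[X_1,\dots,X_{n+k}]$ for $j+k\notin\zcl_{n+k,d}(E+k)$, normalized by symmetry so that $g(1^{j}0^{n-j}1^{k})\ne0$, put $h(X_1,\dots,X_n):=g(X_1,\dots,X_n,1,\dots,1)$; then $\deg h\le d$, the point $(x,1^k)$ has weight $i+k\in E+k$ so $h$ vanishes on $\underline{i}\subseteq\{0,1\}^n$ for $i\in E$, and $h(1^{j}0^{n-j})\ne0$.

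I do not expect any real obstacle; the only point that deserves a line of care is the degree bookkeeping under substitution. Substituting degree-$\le1$ expressions into $g$ produces a polynomial of naive total degree $\le\deg g\le d$, and the subsequent reduction to the squarefree representative (via $X_i^2=X_i$) can only decrease the degree, so the constraint ``$\deg\le d$'' is genuinely preserved; granting this, everything else is routine verification of how the three chosen substitutions act on Hamming weights.
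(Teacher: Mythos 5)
Your proposal is correct and is essentially the paper's own argument: the paper proves each part directly by pulling back an arbitrary degree-$\le d$ polynomial along the same three substitutions (appending $0$'s, $X_i\mapsto 1-X_i$, appending $1$'s) and using symmetry of the closure to reduce to the representative point $1^{j}0^{n-j}$, whereas you phrase the identical argument contrapositively via a witness polynomial. The degree and Hamming-weight bookkeeping you flag is exactly the check the paper performs.
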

	
	\begin{proof}
		\begin{enumerate}[(a)]
			\item  Let $j\in\zcl_{n,d}(E)$ and $m>n$.  It is enough to show that $1^j0^{m-j}\in\zcl_{m,d}(E)$.  Consider any $f(X_1,\ldots,X_m)\in\mc{I}_{m,d}(E)$.  Define $f^*(X_1,\ldots,X_n)=f(X_1,\ldots,X_n,0^{n-m})$.  Note that for any $x\in\ul{E}$ in $\{0,1\}^n$, we have $x0^{n-m}\in\ul{E}$ in $\{0,1\}^m$.  Also $\deg f^*\le\deg f\le d$ and hence $f^*(X_1,\ldots,X_n)\in\mc{I}_{n,d}(E)$.  Then $f(1^j0^{m-j})=f^*(1^j0^{n-j})=0$, since $j\in\zcl_{n,d}(E)$.  Thus $1^j0^{m-j}\in\zcl_{m,d}(E)$.
			
			\item  Let \(j\in\zcl_{n,d}(E)\).  Consider any \(f(X_1,\ldots,X_n)\in\mc{I}_{n,d}(n-E)\).  It is enough to show that \(1^{n-j}0^j\in\zcl_{n,d}(n-E)\).  Define \(f^*(X_1,\ldots,X_n)=f(1-X_1,\ldots,1-X_n)\).  Then we have \(f(x_1,\ldots,x_n)=0\), where \((x_1,\ldots,x_n)\in\ul{n-E}\) if and only if \(f^*(1-x_1,\ldots,1-x_n)=0\), where \((1-x_1,\ldots,1-x_n)\in\ul{E}\).  So \(f^*(X_1,\ldots,X_n)\in\mc{I}_{n,d}(E)\).  This gives \(f^*(0^{n-j}1^j)=0\), since \(j\in\zcl_{n,d}(E)\).  Thus \(f(1^{n-j}0^j)=0\), that is, \(1^{n-j}0^j\in\zcl_{n,d}(n-E)\).
			
			\item  Let $j\in\zcl_{n,d}(E)$ and \(k>0\).  It is enough to show that $1^j0^{n-j}1^k\in\zcl_{n+k,d}(E+k)$.  Consider any $f\in\mc{I}_{n+k,d}(E+k)$.  Define $f^*(x)=f(x1^k)$, for all $x\in\{0,1\}^n$.  Note that for any $x\in\ul{E}$ in $\{0,1\}^n$, we have $x1^k\in\ul{E+k}$ in $\{0,1\}^{n+k}$.  Also $\deg f^*\le\deg f\le d$ and hence $f^*\in\mc{I}_{n,d}(E)$.  Then $f(1^j0^{n-j}1^k)=f^*(1^j0^{n-j})=0$, since $j\in\zcl_{n,d}(E)$.  Thus $1^j0^{n-j}1^k\in\zcl_{n+k,d}(E+k)$.\qedhere
		\end{enumerate}
	\end{proof}

	\paragraph*{\(p\)-ary representation of nonnegative integers.}  For any \(d\in\mb{N}\), define \(\ell_p(d)=\lceil\log_p(d+1)\rceil\).  Thus, \(\ell_p(d)\) is the unique integer \(\ell\in\mb{N}\) such that \(p^{\ell-1}\le d\le p^\ell-1\).  For any \(n\in\mb{N}\), we fix the following notation via the \(p\)-ary expansion of \(n\),
	\[
	n=\sum_{t\ge0}n_tp^t,\quad\tx{where }n_t\in[0,p-1],\tx{ for all }t\ge0.
	\]
	In other words, \(n_t\) denotes the \(t\)-th digit of \(n\) in its \(p\)-ary expansion, for all \(t\ge0\).  The following observations are immediate.
	\begin{observation}\label{obs:p-ary-prop}
		Let \(m,n,\ell\in\mb{N}\).
		\begin{enumerate}[(a)]
			\item  \(m=n\) if and only if \(m_t=n_t\), for all \(t\ge0\).
			\item  If \(m<n\), then there exists \(t\in[0,\ell_p(n)-1]\) such that \(m_t<n_t\).
			\item  \(m\equiv n\modulo{p^\ell}\) if and only if \(m_t=n_t\), for all \(t\in[0,\ell-1]\).
		\end{enumerate}
	\end{observation}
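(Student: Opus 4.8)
The plan is to derive all three parts from the single underlying fact that the base-$p$ representation of a nonnegative integer is unique, together with the observation that $\ell_p(n)$ counts the number of $p$-ary digits of $n$, i.e.\ that $p^{\ell_p(n)-1}\le n\le p^{\ell_p(n)}-1$ forces $n_t=0$ for every $t\ge\ell_p(n)$ (and $n_{\ell_p(n)-1}\ne0$ when $n\ge1$).

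For part (a), the ``if'' direction is immediate by summing the defining expansions: $m=\sum_{t\ge0}m_tp^t=\sum_{t\ge0}n_tp^t=n$. For the ``only if'' direction I would invoke uniqueness of the $p$-ary expansion directly, or spell it out: if $m=n$ but the digit sequences differed, let $s$ be the least index with $m_s\ne n_s$; reducing $\sum_{t\ge s}(m_t-n_t)p^t=0$ modulo $p^{s+1}$ gives $(m_s-n_s)p^s\equiv0\pmod{p^{s+1}}$, so $m_s\equiv n_s\pmod p$, and since $m_s,n_s\in[0,p-1]$ this forces $m_s=n_s$, a contradiction.

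For part (c), note that $m\equiv n\pmod{p^\ell}$ is equivalent to $m\bmod p^\ell=n\bmod p^\ell$, and that $m\bmod p^\ell=\sum_{t=0}^{\ell-1}m_tp^t$ and $n\bmod p^\ell=\sum_{t=0}^{\ell-1}n_tp^t$ are the $p$-ary representations of two integers in $[0,p^\ell-1]$; by the uniqueness used in (a), these truncated sums coincide if and only if $m_t=n_t$ for all $t\in[0,\ell-1]$. (One could also phrase this as applying part (a) to the two integers $m\bmod p^\ell$ and $n\bmod p^\ell$.)

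For part (b), assume $m<n$. Since $n\le p^{\ell_p(n)}-1<p^{\ell_p(n)}$ and $m<n$, both $m$ and $n$ have all digits of index $\ge\ell_p(n)$ equal to $0$. Suppose toward a contradiction that $m_t\ge n_t$ for every $t\in[0,\ell_p(n)-1]$. Then $m=\sum_{t=0}^{\ell_p(n)-1}m_tp^t\ge\sum_{t=0}^{\ell_p(n)-1}n_tp^t=n$, contradicting $m<n$; hence some $t\in[0,\ell_p(n)-1]$ has $m_t<n_t$. I expect part (b) to be the only place requiring a genuine argument rather than a direct appeal to uniqueness, and even there the only mild subtlety is pinning down the digit range via the characterization of $\ell_p(n)$; the rest is a one-line monotonicity contradiction.
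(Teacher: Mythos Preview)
Your proof is correct in all three parts; the only edge case worth noting is \(n=0\) in part (b), but then \(m<n\) is impossible so the claim is vacuous. The paper itself offers no proof of this observation, labeling it ``immediate,'' so your argument simply spells out the uniqueness-of-base-\(p\)-expansion reasoning that the authors leave implicit.
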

	
	\paragraph*{Elementary symmetric polynomials.}  Fix \(n\in\mb{Z}^+\).  For \(k\in[0,n]\), the \tsf{elementary symmetric polynomial} of degree \(k\) is a multilinear polynomial of degree \(k\) defined as
	\[
	\sigma_k(\mb{X})=\sum_{\substack{S\subseteq[n]\\|S|=k}}\prod_{i\in S}X_i\in\mb{F}_p[\mb{X}].
	\]
	It follows immediately from the definition that for any \(k\in[0,n]\) and \(x\in\{0,1\}^n\), we have \(\sigma_k(x)\equiv\binom{|x|}{k}\modulo{p}\).
	
	The following result is crucial for us to work with the elementary symmetric polynomials.
	\begin{theorem}[Lucas's Theorem~\cite{lucas-1878-II}]\label{thm:Lucas}
		For any \(n,m\in\mb{N}\),
		\[
		\binom{n}{m}\equiv\prod_{t\ge0}\binom{n_t}{m_t}\modulo{p}.
		\]
		So \(\binom{n}{m}\ne0\modulo{p}\) if and only if \(m_t\le n_t\) for all \(t\ge0\).
	\end{theorem}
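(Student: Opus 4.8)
The plan is to prove Lucas's Theorem by a coefficient comparison in a single polynomial identity over \(\mb{F}_p\). The one ingredient needed is the \emph{freshman's dream}: for \(k\in[1,p-1]\) the integer \(\binom{p}{k}\) is divisible by \(p\) (since \(p\mid p!\) but \(p\nmid k!\) and \(p\nmid(p-k)!\)), so \((1+X)^p=1+X^p\) in \(\mb{F}_p[X]\), and therefore \((1+X)^{p^t}=1+X^{p^t}\) in \(\mb{F}_p[X]\) for every \(t\ge0\), by an immediate induction on \(t\).

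Using the \(p\)-ary expansion \(n=\sum_{t\ge0}n_tp^t\), I would then write, in \(\mb{F}_p[X]\),
\[
(1+X)^n=\prod_{t\ge0}\left((1+X)^{p^t}\right)^{n_t}=\prod_{t\ge0}\left(1+X^{p^t}\right)^{n_t},
\]
where the products are finite because \(n_t=0\) for all large \(t\). Comparing the coefficient of \(X^m\) on the two sides finishes the proof. On the left it is \(\binom{n}{m}\) taken modulo \(p\). On the right, expanding each factor as \(\left(1+X^{p^t}\right)^{n_t}=\sum_{j=0}^{n_t}\binom{n_t}{j}X^{jp^t}\), a contribution to \(X^m\) comes from each tuple \((j_t)_{t\ge0}\) with \(0\le j_t\le n_t\) and \(\sum_{t\ge0}j_tp^t=m\), and it equals \(\prod_{t\ge0}\binom{n_t}{j_t}\). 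Since \(n_t\le p-1\), each such tuple gives a legitimate \(p\)-ary representation of \(m\), so by the uniqueness of \(p\)-ary expansions (Observation~\ref{obs:p-ary-prop}(a)) the only candidate tuple is \((m_t)_{t\ge0}\): if \(m_t\le n_t\) for every \(t\) this tuple is admissible and the coefficient of \(X^m\) on the right is \(\prod_{t\ge0}\binom{n_t}{m_t}\), while if \(m_t>n_t\) for some \(t\) there is no admissible tuple, the coefficient is \(0\), and this again equals \(\prod_{t\ge0}\binom{n_t}{m_t}\) under the standard convention \(\binom{a}{b}=0\) for \(b>a\). Hence \(\binom{n}{m}\equiv\prod_{t\ge0}\binom{n_t}{m_t}\modulo{p}\).

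For the last sentence of the statement, note that for each \(t\) we have \(0\le m_t,n_t\le p-1\), so \(\binom{n_t}{m_t}\) is a nonnegative integer, it is nonzero exactly when \(m_t\le n_t\), and when nonzero it divides \(n_t!\), which is coprime to \(p\); hence a nonzero \(\binom{n_t}{m_t}\) is itself coprime to \(p\). Consequently \(\prod_{t\ge0}\binom{n_t}{m_t}\) is nonzero modulo \(p\) if and only if every factor is a nonzero integer, i.e.\ if and only if \(m_t\le n_t\) for all \(t\), and combining with the congruence gives \(\binom{n}{m}\not\equiv0\modulo{p}\) iff \(m_t\le n_t\) for all \(t\). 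This argument has no real obstacle; the only point demanding care is the coefficient bookkeeping on the right-hand side, namely checking that the boundary cases \(m_t>n_t\) are absorbed correctly by the vanishing convention so that both the congruence and its ``iff'' reformulation come out correct.
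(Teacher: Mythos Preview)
The paper does not prove Lucas's Theorem; it merely states it with a citation to~\cite{lucas-1878-II} and uses it as a black box. So there is no paper proof to compare against.

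Your argument is the standard and correct one: reduce to \((1+X)^{p^t}=1+X^{p^t}\) in \(\mb{F}_p[X]\) via the freshman's dream, factor \((1+X)^n=\prod_{t\ge0}(1+X^{p^t})^{n_t}\), and read off the coefficient of \(X^m\) using uniqueness of the \(p\)-ary expansion. The bookkeeping is handled carefully, including the boundary case \(m_t>n_t\), and the deduction of the ``iff'' statement from the congruence is clean (your observation that a nonzero \(\binom{n_t}{m_t}\) divides \(n_t!\) and hence is coprime to \(p\) is a nice way to dispatch it). Nothing is missing.
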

	As an immediate corollary, we get some properties of elementary symmetric polynomials.
	\begin{corollary}[Properties of elementary symmetric polynomials]\label{cor:sym-digit}
		
		\phantom{}
		
		\begin{enumerate}[(a)]
			\item  For any \(t\ge0\) and \(x\in\{0,1\}^n\), we have \(\sigma_{p^t}(x)\equiv|x|_t\modulo{p}\).
			
			\item  For any \(x,y\in\{0,1\}^n\), if \(|y|\equiv|x|\modulo{p^t}\) for some \(t\ge0\), then \(\sigma_d(x)\equiv\sigma_d(y)\modulo{p}\), for all \(d\in[0,p^t-1]\).
		\end{enumerate}
	\end{corollary}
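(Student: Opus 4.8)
The plan is to deduce both parts directly from Lucas's Theorem (Theorem~\ref{thm:Lucas}), using the elementary identity, noted just before the statement, that $\sigma_d(x) \equiv \binom{|x|}{d} \modulo{p}$ for every $d \in [0,n]$ and every $x \in \{0,1\}^n$. So in each part the task reduces to an entirely routine computation with $p$-ary digits.

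For part (a), I would begin from $\sigma_{p^t}(x) \equiv \binom{|x|}{p^t} \modulo{p}$ and expand the right-hand side via Lucas's Theorem as $\prod_{s \ge 0}\binom{|x|_s}{(p^t)_s}$. The $p$-ary digits of $p^t$ are all $0$ except for a single $1$ in position $t$, so every factor with $s \ne t$ equals $\binom{|x|_s}{0} = 1$, and the one remaining factor is $\binom{|x|_t}{1} = |x|_t$. Hence $\sigma_{p^t}(x) \equiv |x|_t \modulo{p}$, as claimed.

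For part (b), I would write $\sigma_d(x) \equiv \binom{|x|}{d} \modulo{p}$ and $\sigma_d(y) \equiv \binom{|y|}{d} \modulo{p}$, and again expand both binomial coefficients by Lucas's Theorem. Since $0 \le d \le p^t - 1$, every digit $d_s$ with $s \ge t$ is zero, so those factors contribute $1$ in both products; for each $s \in [0,t-1]$, the hypothesis $|y| \equiv |x| \modulo{p^t}$ gives $|x|_s = |y|_s$ by Observation~\ref{obs:p-ary-prop}(c), so the corresponding factors agree. Therefore $\binom{|x|}{d} \equiv \binom{|y|}{d} \modulo{p}$, and hence $\sigma_d(x) \equiv \sigma_d(y) \modulo{p}$.

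I do not expect any genuine obstacle here: the corollary is pure digit bookkeeping on top of Lucas's Theorem. The only point worth a moment's care is that in part (b) it is precisely the bound $d \le p^t - 1$ (equivalently, $d$ lying in the stated range $[0,p^t-1]$) that forces $d_s = 0$ for all $s \ge t$, which is exactly what makes the high-order digits of $|x|$ and $|y|$ irrelevant.
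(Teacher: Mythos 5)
Your proof is correct and follows the same route as the paper: in both parts it reduces $\sigma_d(x)$ to $\binom{|x|}{d} \bmod p$, applies Lucas's Theorem, and compares digit by digit, using $d \le p^t-1$ to kill the high-order factors in part (b). No meaningful difference in approach.
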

	\begin{proof}
		\begin{enumerate}[(a)]
			\item  By definition, we get
			\begin{align*}
				\sigma_{p^t}(x)&\equiv\binom{|x|}{p^t}\modulo{p}&\\
				&\equiv\binom{\sum_{k\ge0}|x|_kp^k}{\sum_{k\ge0}(p^t)_k}\modulo{p}&\\
				&\equiv\binom{|x|_t}{1}\modulo{p}&\tx{by Theorem~\ref{thm:Lucas}}\\
				&\equiv|x|_t\modulo{p}.&
			\end{align*}
			
			\item  Let \(x,y\in\{0,1\}^n\) such that \(|y|\equiv|x|\modulo{p^t}\).  So we have \(|y|_k=|x|_k\), for all \(k\in[0,t-1]\).  Further, for any \(d\in[0,p^t-1]\), we have \(d_k=0\) for all \(k\ge t\).  So by definition, we get
			\begin{align*}
				\sigma_d(y)&\equiv\binom{|y|}{d}\modulo{p}&&\\
				&\equiv\prod_{k\ge0}\binom{|y|_k}{d_k}\modulo{p}&\tx{by Theorem~\ref{thm:Lucas}}&\\
				&\equiv\prod_{k=0}^{t-1}\binom{|y|_k}{d_k}\modulo{p}&&\\
				&\equiv\prod_{k=0}^{t-1}\binom{|x|_k}{d_k}\modulo{p}&&\\
				&\equiv\binom{|x|}{d}\modulo{p}&\tx{by Theorem~\ref{thm:Lucas}}&\\
				&\equiv\sigma_d(x)\modulo{p}.&&\qedhere
			\end{align*}
		\end{enumerate}
	\end{proof}

\paragraph*{Integer-valued polynomials.}  The integer-valued polynomials\footnote{See for instance, Cahen and Chabert~\cite{cahen-chabert-1997-integer-polynomials} for a detailed account of integer-valued polynomials.} are precisely those polynomials \(P(Z)\in\mb{Q}[Z]\) such that \(P(\mb{Z})\subseteq\mb{Z}\).  For any \(k\in\mb{N}\), consider the \tsf{degree-\(k\) Newton polynomial} defined as \(\binom{Z}{k}=(1/k!)\cdot Z(Z-1)\cdots(Z-k+1)\in\mb{Z}[Z]\).  It is clear that as a function on \(\mb{Z}\), \(\binom{Z}{k}\) is \(\mb{Z}\)-valued, for all \(k\in\mb{N}\).  The following lemma is folklore.  The first mention of this result could be attributed to a letter by James Gregory to John Collins dated November 23, 1670~\cite{turnbull-1959-newton-correspondences}.	(See, for instance, Cahen and Chabert~\cite[Corollary I.1.2]{cahen-chabert-1997-integer-polynomials}.)
	\begin{lemma}[Folklore,~{\cite{turnbull-1959-newton-correspondences},\cite[Corollary I.1.2]{cahen-chabert-1997-integer-polynomials}}]\label{lem:Newton-AW}
		Let \(d\in\mb{N}\) and \(I\subseteq\mb{N}\) be an interval with \(|I|=d+1\).  For any function \(f:I\to\mb{N}\), there exists a unique polynomial \(Q_f(Z)\) that is a \(\mb{Z}\)-linear combination of \(\binom{Z}{0},\ldots,\binom{Z}{d}\) such that \(Q_f=f\).
	\end{lemma}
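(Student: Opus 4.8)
The plan is to split the statement into its two genuinely different assertions and handle them separately: \emph{uniqueness} of $Q_f$, which is soft, and \emph{existence} of $Q_f$ with \emph{integer} coefficients, which is the real content. It is convenient to prove the (formally stronger) version in which $f$ is allowed to be $\mathbb{Z}$-valued: this is what the induction below needs, and it implies the stated lemma since $\mathbb{N}\subseteq\mathbb{Z}$.

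For uniqueness, note that $\binom{Z}{0},\binom{Z}{1},\ldots,\binom{Z}{d}$ have pairwise distinct degrees $0,1,\ldots,d$, hence are linearly independent over $\mathbb{Q}$ and span the space of polynomials of degree at most $d$. If $Q_1$ and $Q_2$ are two $\mathbb{Z}$-linear combinations of them that agree with $f$ on $I$, then $Q_1-Q_2$ has degree at most $d$ and vanishes at the $d+1$ distinct points of $I$, so $Q_1-Q_2=0$ as a polynomial, and by linear independence the two coefficient vectors coincide. (Over $\mathbb{Q}$ this already yields existence and uniqueness of an interpolating polynomial of degree $\le d$ expressed in the Newton basis, by Lagrange interpolation; the only thing left to establish is that its coefficients lie in $\mathbb{Z}$.)

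For existence, I would induct on $d$, writing $I=\{a,a+1,\ldots,a+d\}$ with $a\in\mathbb{N}$. The base case $d=0$ is immediate: take $Q_f=f(a)\binom{Z}{0}$. For the inductive step, pass to the first finite difference: define $g:\{a,\ldots,a+d-1\}\to\mathbb{Z}$ by $g(m)=f(m+1)-f(m)$ (well defined since $m+1\le a+d$). By the induction hypothesis there is a $\mathbb{Z}$-linear combination $R=\sum_{k=0}^{d-1}b_k\binom{Z}{k}$ with $R=g$ on $\{a,\ldots,a+d-1\}$. Now ``antidifference'': using $\binom{Z+1}{k+1}-\binom{Z}{k+1}=\binom{Z}{k}$, put $S=\sum_{k=0}^{d-1}b_k\binom{Z}{k+1}$, so that $S(Z+1)-S(Z)=R(Z)$, and set
\[
Q_f = S + \big(f(a)-S(a)\big)\binom{Z}{0}.
\]
Then $Q_f(Z+1)-Q_f(Z)=R(Z)$ and $Q_f(a)=f(a)$, so by induction on $m$ from $a$ upward one gets $Q_f(m)=f(m)$ for every $m\in I$. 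Finally, $Q_f=\big(f(a)-S(a)\big)\binom{Z}{0}+\sum_{k=1}^{d}b_{k-1}\binom{Z}{k}$: the coefficients $b_{k-1}$ are integers by the induction hypothesis, and $f(a)-S(a)=f(a)-\sum_{k=1}^{d}b_{k-1}\binom{a}{k}$ is an integer precisely because $a\in\mathbb{N}$, so that $\binom{a}{k}\in\mathbb{Z}$.

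The main obstacle — and the reason the hypotheses ``$I\subseteq\mathbb{N}$ is an interval of length exactly $d+1$'' matter — is the integrality of the coefficients: the antidifference step together with the evaluation at the integer endpoint $a$ is what forces $f(a)-S(a)\in\mathbb{Z}$, and the interval structure is what lets the induction on $d$ proceed via the finite-difference function $g$. An alternative making the same point is to invoke Newton's forward-difference formula $Q_f=\sum_{j=0}^{d}(\Delta^j f)(a)\binom{Z-a}{j}$, observe $(\Delta^j f)(a)=\sum_{i=0}^{j}(-1)^{j-i}\binom{j}{i}f(a+i)\in\mathbb{Z}$, and then re-expand each $\binom{Z-a}{j}$ in the basis $\binom{Z}{0},\ldots,\binom{Z}{j}$ via Vandermonde, $\binom{Z-a}{j}=\sum_{i=0}^{j}\binom{-a}{j-i}\binom{Z}{i}$, whose coefficients $\binom{-a}{j-i}=(-1)^{j-i}\binom{a+j-i-1}{j-i}$ are integers; either route reduces the lemma to elementary binomial identities.
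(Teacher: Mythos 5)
Your proof is correct and complete. Note, though, that the paper does not actually prove this lemma: it records it as folklore and points to the references (Gregory's 1670 letter via Turnbull, and Cahen--Chabert, Corollary I.1.2), so there is no in-paper argument to compare against. What you supply is a self-contained finite-difference proof, and it is sound in all the places where such arguments usually go wrong: you correctly strengthen the statement to $\mathbb{Z}$-valued $f$ so that the difference function $g(m)=f(m+1)-f(m)$ stays in the class handled by the induction hypothesis; the antidifference identity $\binom{Z+1}{k+1}-\binom{Z}{k+1}=\binom{Z}{k}$ is the right tool; and you isolate exactly where the hypotheses enter, namely that integrality of the constant coefficient $f(a)-S(a)$ comes from evaluating $S$ at the integer endpoint $a$, where $\binom{a}{k}\in\mathbb{Z}$. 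The uniqueness argument (degree $\le d$, vanishing at $d+1$ points, plus linear independence of the Newton basis) is standard and correct. Your closing alternative via Newton's forward-difference formula and Vandermonde re-expansion of $\binom{Z-a}{j}$ in the basis $\binom{Z}{0},\ldots,\binom{Z}{j}$ is essentially the argument behind the cited Corollary I.1.2 of Cahen--Chabert, so either route would serve as a legitimate replacement for the citation.
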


	Finally, yet another abbreviation.  For any \(a_1,\ldots,a_k\in\{0,1\}\) and \(n_1,\ldots,n_k\in\mb{N}\), we denote the binary vector \(a_1^{n_1}\cdots a_k^{n_k}\coloneqq\big(\underbrace{a_1,\ldots,a_1}_{n_1\tx{ times }},\ldots,\underbrace{a_k,\ldots,a_k}_{n_k\tx{ times}}\big)\in\{0,1\}^{n_1+\cdots+n_k}\).

	\section{Finite-degree Z-closure of a single layer}
	
	In this section, we will prove Theorem~\ref{thm:clo-layer-p}.  Towards this, the following is an easy but important proposition.
	\begin{proposition}\label{pro:layer-obs}
		Let \(d\in[0,n]\) and \(\ell=\ell_p(d)\).
		\begin{enumerate}[(a)]
			\item  If \(i\not\in[d,n-d]\), then \(\zcl_{n,d}(i)=\{i\}\).
			\item  If \(i\in[d,n-d]\), then \(\zcl_{n,d}(i)\subseteq i\oplus p^\ell \).
		\end{enumerate}
	\end{proposition}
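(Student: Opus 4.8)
The plan is to prove both parts by the same elementary device: to show that a layer $j$ does not lie in $\zcl_{n,d}(i)$, I exhibit one polynomial $f(\mb{X})\in\mb{F}_p[\mb{X}]$ of degree at most $d$ that vanishes identically on $\ul{i}$ but not identically on $\ul{j}$; then some point of $\ul{j}$ lies outside $\mc{Z}(\mc{I}_{n,d}(\ul{i}))=\zcl_{n,d}(\ul{i})$. For part (b) it is enough to do this for every $j\notin i\oplus p^\ell$, and for part (a) for every $j\ne i$ (since $i\in\zcl_{n,d}(i)$ always, by the closure property). Two shapes of witness are used.

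The first witness is symmetric. Whenever there is a $p$-ary digit index $t\in[0,\ell-1]$ with $i_t\ne j_t$, take $f(\mb{X})=\sigma_{p^t}(\mb{X})-i_t$. Its degree is $p^t\le p^{\ell-1}\le d$, and by Corollary~\ref{cor:sym-digit}(a) it equals $|x|_t-i_t$ at every $x\in\{0,1\}^n$; hence it is $0$ on $\ul{i}$ and the nonzero constant $j_t-i_t$ on $\ul{j}$ (both digits lie in $[0,p-1]$). By Observation~\ref{obs:p-ary-prop}(c) such a $t$ exists precisely when $j\not\equiv i\modulo{p^\ell}$, and since $i\oplus p^\ell=\{j\in[0,n]:j\equiv i\modulo{p^\ell}\}$ by definition, this already proves part (b).

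For part (a) it suffices to treat the case $i<d$: if instead $i>n-d$ then $n-i<d$, and since Proposition~\ref{pro:clo-prop}(b) shows weight complementation $j\mapsto n-j$ carries $\zcl_{n,d}(\ul{n-i})$ bijectively onto $\zcl_{n,d}(\ul{i})$, the identity $\zcl_{n,d}(n-i)=\{n-i\}$ yields $\zcl_{n,d}(i)=\{i\}$. So assume $i<d$ and fix a target $j\ne i$. If $j<d$, then $i,j$ both lie in $[0,p^\ell-1]$, so they must differ in some digit below $\ell$ (all higher digits being $0$, agreement there would force $i=j$), and the symmetric witness above applies. If $j\ge d$, take instead the monomial $f(\mb{X})=X_1X_2\cdots X_d$, of degree $d$: every point of $\ul{i}$ has only $i<d$ ones, so some $X_r$ with $r\le d$ vanishes on it, whereas $f(1^j0^{n-j})=1$ since $j\ge d$. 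Either way $j\notin\zcl_{n,d}(i)$, so $\zcl_{n,d}(i)=\{i\}$, which is part (a).

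The step that needs a moment's thought is checking that these two crude witness families together reach every target $j\ne i$ in part (a): a $j$ congruent to $i$ modulo $p^\ell$ but different from $i$ cannot be small, because $i\le d-1\le p^\ell-2$ forces $j\ge p^\ell>d$, so it is caught by the monomial. This is also where the hypothesis $i\notin[d,n-d]$ is genuinely used: for $i\in[d,n-d]$ a degree-$d$ monomial need not vanish on all of $\ul{i}$, and indeed no symmetric polynomial of degree at most $d$ separates such an $\ul{i}$ from $\ul{i+p^\ell}$, since on layers it depends only on the weight modulo $p^\ell$ (Corollary~\ref{cor:sym-digit}(b)); the matching lower bound in that regime is the substance of Theorem~\ref{thm:clo-layer-p} and is handled separately. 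The degenerate values $d=0$ (part (a) vacuous, part (b) trivial) and $2d>n$ (where $[d,n-d]=\varnothing$) need no extra treatment and are checked directly.
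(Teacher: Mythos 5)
Your proof is correct and follows essentially the same strategy as the paper: exhibit, for each excluded layer $j$, a degree-at-most-$d$ witness that is either a monomial of the form $X_1\cdots X_m$ (for weights that are "too large") or the symmetric polynomial $\sigma_{p^t}(\mb{X})-i_t$ (when a $p$-ary digit below $\ell$ differs), together with the complementation reduction from Proposition~\ref{pro:clo-prop}(b). The only cosmetic difference is the case split in part (a): the paper divides according to $j>i$ versus $j<i$ and uses the monomial $X_1\cdots X_{i+1}$ in the first case, while you divide according to $j\ge d$ versus $j<d$ and use $X_1\cdots X_d$; both choices are valid and give the same degree bounds.
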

	\begin{proof}
		\begin{enumerate}[(a)]
			\item  Suppose $i<d$.  For any $j>i$, the multilinear polynomial $X_1\cdots X_{i+1}$ vanishes on $\ul{i}$, does not vanish at $1^j0^{n-j}\in\ul{j}$ and has degree at most \(d\).  So $j\not\in\zcl_{n,d}(i)$.  Now consider any $j<i$.  Then there exists $t\in[0,\ell-1]$ such that $j_t<i_t$.  Then $\sigma_{p^t}(X_1,\ldots,X_n)-i_t$ is a multilinear polynomial of degree $p^t\le p^{\ell-1}\le d$, which is zero on $\ul{i}$ and nonzero on $\ul{j}$, since $j_t\ne i_t$.  So $j\not\in\zcl_{n,d}(i)$.  Thus $\zcl_{n,d}(i)=\{i\}$, if $i<d$.  By Proposition~\ref{pro:clo-prop} (b), we then also get $\zcl_{n,d}(i)=\{i\}$, if $i>n-d$.  Hence $\zcl_{n,d}(i)=\{i\}$.
			
			\item  Consider any \(j\not\in i\oplus p^\ell \).  So \(j\not\equiv i\modulo{p^\ell }\).  Then there exists $t\in[0,\ell-1]$ such that $j_t\ne i_t$.  Then $\sigma_{p^t}(X_1,\ldots,X_n)-i_t$ is a multilinear polynomial which is zero on $\ul{i}$ and nonzero on $\ul{j}$, with degree at most $p^t\le p^{\ell-1}\le d$.  So $j\not\in\zcl_{n,d}(i)$.  Thus $\zcl_{n,d}(i)\subseteq i\oplus p^\ell$.\qedhere
		\end{enumerate}
	\end{proof}
	
	We will also need the following technical lemma.
	\begin{lemma}\label{lem:hr}
		Let \(n\in\mb{N},\,i\in[0,n]\), and \(\ell=\ell_p(i)\).
		\begin{enumerate}[(a)]
			\item  There exists \(h_i(\mb{X})\in\mb{F}_p[\mb{X}]\) such that \(\deg h_i\le p^\ell -1\) and
			\[
			h_i(x)=0\quad\iff\quad|x|\not\equiv i\modulo{p^\ell }.
			\]
			
			\item  For any \(j\in i\oplus p^\ell,\,j>i\), there exists \(r_{i,j}(\mb{X})\in\mb{F}_p[\mb{X}]\) such that \(\deg r_{i,j}\le j-i-p^\ell\) and
			\[
			r_{i,j}(x)\begin{cases}
				=0,&|x|\in[i+1,j-1],\,|x|\equiv i\modulo{p^\ell }\\
				\ne0,&|x|=j
			\end{cases}
			\]
		\end{enumerate}
	\end{lemma}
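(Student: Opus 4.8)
The plan is to construct $h_i$ and $r_{i,j}$ explicitly from elementary symmetric polynomials, treating the two parts separately.

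For part (a), I would assemble $h_i$ out of ``digit indicators''. By Corollary~\ref{cor:sym-digit}(a) we have $\sigma_{p^t}(x)\equiv|x|_t\modulo{p}$, and by Observation~\ref{obs:p-ary-prop}(c) the condition $|x|\equiv i\modulo{p^\ell}$ is equivalent to $|x|_t=i_t$ for every $t\in[0,\ell-1]$. Over $\mb{F}_p$, Fermat's little theorem makes $1-(Z-i_t)^{p-1}$ the indicator of $\{Z=i_t\}$. So I would set
\[
h_i(\mb{X})=\prod_{t=0}^{\ell-1}\Bigl(1-(\sigma_{p^t}(\mb{X})-i_t)^{p-1}\Bigr),
\]
whose value is nonzero exactly when $\sigma_{p^t}(x)=i_t$ for all $t\in[0,\ell-1]$, i.e.\ exactly when $|x|\equiv i\modulo{p^\ell}$; its degree is $\sum_{t=0}^{\ell-1}(p-1)p^t=p^\ell-1$, as required. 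One only needs to note that $p^t\le p^{\ell-1}\le i\le n$ so each $\sigma_{p^t}$ is defined, and that $i=0$ gives the (consistent) empty product $h_0=1$.

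For part (b), write $j-i=kp^\ell$ with $k\ge1$ an integer --- legitimate since $j\in i\oplus p^\ell$ and $j>i$ --- so that $j-i-p^\ell=(k-1)p^\ell$ and the weights in $[i+1,j-1]$ that are $\equiv i\modulo{p^\ell}$ are exactly $i+p^\ell,\dots,i+(k-1)p^\ell$. The key observation is that the integer-valued polynomial
\[
Q(Z)=\binom{Z-i-p^\ell}{(k-1)p^\ell}\in\mb{Q}[Z],
\]
of degree $(k-1)p^\ell$, already ``detects weight $\ge j$'' among weights $\equiv i\modulo{p^\ell}$. Since $Q$ is integer-valued of degree $(k-1)p^\ell$, the theory of integer-valued polynomials (e.g.\ Lemma~\ref{lem:Newton-AW} applied on the interval $\{i+p^\ell,\dots,i+p^\ell+(k-1)p^\ell\}$, on which $Q$ takes values in $\{0,1\}$) lets us write $Q(Z)=\sum_{t=0}^{(k-1)p^\ell}c_t\binom{Z}{t}$ with $c_t\in\mb{Z}$. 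I would then define $r_{i,j}(\mb{X})=\sum_{t=0}^{(k-1)p^\ell}c_t\,\sigma_t(\mb{X})$; since $\sigma_t(x)\equiv\binom{|x|}{t}\modulo{p}$ this gives $r_{i,j}(x)\equiv Q(|x|)\modulo{p}$ for all $x$, and $\deg r_{i,j}\le(k-1)p^\ell=j-i-p^\ell$ (note $(k-1)p^\ell<kp^\ell\le j\le n$, so all these $\sigma_t$ exist). It then remains to evaluate on the relevant layers: at $|x|=i+mp^\ell$ with $1\le m\le k$ we get $r_{i,j}(x)\equiv\binom{(m-1)p^\ell}{(k-1)p^\ell}\modulo{p}$, and since multiplying by $p^\ell$ shifts $p$-ary expansions up by $\ell$ positions, Lucas's Theorem (Theorem~\ref{thm:Lucas}) collapses this to $\binom{m-1}{k-1}\modulo{p}$, which is $0$ for $m\le k-1$ (as $m-1<k-1$) and $1$ for $m=k$. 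Hence $r_{i,j}$ vanishes on $\ul{i+p^\ell},\dots,\ul{i+(k-1)p^\ell}$ --- exactly the layers $|x|\in[i+1,j-1]$ with $|x|\equiv i\modulo{p^\ell}$ --- and is nonzero on $\ul{j}$; the case $k=1$ degenerates to $Q=1$, $r_{i,j}=1$, with no constraints of the first type.

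I expect the only real obstacle to be locating the polynomial in (b) with the stated degree bound. The naive approach of taking a product of per-layer factors such as $\sigma_{p^{\ell+s}}(\mb{X})-m_s$ (which vanishes on $\ul{i+mp^\ell}$) badly overshoots, since killing a high $p$-ary digit costs degree $p^{\ell+s}$; the point is that the single binomial $\binom{Z-i-p^\ell}{(k-1)p^\ell}$ handles all of $m=1,\dots,k-1$ simultaneously at the optimal degree, with Lucas's Theorem turning a binomial on multiples of $p^\ell$ into a binomial on the quotients, and that this function is realizable on the cube through the $\sigma_t$ precisely because of the integer-valued polynomial machinery already recorded in the preliminaries.
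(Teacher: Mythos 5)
Your part~(a) is identical to the paper's: the same digit-indicator product $h_i(\mb{X})=\prod_{t=0}^{\ell-1}\bigl(1-(\sigma_{p^t}(\mb{X})-i_t)^{p-1}\bigr)$.

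Part~(b) is correct but packaged differently from the paper, and the difference is worth noting. Both proofs rest on the same two ingredients, namely Newton-polynomial expansion (Lemma~\ref{lem:Newton-AW}) and Lucas's Theorem, but they apply Lucas at different stages. The paper introduces the rescaled variable $|x|'=\sum_{t=\ell}^{m-1}|x|_tp^{t-\ell}$, proves via Lucas the identity $\sigma_{p^\ell u}(x)\equiv\binom{|x|'}{u}\pmod p$ for $u\in[0,k-1]$, then interpolates a degree-$(k-1)$ polynomial $Q(Z')$ (with $Q(z)=0$ on $[1,k-1]$, $Q(k)=1$) and sets $r_{i,j}=\sum_{u=0}^{k-1}c_u\sigma_{p^\ell u}$, which evaluates to $Q(|x|')$. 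You instead work directly in the original variable $Z=|x|$: you take the explicit degree-$(k-1)p^\ell$ polynomial $Q(Z)=\binom{Z-i-p^\ell}{(k-1)p^\ell}$, expand it as $\sum_t c_t\binom{Z}{t}$, set $r_{i,j}=\sum_t c_t\sigma_t$ (so $r_{i,j}(x)\equiv Q(|x|)\pmod p$), and only then invoke Lucas to collapse $\binom{(m-1)p^\ell}{(k-1)p^\ell}\equiv\binom{m-1}{k-1}\pmod p$ at the evaluation step. Your construction avoids the paper's auxiliary ``upper-digit'' weight and the slightly delicate chain of Lucas manipulations needed to identify $\sigma_{p^\ell u}(x)$ with $\binom{|x|'}{u}$; the cost is that your $r_{i,j}$ a priori involves all $\sigma_t$ for $t\le(k-1)p^\ell$ rather than only those with index a multiple of $p^\ell$, though this does not affect the degree bound. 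The explicit choice $Q(Z)=\binom{Z-i-p^\ell}{(k-1)p^\ell}$ also replaces the paper's interpolation-existence appeal with a concrete formula, which many readers will find cleaner. Both arguments are sound and give the same degree bound $j-i-p^\ell$.
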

	Let us first assume Lemma~\ref{lem:hr} and prove Theorem~\ref{thm:clo-layer-p}.  We will also need a result characterizing the duals of Reed-Muller codes over the Boolean cube.
	
	The duals of Reed-Muller codes over \(\mb{F}_q^n\) (where \(q\) is a power of \(p\)) were determined by Delsarte, Goethals, and MacWilliams~\cite{delsarte-goethals-macwilliams-1970-generalized-RM}, who remark that it could be readily obtained from Kasami, Lin, and Peterson~\cite{kasami-lin-peterson-1968-generalization-RM}, and is also mentioned in some unpublished notes of Lin~\cite{lin-unpublished-notes-GRM}.  Beelen and Datta~\cite{beelen-datta-2018-GHM-RM}, using a different argument, described these duals more generally over finite grids in \(\mb{F}_q^n\).  The characterization of these duals over the Boolean cube could be obtained by the proofs in either of the above works; in fact, it is a special case of~\cite[Theorem 5.7]{beelen-datta-2018-GHM-RM}.
	
	For \(d\in[0,n]\), the \tsf{Reed-Muller code} (over the Boolean cube \(\{0,1\}^n\subseteq\mb{F}_p^n\)) with degree parameter \(d\) is defined as
	\[
	\RM_p(n,d)=\Big\{P\coloneqq\begin{bmatrix}
		P(a)
	\end{bmatrix}_{a\in\{0,1\}^n}:P(\mb{X})\in\spn\{\mb{X}^\alpha:\alpha\in\{0,1\}^n\},\,\deg P\le d\Big\}.
	\]
	\begin{theorem}[Dual of Reed-Muller code~{\cite{delsarte-goethals-macwilliams-1970-generalized-RM,kasami-lin-peterson-1968-generalization-RM,lin-unpublished-notes-GRM},\cite[Theorem 5.7]{beelen-datta-2018-GHM-RM}}]\label{thm:RM-usual}
		\phantom{}
		
		For any \(d\in[0,n]\),
		\begin{align*}
			\RM_p(n,d)^\perp&=\Big\{\begin{bmatrix}
				(-1)^{|a|}Q(a)
			\end{bmatrix}_{a\in\{0,1\}^n}:Q(\mb{X})\in\spn\{\mb{X}^\alpha:\alpha\in\{0,1\}^n\},\,\deg Q\le n-d-1\Big\}\\
			&=\Big\{\diag\big((-1)^{|a|}:a\in\{0,1\}^n\big)\cdot\begin{bmatrix}
				Q(a)
			\end{bmatrix}_{a\in\{0,1\}^n}:Q\in\RM_p(n,n-d-1)\Big\}.
		\end{align*}
	\end{theorem}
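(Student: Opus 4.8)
The plan is to combine a dimension count with a short orthogonality computation. Write $V$ for the set on the right-hand side of the first displayed equality, and note at the outset that $V=\diag\big((-1)^{|a|}:a\in\{0,1\}^n\big)\cdot\RM_p(n,n-d-1)$, so that the second equality follows once the first is established. First I would pin down dimensions. By Theorem~\ref{thm:CN}, the squarefree monomials $\mb{X}^\alpha$, $\alpha\in\{0,1\}^n$, are linearly independent as functions on $\{0,1\}^n$; hence the evaluation map is injective on $\spn\{\mb{X}^\alpha:|\alpha|\le e\}$, and $\dim\RM_p(n,e)=\sum_{i=0}^{e}\binom{n}{i}$ for all $e\in[0,n]$. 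Since the standard bilinear form on $\mb{F}_p^{2^n}$ is non-degenerate, $\dim\RM_p(n,d)^\perp=2^n-\sum_{i=0}^{d}\binom{n}{i}=\sum_{j=0}^{n-d-1}\binom{n}{j}$; and because the diagonal matrix above is invertible, $\dim V=\dim\RM_p(n,n-d-1)=\sum_{j=0}^{n-d-1}\binom{n}{j}$ as well. So it suffices to prove the containment $V\subseteq\RM_p(n,d)^\perp$.

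For that containment, fix $P(\mb{X})$ with $\deg P\le d$ and $Q(\mb{X})$ with $\deg Q\le n-d-1$, and check that $\sum_{a\in\{0,1\}^n}P(a)\,(-1)^{|a|}\,Q(a)=0$ in $\mb{F}_p$. As a function on the cube, $P\cdot Q$ agrees with a multilinear polynomial of degree at most $d+(n-d-1)=n-1$ (replacing each $X_i^2$ by $X_i$ only lowers the degree). Thus it is enough to verify, for every squarefree monomial $\mb{X}^\alpha$ with $T\coloneqq\supp(\alpha)\subsetneq[n]$, that $\sum_{a\in\{0,1\}^n}(-1)^{|a|}\mb{X}^\alpha(a)=0$. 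Restricting the sum to those $a$ with $a_i=1$ for all $i\in T$ and pulling out $(-1)^{|T|}$, this collapses to $\sum_{b\in\{0,1\}^{[n]\setminus T}}(-1)^{|b|}=(1-1)^{n-|T|}=0$, which holds since $[n]\setminus T\ne\emptyset$.

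Putting the two steps together, $V$ is a subspace of $\RM_p(n,d)^\perp$ of the same dimension, hence equals it, and the second displayed equality follows from $[(-1)^{|a|}Q(a)]_a=\diag\big((-1)^{|a|}\big)\cdot[Q(a)]_a$ as $Q$ runs over multilinear polynomials of degree at most $n-d-1$. I do not expect a genuine obstacle here: the one place that needs care is the degree bookkeeping that keeps $P\cdot Q$ of degree strictly below $n$, which is exactly what excludes the monomial $X_1\cdots X_n$ — the unique squarefree monomial whose alternating sum over the cube fails to vanish.
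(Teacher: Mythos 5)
Your proof is correct. Note first that the paper does not prove Theorem~\ref{thm:RM-usual}: it is quoted from the literature (Delsarte--Goethals--MacWilliams, Kasami--Lin--Peterson, and Beelen--Datta, the last of which treats duals of Reed--Muller-type codes over general finite grids). Your argument is a clean, self-contained derivation of the special case actually used, namely evaluation on the Boolean cube over $\mb{F}_p$.

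The two ingredients you use are exactly the right ones for this special case. The dimension count is justified by Theorem~\ref{thm:CN} (linear independence of squarefree monomials), which gives $\dim\RM_p(n,e)=\sum_{i\le e}\binom{n}{i}$, and the identity $\sum_{i=0}^{d}\binom{n}{i}+\sum_{j=0}^{n-d-1}\binom{n}{j}=2^n$ plus non-degeneracy of the dot product reduces everything to the containment $V\subseteq\RM_p(n,d)^\perp$. The orthogonality step is also sound: after multilinearizing, $P\cdot Q$ has degree at most $n-1$ on the cube, and your computation
\[
\sum_{a\in\{0,1\}^n}(-1)^{|a|}\,\mb{X}^\alpha(a)=(-1)^{|T|}(1-1)^{\,n-|T|}=0\qquad\text{for }T=\supp(\alpha)\subsetneq[n]
\]
isolates precisely the monomial $X_1\cdots X_n$ as the unique obstruction, which is excluded by the degree bound. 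This is the standard binary-RM duality argument transplanted verbatim to $\mb{F}_p$ on $\{0,1\}^n$; the cited references obtain the same conclusion as a corollary of more general grid or $q$-ary machinery, whereas your version is elementary and directly tailored to the cube.
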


	 For any \(v\in\mb{F}_p^n\), define \(\supp(v)=\{i\in[n]:v_i\ne0\}\).  Similarly, for any function \(f:A\to\mb{F}_p\) (where \(A\) is some set), define \(\supp(f)=\{a\in A:f(a)\ne0\}\).
	 \begin{remark}\label{rem:GRM-RM-support}
	 	Note that for any \(\lambda_i\in\mb{F}_p\setminus\{0\},\,i\in[n]\), and \((v_1,\ldots,v_n)\in\mb{F}_p^n\), we have \(\supp(v_1,\ldots,v_n)=\supp(\lambda_1v_1,\ldots,\lambda_nv_n)\).
	 \end{remark}
	
	Consider the standard \emph{dot product} for vectors in \(\mb{F}_p^n\):  \(v\cdot w=\sum_{i\in[n]}v_iw_i\) for \(v,w\in\mb{F}_p^n\).  The following is a standard fact from linear algebra, which follows from the properties of duals of linear subspaces with respect to the dot product.\footnote{Note that in characteristic \(p\), the dot product is not an \emph{inner product}.  In fact, it is not even nondegenerate; there exist \(v\in\mb{F}_p^n,\,v\ne0\) such that \(v\cdot v=0\).  But nevertheless, it is a bilinear form, with respect to which we can consider dual subspaces.}
	\begin{fact}\label{fact:dual-code}
		Let \(W\subseteq\mb{F}_p^n\) be a linear subspace, and \(S\subseteq[n],\,j\in[n]\).  The following are equivalent.
		\begin{itemize}
			\item  For any \(w\in W\), if \(w_i=0\) for all \(i\in S\), then \(w_j=0\).
			\item  There exists \(v\in W^\perp\) such that \(j\in\supp(v)\subseteq\{j\}\cup S\).
		\end{itemize}
	\end{fact}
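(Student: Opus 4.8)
The plan is to rewrite both bullets in terms of the coordinate subspace $U = \spn\{e_i : i \in S\} \subseteq \mb{F}_p^n$, where $e_1, \ldots, e_n$ denotes the standard basis, and then move between them using the elementary theory of orthogonal complements with respect to the dot product. We assume $j \notin S$ throughout; if $j \in S$ the first bullet is vacuously true and this is not the interesting case. Observe that ``$w_i = 0$ for all $i \in S$'' is precisely the condition $w \in U^{\perp}$ (indeed $U^{\perp} = \spn\{e_i : i \notin S\}$ because $e_i \cdot e_k = \delta_{ik}$), while ``$w_j = 0$'' is precisely $w \in \{e_j\}^{\perp}$. Hence the first bullet asserts exactly
\[
W \cap U^{\perp} \subseteq \{e_j\}^{\perp}.
\]

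First I would pass to orthogonal complements. For the dot product on $\mb{F}_p^n$ one has $(A^{\perp})^{\perp} = A$ and $\dim A + \dim A^{\perp} = n$ for every subspace $A$, and then the always-valid identity $(A+B)^{\perp} = A^{\perp} \cap B^{\perp}$ gives $(A \cap B)^{\perp} = A^{\perp} + B^{\perp}$ by a dimension count. Since $(\cdot)^{\perp}$ reverses inclusions and is an involution, the displayed inclusion is equivalent to $\spn\{e_j\} = (\{e_j\}^{\perp})^{\perp} \subseteq (W \cap U^{\perp})^{\perp} = W^{\perp} + U$, that is, to $e_j \in W^{\perp} + U$. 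I would then unpack this last statement: if $e_j = v + u$ with $v \in W^{\perp}$ and $u \in U$, then $v = e_j - u$, so $\supp(v) \subseteq \{j\} \cup \supp(u) \subseteq \{j\} \cup S$ and $v_j = 1 \neq 0$ (here $j \notin S$ is used), which is the second bullet; conversely, given $v \in W^{\perp}$ with $j \in \supp(v) \subseteq \{j\} \cup S$, rescale $v$ --- legitimate since $v_j \in \mb{F}_p \setminus \{0\}$ and $W^{\perp}$ is a subspace --- so that $v_j = 1$; then $v - e_j$ is supported on $S$, hence lies in $U$, and $e_j = v - (v - e_j) \in W^{\perp} + U$. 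Chaining these equivalences proves the Fact.

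The only step with any real content is the complement identity $(A \cap B)^{\perp} = A^{\perp} + B^{\perp}$, and the one subtlety is that the dot product over $\mb{F}_p$, although not anisotropic, is nondegenerate in the sense actually needed: $v \mapsto v \cdot (\cdot)$ is injective (test it on each $e_i$), which is exactly what yields $(A^{\perp})^{\perp} = A$ and $\dim A + \dim A^{\perp} = n$. The main point to be careful about, I expect, is precisely this --- that the apparent degeneracy of the form over $\mb{F}_p$ causes no trouble --- and everything else is bookkeeping. As a complement-free alternative, the ``$\Leftarrow$'' direction is immediate: if $v \in W^{\perp}$, $j \in \supp(v) \subseteq \{j\} \cup S$, and $w \in W$ has $w_i = 0$ for all $i \in S$, then $0 = v \cdot w = v_j w_j$ forces $w_j = 0$. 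For ``$\Rightarrow$'', the hypothesis says the linear functional $w \mapsto w_j$ on $W$ vanishes on the kernel of $w \mapsto (w_i)_{i \in S}$, hence factors through it as $w \mapsto \sum_{i \in S} a_i w_i$ for suitable $a_i \in \mb{F}_p$, so that $v := e_j - \sum_{i \in S} a_i e_i$ lies in $W^{\perp}$ and does the job.
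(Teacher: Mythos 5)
Your proof is correct. The paper states this Fact without proof (calling it ``a standard fact from linear algebra, which follows from the properties of duals of linear subspaces with respect to the dot product''), so there is no in-paper argument to compare against; your two parallel derivations — the orthogonal-complement computation reducing the first bullet to $e_j\in W^\perp+U$ with $U=\spn\{e_i:i\in S\}$, and the shorter argument that factors the functional $w\mapsto w_j$ on $W$ through $w\mapsto(w_i)_{i\in S}$ — are both sound and exactly the kind of bookkeeping the paper is implicitly relying on. Two small points worth flagging. First, your assumption $j\notin S$ is not merely a simplification: the Fact as literally stated is \emph{false} when $j\in S$ (take $W=\mb{F}_p^n$ and $S=\{j\}$; the first bullet then holds for trivial reasons, yet $W^\perp=\{0\}$, so no $v$ as in the second bullet exists), so $j\notin S$ is a genuinely needed hypothesis, and it does hold at both places the paper invokes the Fact. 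Describing the $j\in S$ case as ``the first bullet is vacuously true'' is slightly off — the implication is trivially true because its conclusion is among its hypotheses, not vacuously so — but the substantive observation is that the equivalence actually breaks there, so this is not just ``not the interesting case.'' Second, you are right to insist that the dot product on $\mb{F}_p^n$ is nondegenerate in the operative sense, namely that $v\mapsto v\cdot{(-)}$ is injective and hence an isomorphism onto the dual, which is what yields $(A^\perp)^\perp=A$, $\dim A+\dim A^\perp=n$, and thus $(A\cap B)^\perp=A^\perp+B^\perp$. The paper's footnote to this Fact asserts the form ``is not even nondegenerate'' on the grounds that $v\cdot v$ can vanish for $v\neq 0$; that conflates nondegeneracy with anisotropy (or definiteness), and your phrasing gets the distinction right.
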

	
	We are now ready to prove Theorem~\ref{thm:clo-layer-p}.
	\begin{proof}[Proof of Theorem~\ref{thm:clo-layer-p}]
		If \(i\not\in[d,n-d]\), then we are done by Proposition~\ref{pro:layer-obs} (a).
		
		Now suppose $i\in[d,n-d]$.  By Proposition~\ref{pro:layer-obs} (b), we know that \(\zcl_{n,d}(i)\subseteq i\oplus p^\ell \).  We now prove that \(i\oplus p^\ell \subseteq\zcl_{n,d}(i)\).  By Proposition~\ref{pro:clo-prop} (b), it is enough to prove that $j\in\zcl_{n,d}(i)$, for all $j\in i\oplus p^\ell ,\,j>i$.	 Further, by Proposition~\ref{pro:clo-prop} (c), it is enough to consider $i=d,\,j=d+p^\ell k$.  Furthermore, by Proposition~\ref{pro:clo-prop} (a), it is enough to consider \(i=d,\,n=j=d+p^\ell k\).  Therefore, we need to prove that \(n\in\zcl_{n,d}(d)\).
		
		Consider \(h_d(\mb{X}),\,r_{d,n}(\mb{X})\in\mb{F}_p[\mb{X}]\) as given by Lemma~\ref{lem:hr}.  So we have
		\begin{align}
			&h_d(x)=0\quad\iff\quad|x|\not\equiv d\modulo{p^\ell}\label{eq:h}\\
			\tx{and}\quad&r_{d,n}(x)\begin{cases}
				=0,&|x|\in[d+1,n-1],\,|x|\equiv d\modulo{p^\ell}\\
				\ne0,&x=1^n
			\end{cases}\label{eq:r}
		\end{align}
		and further, \(\deg h_d\le p^\ell-1\) and \(\deg r_{d,n}\le n-d-p^\ell\).  We now give two (essentially equivalent) arguments.
		\begin{enumerate}[(a)]
			\item  We first note the following sequence of equivalences.
			\begin{align*}
				&1^n\in\zcl_{n,d}(\ul{d})&&\\
				\iff&\not\exists f\in \RM_p(n,d):f(1^n)\ne0,\,f|_{\ul{d}}=0&&\\
				\iff&\exists\,g\in \RM_p(n,d)^\perp:1^n\in\supp(g)\subseteq\{1^n\}\cup\ul{d}&\tx{by Fact~\ref{fact:dual-code}}&\\
				\iff&\exists\,g\in \RM_p(n,n-d-1):1^n\in\supp(g)\subseteq\{1^n\}\cup\ul{d}&\qquad\tx{by Theorem~\ref{thm:RM-usual} and Remark~\ref{rem:GRM-RM-support}}&
			\end{align*}
			Now define \(g(\mb{X})=h_d(\mb{X})r_{d,n}(\mb{X})\).  Then $\deg g=\deg h_d+\deg r_{d,n}\le(p^\ell-1)+(n-d-p^\ell)=n-d-1$.  So \(g\in\RM_p(n,n-d-1)\).  Further, by (\ref{eq:h}) and (\ref{eq:r}), we have \(1^n\in\supp(g)\subseteq\{1^n\}\cup\ul{d}\).  Thus \(n\in\zcl_{n,d}(d)\).
			
			\item  Suppose \(n\not\in\zcl_{n,d}(d)\).  Let \(P(\mb{X})\in\mb{F}_p[\mb{X}]\) be a polynomial such that \(\deg P\le d,\,P|_{\ul{d}}=0\) and \(P(1^n)\ne0\).  Define \(Q(\mb{X})=P(\mb{X})h_d(\mb{X})r_{d,n}(\mb{X})\).  Then \(Q(x)\ne0\) if and only if \(x=1^n\).  So by the Combinatorial Nullstellensatz (Theorem~\ref{thm:CN}),
			\[
			Q(\mb{X})=\alpha\prod_{t\in[n]}X_t,\quad\tx{for some }\alpha\ne0,
			\]
			and so \(\deg Q=n\).  But
			\[
			\deg Q\le\deg P+\deg h_d+\deg r_{d,n}\le d+(p^\ell-1)+(n-d-p^\ell)=n-1,
			\]
			a contradiction.  Thus \(n\in\zcl_{n,d}(d)\).\qedhere
		\end{enumerate}
	\end{proof}

	\subsection{Proof of Lemma~\ref{lem:hr}}
	
	Finally, we close this section by proving Lemma~\ref{lem:hr}.
	\begin{proof}[Proof of Lemma~\ref{lem:hr}]
		\begin{enumerate}[(a)]
			\item  Let \(i\in[0,n]\) and \(\ell=\ell_p(i)\).  Define
			\[
			h_i(\mb{X})=\prod_{t=0}^{\ell-1}(1-(\sigma_{p^t}(x)-i_t)^{p-1})\in\mb{F}_p[\mb{X}].
			\]
			Then \(\deg h_i=(p-1)\sum_{t=0}^{\ell-1}p^t=p^\ell -1\).  Further, it is clear from the definition that \(h_i(x)=0\) if and only if \(|x|\not\equiv i\modulo{p^\ell }\).
			
			\item  Let \(j\in i\oplus p^\ell,\,j>i\), and so \(j=i+p^\ell k\), for some \(k\in\mb{Z}^+\).  Let \(I=\{i,i+p^\ell,\ldots,i+p^\ell k=j\}\).  Also, let \(m=\ell_p(j)\).  Then we have, for every \(r\in I\),
			\begin{align}
				&r_t\begin{cases}
					=i_t,&t\in[0,\ell-1]\\
					\in[0,p-1],&t\in[\ell,m-1]\\
					=0,&t\ge m
				\end{cases}\label{eq:hat-r}
			\end{align}
		
			By Lemma~\ref{lem:Newton-AW}, there exists a unique polynomial \(Q(Z)\) which is a \(\mb{Z}\)-linear combination of \(\binom{Z}{0},\ldots,\binom{Z}{k-1}\) such that \(Q(z)=0\) for all \(z\in[1,k-1]\), and \(Q(k)=1\).  For any \(x\in\ul{I}\), let \(|x|'\coloneqq\sum_{t=\ell}^{m-1}|x|_tp^{t-\ell}\); then we have \(|x|'_t=|x|_{\ell+t},\,t\in[0,m-\ell-1]\).  Now for any \(u\in[0,k-1]\) and \(x\in\ul{I}\), we have
			\begin{align*}
			\binom{|x|'}{u}&\equiv\prod_{t=0}^{m-\ell-1}\binom{|x|'_t}{u_t}\modulo{p}&\tx{by Theorem~\ref{thm:Lucas}}&\\
			&\equiv\prod_{t=0}^{m-\ell-1}\binom{|x|_{\ell+t}}{(p^\ell u)_{\ell+t}}\modulo{p}&\tx{since }u_t=(p^\ell u)_{\ell+t},\,t\in[0,m-\ell-1]&\\
			&\equiv\prod_{t=0}^{\ell-1}\binom{|x|_t}{(p^\ell u)_t}\cdot\prod_{t=\ell}^{m-1}\binom{|x|_t}{(p^\ell u)_t}\modulo{p}&\tx{since }(p^\ell u)_t=0,\,t\in[0,\ell-1]&\\
			&\equiv\binom{|x|}{p^\ell u}\modulo{p}&\tx{by Theorem~\ref{thm:Lucas}}&\\
			&\equiv\sigma_{p^\ell u}(x)\modulo{p}.
			\end{align*}
			Suppose \(Q(Z)=\sum_{u=0}^{k-1}c_u\binom{Z}{u}\), where \(c_0,\ldots,c_{k-1}\in\mb{Z}\).  Define \(r_{i,j}(\mb{X})=\sum_{u=0}^{k-1}c_u\sigma_{p^\ell u}(\mb{X})\in\mb{F}_p[\mb{X}]\).  Clearly, \(\deg r_{i,j}=p^\ell(k-1)=j-i-p^\ell\).
			
			Note that for any \(x\in\ul{I}\) and \(v\in[0,k]\), we have \(|x|=i+p^\ell v\) if and only if \(|x|'=v\).  So, for any \(x\in\ul{I}\), we have
			\[
			r_{i,j}(x)=\sum_{u=0}^{k-1}c_u\sigma_{p^\ell u}(x)=\sum_{u=0}^{k-1}c_u\binom{|x|'}{u}=Q(|x|')=\begin{cases}
				0,&|x|\in[i+1,j-1]\\
				1,&|x|=j
			\end{cases}
			\]
			This completes the proof.\qedhere
		\end{enumerate}
	\end{proof}

	\section{Finite-degree Z-closures of arbitrary symmetric sets}
	
	In this section, we will proceed to show that the finite-degree Z-closures of symmetric sets are equal to the symmetric closures, under some conditions.  This would imply that these Z-closures can be computed in polynomial time.  However, a \emph{more explicit} description of the finite-degree Z-closures of arbitrary symmetric sets \`a la Theorem~\ref{thm:clo-layer-p} is still in want.
	
	\subsection{Some observations, a conjecture, and the main theorem}
	
	It is interesting to note that, in characteristic zero (that is, over the field \(\mb{R}\)), for any symmetric set \(E\subseteq[0,n]\), the event \(j\not\in\zcl_{n,d}(E)\) is \emph{witnessed} by a polynomial \(P(\mb{X})\in\mb{R}[\mb{X}]\) having a special form.  This is immediate from the characterization of finite-degree Z-closures of symmetric sets, over \(\mb{R}\), given by the second author~\cite{venkitesh-2021-covering-symmetric-sets}.  Let \(\XOR=\{X_i-X_j:i,j\in[n],\,i\ne j\}\).
	\begin{observation}[Follows from~\cite{venkitesh-2021-covering-symmetric-sets}]\label{obs:layer-real}
		Let \(d\in[0,n]\) and \(E\subseteq[0,n]\).  Over the reals, if \(j\not\in\zcl_{n,d}(E)\), then there exists a polynomial \(P(\mb{X})=\ell_1(\mb{X})\cdots\ell_k(\mb{X})\sigma(\mb{X})\in\mb{R}[\mb{X}]\), where \(\ell_1,\ldots,\ell_k\in\XOR\) and \(\sigma\) is a symmetric polynomial, such that \(\deg P\le d\), \(P|_{\ul{i}}=0\) and \(P|_{\ul{j}}\ne0\).
	\end{observation}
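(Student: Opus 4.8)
The plan is to deduce the statement from the explicit combinatorial characterization of \(\zcl_{n,d}(E)\) over \(\mb{R}\) proved by the second author in~\cite{venkitesh-2021-covering-symmetric-sets}. That work describes, for every degree parameter \(d\) and every symmetric set \(\ul{E}\), the set \(\zcl_{n,d}(E)\) by a purely combinatorial rule; the point to isolate is that its proof of the non-membership direction, ``\(j\notin\zcl_{n,d}(E)\)'', proceeds by exhibiting a certifying polynomial that is, by construction, a product of \emph{level forms} \(\sigma_1(\mb{X})-i\) (over the layers of \(E\) handled ``symmetrically'') times a product of \emph{\(\XOR\)-forms} \(X_a-X_b\) (over a family of hyperplanes \(X_a=X_b\) covering the remaining layers of \(\ul{E}\)), with the degree budget \(\le d\). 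Since the product of the level forms \(\prod_i(\sigma_1(\mb{X})-i)\) is itself a symmetric polynomial, this certifier already has the claimed shape \(P=\ell_1\cdots\ell_k\sigma\) with \(\ell_1,\dots,\ell_k\in\XOR\) and \(\sigma\) symmetric. So, given \(j\notin\zcl_{n,d}(E)\), I would invoke the characterization to obtain the combinatorial certificate, read off the corresponding \(P\), and then check \(\deg P\le d\), \(P|_{\ul{E}}=0\), and \(P|_{\ul{j}}\ne0\) directly from the construction. The only genuine work here is bookkeeping: confirming that the ``hyperplane'' factors appearing in~\cite{venkitesh-2021-covering-symmetric-sets} are exactly forms in \(\XOR\) together with level forms, and that the degrees sum to at most \(d\) (not \(d+1\)).

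For a more self-contained route, I would try to reduce an arbitrary witness to the product form by hand. Fix \(y\) with \(|y|=j\) at which some degree-\(\le d\) witness \(Q\) (with \(Q|_{\ul{E}}=0\), \(Q(y)\ne0\)) does not vanish, and after permuting coordinates take \(y=1^j0^{n-j}\), so \(\mathrm{Stab}(y)=\mf{S}_{[j]}\times\mf{S}_{[j+1,n]}\). Averaging \(Q\) over \(\mathrm{Stab}(y)\) yields a \(\mathrm{Stab}(y)\)-invariant polynomial \(Q_1\) with \(\deg Q_1\le d\); it still satisfies \(Q_1|_{\ul{E}}=0\) because \(\ul{E}\) is \(\mf{S}_n\)-invariant, and \(Q_1(y)=Q(y)\ne0\). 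This handles the symmetry but \emph{not} the product structure: \(Q_1\) need not be of the form \(\ell_1\cdots\ell_k\sigma\) (for instance \(X_1+\dots+X_j\) is \(\mathrm{Stab}(y)\)-invariant of degree \(1\) but is not such a product). So the remaining step is structural: one must argue that the layers of \(E\) split into those that a product of \(\XOR\)-forms can remove cheaply and those that must be removed by a symmetric factor, and assemble \(P\) accordingly. The relevant heuristic (which I expect matches the accounting in~\cite{venkitesh-2021-covering-symmetric-sets}) is that a product of \(\XOR\)-forms with nonzero restriction to \(\ul{j}\) comes from a bipartite ``edge graph'', hence its zero set covers all of an interior layer only at prohibitive degree, while the extreme layers \(0\) and \(n\) cost a single \(\XOR\)-form — so interior layers of \(E\) are absorbed into \(\sigma=\prod(\sigma_1-i)\), with the total degree bounded by exactly the combinatorial quantity that forces \(j\notin\zcl_{n,d}(E)\) in the first place.

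The main obstacle is precisely this last conversion: turning a general (or symmetrized) low-degree witness into one of the special product form \emph{without inflating the degree}. I do not see a short self-contained argument for it, and I would instead cite the construction in~\cite{venkitesh-2021-covering-symmetric-sets} directly. If the characterization there is stated in the language of coverings of \(\ul{E}\) by affine hyperplanes rather than polynomials, one extra translation lemma is needed --- namely that a cover of \(\ul{E}\) (missing some point of \(\ul{j}\)) by \(k\) hyperplanes of the form \(X_a=X_b\) together with hyperplanes of the form \(\sum_t X_t=i\) corresponds, via the product of the linear forms, to a polynomial \(P=\ell_1\cdots\ell_k\sigma\) of the asserted degree with \(P|_{\ul{E}}=0\) and \(P|_{\ul{j}}\ne0\) --- and this translation is routine. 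Either way, the substance of Observation~\ref{obs:layer-real} lies entirely in the shape of the witnesses produced in~\cite{venkitesh-2021-covering-symmetric-sets}, not in any new computation.
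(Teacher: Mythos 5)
Your primary route — invoke the characterization of $\zcl_{n,d}(E)$ over $\mb{R}$ from~\cite{venkitesh-2021-covering-symmetric-sets}, observe that the certifying polynomial constructed there is already a product of $\XOR$-forms and level forms $\sigma_1(\mb{X})-i$, and read off $P=\ell_1\cdots\ell_k\sigma$ — is exactly what the paper does: the observation is stated with no argument beyond the citation to~\cite{venkitesh-2021-covering-symmetric-sets}, so the substance lies entirely in that reference as you say. Your secondary, self-contained sketch (symmetrize a witness over the stabilizer and then try to force product structure) is correctly flagged as incomplete, and is not part of the paper's approach.
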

	What we see now is that we can infer such statements, of the kind of Observation~\ref{obs:layer-real}, for Z-closures over \(\mb{F}_p\) as well.  More precisely, from Theorem~\ref{thm:clo-layer-p} and the proof of Proposition~\ref{pro:layer-obs}, we have the following.  For any \(S,T\subseteq[n],\,S\cap T=\emptyset\), define a \tsf{generalized monomial} to be the polynomial \(\mb{X}^{(S,T)}\coloneqq\prod_{s\in S}X_s\prod_{t\in T}(1-X_t)\).
	\begin{observation}\label{obs:layer-symm=Z}
		Let \(d\in[0,n],\,\ell=\ell_p(d)\).
		\begin{enumerate}[(a)]
			\item  Let \(i\in[d,n-d]\).  If \(j\not\in\zcl_{n,d}(i)\), then there exists a symmetric polynomial \(\sigma(\mb{X})\in\mb{F}_p[\mb{X}]\) such that \(\deg\sigma\le d\), \(\sigma|_{\ul{i}}=0\) and \(\sigma|_{\ul{j}}\ne0\).  In other words, \(\zcl_{n,d}(i)=\symcl_d(n,i)\).
			\item  Let \(i\not\in[d,n-d]\).  If \(j\not\in\zcl_{n,d}(i)\), then there exists a polynomial \(P(\mb{X})\in\mb{F}_p[\mb{X}]\) that is either symmetric or a generalized monomial, such that \(\deg P\le d\), \(P|_{\ul{i}}=0\) and \(P|_{\ul{j}}\ne0\).
		\end{enumerate}
	\end{observation}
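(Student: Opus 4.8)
The plan is to extract the required witness polynomials directly from the proofs of Proposition~\ref{pro:layer-obs} and Theorem~\ref{thm:clo-layer-p}, observing that the polynomials produced there already have the claimed shape.

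For part (a), suppose \(i\in[d,n-d]\) and \(j\not\in\zcl_{n,d}(i)\). By Theorem~\ref{thm:clo-layer-p} we have \(\zcl_{n,d}(i)=i\oplus p^\ell\), so \(j\not\equiv i\modulo{p^\ell}\), and by Observation~\ref{obs:p-ary-prop}(c) there is some \(t\in[0,\ell-1]\) with \(j_t\ne i_t\). The polynomial \(\sigma(\mb{X})=\sigma_{p^t}(\mb{X})-i_t\) is symmetric, has degree \(p^t\le p^{\ell-1}\le d\), and by Corollary~\ref{cor:sym-digit}(a) satisfies \(\sigma(x)\equiv|x|_t-i_t\modulo{p}\); hence \(\sigma|_{\ul{i}}=0\) while \(\sigma|_{\ul{j}}\ne0\). (This is exactly the polynomial used in the proof of Proposition~\ref{pro:layer-obs}(b).) For the displayed equality, note that \(\symcl_{n,d}(i)\supseteq\zcl_{n,d}(i)\) always, since the symmetric closure is cut out by a subfamily of the polynomials that cut out the Z-closure; the symmetric witness just produced supplies the reverse inclusion, so the two coincide.

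For part (b), suppose \(i\not\in[d,n-d]\); then \(\zcl_{n,d}(i)=\{i\}\) by Proposition~\ref{pro:layer-obs}(a), so \(j\not\in\zcl_{n,d}(i)\) just means \(j\ne i\). Consider first the case \(i<d\). If \(j>i\), take \(P(\mb{X})=X_1\cdots X_{i+1}=\mb{X}^{([i+1],\emptyset)}\), a generalized monomial of degree \(i+1\le d\) that vanishes on \(\ul{i}\) and is nonzero at \(1^j0^{n-j}\in\ul{j}\). If \(j<i\), then (using \(\ell_p(i)\le\ell_p(d)=\ell\)) Observation~\ref{obs:p-ary-prop}(b) gives \(t\in[0,\ell-1]\) with \(j_t<i_t\), and \(\sigma_{p^t}(\mb{X})-i_t\) is a symmetric polynomial of degree \(p^t\le d\) that works --- again exactly as in the proof of Proposition~\ref{pro:layer-obs}(a). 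It remains to treat \(i>n-d\). Then \(n-i<d\) and \(n-j\ne n-i\); applying the case just handled to the layer \(n-i\) and the point \(n-j\) produces a witness \(P'(\mb{X})\) for \((n-i,n-j)\) that is symmetric or a generalized monomial, and we set \(P(\mb{X})=P'(1-X_1,\ldots,1-X_n)\). The substitution \(X_k\mapsto1-X_k\) does not increase degree, carries a symmetric polynomial to a symmetric polynomial, and carries \(\mb{X}^{(S,T)}\) to \(\mb{X}^{(T,S)}\); and since complementing all coordinates maps \(\ul{n-i}\) onto \(\ul{i}\) and \(\ul{n-j}\) onto \(\ul{j}\), the conditions \(P'|_{\ul{n-i}}=0\), \(P'|_{\ul{n-j}}\ne0\) transfer to \(P|_{\ul{i}}=0\), \(P|_{\ul{j}}\ne0\). (This is the mechanism already used in Proposition~\ref{pro:clo-prop}(b).)

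The main point to watch is the reduction to the case \(i>n-d\): one must check that the complementation \(X_k\mapsto1-X_k\) preserves the class ``symmetric polynomial or generalized monomial''. Both halves of this are immediate --- from \(\mb{X}^{(S,T)}\mapsto\mb{X}^{(T,S)}\) in the generalized-monomial case, and from the fact that permuting the complemented variables is the same as permuting the original ones in the symmetric case --- so the proof is essentially the bookkeeping described above.
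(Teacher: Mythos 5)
Your proposal is correct and is exactly the paper's (implicit) argument: the paper derives this observation by reading the witness polynomials off from Theorem~\ref{thm:clo-layer-p} and the proof of Proposition~\ref{pro:layer-obs}, which is precisely what you do --- $\sigma_{p^t}(\mb{X})-i_t$ for the symmetric cases, $X_1\cdots X_{i+1}$ for $i<d$, $j>i$, and the complementation $X_k\mapsto 1-X_k$ (as in Proposition~\ref{pro:clo-prop}(b)) for $i>n-d$. Your added check that complementation preserves the class ``symmetric or generalized monomial'' is the right point to verify and is done correctly.
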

	Inspired by Observation~\ref{obs:layer-symm=Z}, a reasonable conjecture about Z-closures of symmetric sets over \(\mb{F}_p\) is the following.
	\begin{conjecture}\label{conj:Z-cl-general}
		Let \(d\in[0,n]\) and \(E\subseteq[0,n]\).  For \(j\in[0,n]\), if \(j\not\in\zcl_{n,d}(E)\), then there exists a polynomial \(P(\mb{X})=m_1(\mb{X})\cdots m_k(\mb{X})\sigma(\mb{X})\in\mb{F}_p[\mb{X}]\), where \(m_1(\mb{X}),\ldots,m_k(\mb{X})\) are generalized monomials and \(\sigma(\mb{X})\) is a symmetric polynomial, such that \(\deg P\le d\), \(P|_{\ul{i}}=0\) and \(P|_{\ul{j}}\ne0\).
	\end{conjecture}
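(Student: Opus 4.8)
The plan is first to observe that the conclusion is equivalent to the existence of a witness of the form $P=\mb{X}^{(S,T)}\sigma$ with $\mb{X}^{(S,T)}$ a \emph{single} generalized monomial and $\sigma$ symmetric: on $\{0,1\}^n$, using $X_i^2=X_i$ and $X_i(1-X_i)=0$, a product of generalized monomials collapses either to $0$ or to $\mb{X}^{(\bigcup S_i,\bigcup T_i)}$, so the product form in the statement buys nothing beyond a single factor. With this reformulation the strategy is to bootstrap from the two cases we already control — single layers, via Observation~\ref{obs:layer-symm=Z}, and ``interior'' symmetric sets in high dimension, via Theorem~\ref{thm:mod-p-main}, which yields $\zcl_{n,d}(E)=\symcl_{n,d}(E)$ and hence a \emph{symmetric} witness ($S=T=\emptyset$) whenever $E\subseteq[d,n-d]$ and $n\ge 4p^{\ell_p(d)}-1$. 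The generalized-monomial factor $\mb{X}^{(S,T)}$ is meant precisely to ``peel off'' the boundary layers $E\cap[0,d-1]$ and $E\cap[n-d+1,n]$ and reduce to this interior situation.

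\textbf{Steps, in order.} (1) Normalize $E$ and $j$ using the symmetries of Proposition~\ref{pro:clo-prop} (reflection $E\mapsto n-E$ and padding $E\mapsto E+k$, $n\mapsto n+k$), so that the low boundary part $E_{\mathrm{lo}}=E\cap[0,d-1]$ is the one being treated, with $j$ positioned suitably relative to $\min E$. (2) If $E_{\mathrm{lo}}\ne\emptyset$ with $i_0=\min E$, note that the generalized monomial $X_1\cdots X_{i_0+1}$ vanishes identically on $\ul{0},\dots,\ul{i_0}$, and that requiring a polynomial to be divisible by it is the same as restricting attention to the subcube $\{x:x_1=\cdots=x_{i_0+1}=1\}\cong\{0,1\}^{\,n-i_0-1}$, on which the remaining layers of $E$ form a strictly smaller instance with parameters $(n-i_0-1,\ d-i_0-1,\ (E-i_0-1)\cap\mb{N},\ j-i_0-1)$; symmetrically, $\prod_t(1-X_t)$ over a ``top'' block of coordinates peels off $E\cap[n-d+1,n]$. (3) Iterate (2) until the symmetric set lies inside $[d'',n''-d'']$; then, assuming $n''\ge 4p^{\ell_p(d'')}-1$, apply Theorem~\ref{thm:mod-p-main} to get a symmetric $\sigma$ of degree $\le d''$ witnessing non-membership of the reduced target. (4) Multiply $\sigma$ back by the accumulated generalized monomials to assemble $P=\mb{X}^{(S,T)}\sigma$; the total degree stays $\le d$ because each peeling step spends exactly one unit of degree budget per coordinate removed, and $n''-d''=n-d$ is invariant along the way.

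\textbf{Main obstacle.} The difficulty — and the reason this is only a conjecture — is that ``$j\notin\zcl_{n,d}(E)$'' does \emph{not} obviously descend through step (2): an arbitrary witness $f\in\mc{I}_{n,d}(\ul E)$ need not be divisible by $X_1\cdots X_{i_0+1}$, and in characteristic $p$ one cannot just average $f$ over $\mf{S}_n$ to replace it by a structured polynomial, since $\mb{F}_p[\mf{S}_n]$ fails to be semisimple as soon as $p\le n$. Executing step (2) rigorously seems to demand a finer analysis of $\mc{I}_{n,d}(\ul E)$ as an $\mf{S}_n$-module — e.g.\ showing that whenever its restriction to a layer $\ul j$ is nonzero it already contains an element of the prescribed shape $\mb{X}^{(S,T)}\sigma$ — and the relevant permutation modules $M^{(n-k,k)}$ have an intricate submodule lattice in positive characteristic. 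A second, genuinely separate gap is the low-dimensional range $n<4p^{\ell_p(d)}-1$ (in particular when $E$ has no boundary layers at all, so no peeling is possible), where Theorem~\ref{thm:mod-p-main} is unavailable and even the interior case is open, so one cannot rule out that a subtler witness is needed there. Finally, the characteristic-zero analogue (Observation~\ref{obs:layer-real}) follows from~\cite{venkitesh-2021-covering-symmetric-sets}, but that argument rests on Hilbert-function and interpolation identities special to characteristic zero, so it offers a template rather than a transferable proof.
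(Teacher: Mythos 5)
The statement you set out to prove is false, and the paper refutes it rather than proving it. Counterexample~\ref{counterex:not-in-cl} takes $p=2$, $n=5$, $d=2$, $E=\{1,4\}$, $j=0$: the degree-$2$ polynomial $(1+X_1+X_2+X_3+X_4)(1+X_2+X_3+X_4+X_5)$ vanishes on $\ul{\{1,4\}}$ but not at $0^5$, so $0\notin\zcl_{5,2}(\{1,4\})$; yet an exhaustive case analysis (no degree-$\le 1$ witness exists; degree-$2$ generalized monomials and products of two degree-$1$ ones fail; a degree-$1$ generalized monomial times a degree-$1$ symmetric polynomial fails; degree-$2$ symmetric polynomials fail) shows that no polynomial of the form $m_1(\mb{X})\cdots m_k(\mb{X})\sigma(\mb{X})$ of degree at most $2$ separates $\ul{\{1,4\}}$ from $0^5$. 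Your preliminary reduction — that on $\{0,1\}^n$ a product of generalized monomials collapses to $0$ or to a single $\mb{X}^{(S,T)}$ — is correct, but it only sharpens the impossibility. Note also that this instance sits exactly in the regime your plan cannot reach: $E\not\subseteq[d,n-d]=[2,3]$, so Theorem~\ref{thm:mod-p-main} does not apply, and the actual witness is a product of two non-symmetric affine forms, which is not something your peeling step could ever produce.

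Consequently, the obstacles you flag in step (2) are not merely technical gaps to be closed by a finer $\mf{S}_n$-module analysis of $\mc{I}_{n,d}(\ul{E})$: the required structured witness simply does not exist in general. The paper's response is to replace the conjecture by the weaker Conjecture~\ref{conj:Z-cl-general-weaker}, allowing arbitrary degree-one factors $\ell_1,\dots,\ell_k$ in place of generalized monomials — a form that does accommodate the counterexample's witness — and to leave that weaker statement open. The special cases your steps (3)--(4) rely on (single layers via Observation~\ref{obs:layer-symm=Z}, and $E\subseteq[d,n-d]$ with $n\ge 4p^{\ell_p(d)}-1$ via Theorem~\ref{thm:mod-p-main}) are indeed the cases in which the conjectured form is available; it is precisely the boundary-peeling step (2) at which the conjecture itself fails.
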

	Unfortunately, Conjecture~\ref{conj:Z-cl-general} is not true; the following is a counterexample.
	
	\begin{counterexample}\label{counterex:not-in-cl}
		We see that over \(\mb{F}_2\), \(0\not\in\zcl_{5,2}(\{1,4\})\).  A witness polynomial is the degree-2 polynomial
		\[
		(1+X_1+X_2+X_3+X_4)(1+X_2+X_3+X_4+X_5),
		\]
		which vanishes on \(\ul{\{1,4\}}\), but does not vanish at \(0^5\).  However, this witness polynomial is not of the form claimed in Conjecture~\ref{conj:Z-cl-general}.  Let us now show that there is no witness polynomial of the claimed form.  The following cover all the possibilities of a potential witness polynomial of the claimed form.
		\begin{itemize}
			\item  Firstly, it is easy to see that there is no nonzero polynomial of degree at most 1 that vanishes on \(\ul{\{1,4\}}\), and does not vanish at \(0^5\).  So a potential witness polynomial must have degree 2.
			\item  A generalized monomial of degree 2 has zero set of the form \(\{x\in\{0,1\}^n:x_i=a\tx{ or }x_j=b\}\), for \(i,j\in[5],\,i\ne j\) and \(a,b\in\{0,1\}\), which clearly does not contain \(\ul{\{1,4\}}\).  A product of two generalized monomials of degree 1, which is not a generalized monomial of degree 2, is simply \(X_i(1-X_i)\) for some \(i\in[5]\), which vanishes everywhere.
			\item  A product of a generalized monomial of degree 1 and a symmetric polynomial of degree 1 has zero set of the form \(\{x\in\{0,1\}^5:x_i=a\tx{ or }|x|=b\modulo{2}\}\), for \(i\in[5],\,a,b\in\{0,1\}\), which again does not contain \(\ul{\{1,4\}}\).
			\item  A product of two distinct symmetric polynomial of degree 1 will vanish everywhere.  The only other possibility of a  symmetric polynomial of degree 2 has zero set of the form \(\{x\in\{0,1\}^5:|x|_1=a\}\), for \(a\in\{0,1\}\), which either contains \(\ul{\{0,1,4\}}\) or does not contains \(\ul{\{1,4\}}\).
		\end{itemize}
		This completes the analysis.
	\end{counterexample}

	Nevertheless, the form of the above witness polynomial motivates the following weaker conjecture, which we leave open.
	\begin{conjecture}\label{conj:Z-cl-general-weaker}
		Let \(d\in[0,n]\) and \(E\subseteq[0,n]\).  For \(j\in[0,n]\), if \(j\not\in\zcl_{n,d}(E)\), then there exists a polynomial \(P(\mb{X})=\ell_1(\mb{X})\cdots \ell_k(\mb{X})\sigma(\mb{X})\in\mb{F}_p[\mb{X}]\), where \(\deg\ell_1=\cdots=\deg\ell_k=1\), and \(\sigma(\mb{X})\) is a symmetric polynomial, such that \(\deg P\le d\), \(P|_{\ul{i}}=0\) and \(P|_{\ul{j}}\ne0\).
	\end{conjecture}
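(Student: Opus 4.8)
Fix $E\subseteq[0,n]$, $d\in[0,n]$ with $\ell=\ell_p(d)$, and $j\in[0,n]$ with $j\notin\zcl_{n,d}(E)$; as usual set $w_j=1^j0^{n-j}$. Since every symmetric polynomial is in particular a polynomial, $\zcl_{n,d}(E)\subseteq\symcl_{n,d}(E)$ always, so the real content of the conjecture is to produce, for each $j$ in the ``defect'' $\symcl_{n,d}(E)\setminus\zcl_{n,d}(E)$ (the case $j\notin\symcl_{n,d}(E)$ being immediate, with $k=0$), a witness of degree at most $d$ in the factored form $\ell_1\cdots\ell_k\sigma$ that vanishes on $\ul E$ and is nonzero at $w_j$. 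The plan is an induction on $n$ that reduces the general symmetric set to configurations already understood -- single layers via Observation~\ref{obs:layer-symm=Z}, and ``wide'' sets via Theorem~\ref{thm:mod-p-main} -- paying for each reduction step with one linear factor.

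\textbf{Step 1 (the free case).} If $n\ge4p^\ell-1$ and $E\subseteq[d,n-d]$, then Theorem~\ref{thm:mod-p-main} gives $\zcl_{n,d}(E)=\symcl_{n,d}(E)$, so $j\notin\symcl_{n,d}(E)$ and a symmetric witness $\sigma$ of degree at most $d$ exists; this is the claimed form with $k=0$. Hence it suffices to handle the cases where $E$ meets the boundary strip $[0,d-1]\cup[n-d+1,n]$, or $n<4p^\ell-1$ -- and Counterexample~\ref{counterex:not-in-cl} shows that genuine linear factors are then unavoidable.

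\textbf{Step 2 (building blocks and the inductive step).} The proposed building block is the \emph{windowed linear form} $\lambda_{W,c}(\mb{X})=c+\sum_{i\in W}X_i$ for $W\subseteq[n]$ and $c\in\mb{F}_p$ -- exactly the shape of the factors in Counterexample~\ref{counterex:not-in-cl}. On the Boolean cube such a form vanishes precisely on the points $x$ with $\sum_{i\in W}x_i\equiv-c\pmod{p}$, and by letting $|W|$ grow it can separate two layers whose weights are congruent modulo $p^\ell$, which no symmetric polynomial of degree below $p^\ell$ can do (Corollary~\ref{cor:sym-digit}(b)). Given $j\notin\zcl_{n,d}(E)$, I would argue that there is always \emph{some} coordinate $t$ and value $a\in\{0,1\}$, or more generally a window $W$ and residue $c$, such that the restriction of the configuration to the hyperplane $\{\lambda_{W,c}=a\}$ still witnesses non-membership in fewer variables with the degree budget reduced by one; peeling that hyperplane off as a degree-$1$ factor $\lambda$ and applying the inductive hypothesis to the restricted configuration (transported across dimensions using Proposition~\ref{pro:clo-prop}) then assembles the required product. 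The single-layer analysis of Observation~\ref{obs:layer-symm=Z}, where the witness is already symmetric or a generalized monomial, supplies the base of this induction.

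\textbf{Main obstacle.} The delicate and, I expect, genuinely hard point is Step 2: there is at present no explicit combinatorial description of $\zcl_{n,d}(E)$ for arbitrary symmetric $E$ in the regime $n<4p^\ell-1$ (precisely the description ``still in want'' noted at the start of this section), so it is not clear a priori which window $W$ and residue one should peel off, nor that the degree of the resulting product $\ell_1\cdots\ell_k\sigma$ can always be kept at most $d$ while certifying vanishing on every layer of $\ul E$ and non-vanishing at $w_j$. Counterexample~\ref{counterex:not-in-cl} already rules out restricting to generalized monomials, so any proof must exploit the full freedom of arbitrary linear forms; finding the right inductive invariant that makes the window choice canonical -- perhaps phrased through the dual Reed--Muller picture of Theorem~\ref{thm:RM-usual} and Fact~\ref{fact:dual-code}, as in the proof of Theorem~\ref{thm:clo-layer-p} -- is where a new idea seems to be needed.
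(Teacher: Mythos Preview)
The paper does not prove this statement: Conjecture~\ref{conj:Z-cl-general-weaker} is explicitly introduced with the words ``which we leave open,'' and Theorem~\ref{thm:mod-p-main} is presented only as a special case (with a stronger conclusion, namely $k=0$). So there is no proof in the paper to compare your proposal against; what you have written is an outline of an attack on an open problem, and you yourself flag the central gap in your ``Main obstacle'' paragraph.

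That said, let me point to where your inductive scheme, as stated, does not yet close. Your Step~2 proposes to peel off a windowed linear form $\lambda_{W,c}$ and recurse on the restriction to the hyperplane $\{\lambda_{W,c}=a\}$. But unless $|W|=1$ (i.e.\ a coordinate hyperplane), that restriction does not land you back in a problem of the same shape: the intersection of $\ul{E}$ with $\{\lambda_{W,c}=a\}$ is not a symmetric set in a smaller Boolean cube, so neither Proposition~\ref{pro:clo-prop} nor the inductive hypothesis for symmetric $E$ applies. If instead you restrict to $|W|=1$, you are back to generalized monomials, which Counterexample~\ref{counterex:not-in-cl} already rules out as sufficient. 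In other words, the very feature that makes windowed linear forms powerful enough to separate layers congruent mod $p^\ell$ is the feature that breaks the symmetry you need for the recursion. A workable argument would need either a different inductive invariant that tolerates non-symmetric intermediate sets, or a direct (non-inductive) construction of the product $\ell_1\cdots\ell_k\sigma$ --- neither of which is supplied here, and neither of which the paper claims to know.
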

	
	Theorem~\ref{thm:mod-p-main} proves a special case of Conjecture~\ref{conj:Z-cl-general-weaker}, but with a stronger assertion about the witness polynomial.  Specifically, Theorem~\ref{thm:mod-p-main} states that for \(d\in\mb{N},\,\ell=\ell_p(d)\), if \(n\ge4p^\ell-1\), then the following is true for any \(E\subseteq[d,n-d]\): if \(j\not\in\zcl_{n,d}(E)\), then there exists a symmetric polynomial \(\sigma(\mb{X})\in\mb{F}_p[\mb{X}]\) such that \(\deg\sigma\le d,\,\sigma|_{\ul{E}}=0\) and \(\sigma|_{\ul{j}}\ne0\).
	\begin{remark}
		The condition \(n\ge4p^\ell-1\) in the assumption of Theorem~\ref{thm:mod-p-main}, where \(\ell=\ell_p(d)\), is simply a requirement for our proof.  We believe the assertion of Theorem~\ref{thm:mod-p-main} is true even for all smaller values of \(n\), though we don't have a proof yet.
	\end{remark}
	
	An immediate corollary of Theorem~\ref{thm:mod-p-main} is the following.
	\begin{corollary}\label{cor:clo-p-main-compute}
		Let \(d\in\mb{N},\,\ell=\ell_p(d)\).  If \(n\ge4p^\ell-1\), then for any \(E\subseteq[d,n-d]\), \(\zcl_{n,d}(E)\) can be computed in \(\poly(n)\) time.
	\end{corollary}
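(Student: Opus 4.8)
The plan is to reduce the computation of $\zcl_{n,d}(E)$ to that of $\symcl_{n,d}(E)$, and then to observe that the symmetric closure is a cheap linear-algebra computation. For the first step there is essentially nothing to do: since $n \ge 4p^\ell - 1$ with $\ell = \ell_p(d)$ and $E \subseteq [d,n-d]$, Theorem~\ref{thm:mod-p-main} already gives $\zcl_{n,d}(E) = \symcl_{n,d}(E)$. So it suffices to compute $\symcl_{n,d}(E) \subseteq [0,n]$ in $\poly(n)$ time.

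For the second step I would exploit that the symmetric polynomial functions on $\{0,1\}^n$ of degree at most $d$ form a space of dimension at most $d+1 \le n+1$. Recalling (via Theorem~\ref{thm:CN}) that we work with multilinear representatives, and that $\sigma_0,\ldots,\sigma_n$ is a basis of the symmetric multilinear polynomials with each $\sigma_k$ homogeneous of degree $k$, the degree-$\le d$ symmetric functions are exactly the combinations $\sigma_c := \sum_{k=0}^{d} c_k \sigma_k$ for $c = (c_0,\ldots,c_d) \in \mb{F}_p^{d+1}$. Since $\sigma_k(x) \equiv \binom{|x|}{k} \modulo{p}$ (immediate from the definition), the value of $\sigma_c$ on the layer $\ul{w}$ is $\sum_{k=0}^{d} c_k \binom{w}{k}$ mod $p$. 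I would therefore form the matrix $A \in \mb{F}_p^{(n+1)\times(d+1)}$ with $A_{w,k} = \binom{w}{k} \bmod p$ for $w \in [0,n]$, $k \in [0,d]$, every entry of which is computable in $\poly(n)$ time (e.g.\ via Lucas's Theorem~\ref{thm:Lucas}, or by the Pascal recurrence mod $p$). Then $\sigma_c$ vanishes on $\ul{E}$ iff $A|_E\, c = 0$, where $A|_E$ is the submatrix of rows indexed by $E$, and $\sigma_c$ vanishes on $\ul{j}$ iff the $j$-th row of $A$ is orthogonal to $c$; consequently, for $j \in [0,n]$,
\[
j \in \symcl_{n,d}(E) \quad\iff\quad \mathrm{rank}(A|_{E \cup \{j\}}) = \mathrm{rank}(A|_E).
\]

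Concretely, I would compute a basis $c^{(1)},\ldots,c^{(r)}$ of $\ker(A|_E)$ by Gaussian elimination over $\mb{F}_p$ — polynomial time, since $A|_E$ has at most $n+1$ rows and $d+1 \le n+1$ columns — and then declare $j \in \symcl_{n,d}(E)$ precisely when $(A c^{(s)})_j = 0$ for all $s \in [r]$. Looping over the $n+1$ candidates $j$ keeps the total cost $\poly(n)$, which finishes the proof.

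I do not expect a genuine obstacle in this argument: all the content is encapsulated in Theorem~\ref{thm:mod-p-main}, and once the problem has been transported into the $(d+1)$-dimensional space $\spn\{\sigma_0,\ldots,\sigma_d\}$ it is routine — the point being the sharp contrast with the general finite-degree Z-closure, whose computation would force one to work in the $2^n$-dimensional space of all multilinear polynomials on $\{0,1\}^n$. The only places calling for a sentence of care are the well-definedness of $\sigma_c|_{\ul{w}}$ (trivial, as $\sigma_c$ is symmetric and hence constant on each layer) and the claim that the degree-$\le d$ symmetric functions are spanned by $\sigma_0,\ldots,\sigma_d$ (because $\{\sigma_k\}_{k=0}^n$ is a basis of the symmetric multilinear polynomials, the matrix $[\binom{w}{k}]_{w,k}$ being unit lower-triangular, and degree considerations kill the coefficients $c_k$ for $k > d$).
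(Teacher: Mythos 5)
Your proposal is correct and is essentially the same argument as the paper's: invoke Theorem~\ref{thm:mod-p-main} to replace $\zcl_{n,d}(E)$ by $\symcl_{n,d}(E)$, observe that the degree-$\le d$ symmetric functions are exactly the $\mb{F}_p$-span of $\sigma_0,\ldots,\sigma_d$ (evaluated on a layer $\ul{w}$ via $\sigma_k(1^w 0^{n-w}) \equiv \binom{w}{k} \bmod p$), and then solve the resulting $(|E|) \times (d+1)$ homogeneous linear system by Gaussian elimination. Your explicit matrix $A$ and the rank/kernel reformulation are just a slightly more detailed packaging of the paper's one-paragraph computation.
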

	\begin{proof}
		Let \(d\in[0,n],\,\ell=\ell_p(d)\).  Consider any \(E\subseteq[0,n],\,j\in[0,n]\).  By Theorem~\ref{thm:mod-p-main}, it is enough to show that whether \(j\in\symcl_{n,d}(E)\) can be decided in \(\poly(n)\) time.
		
		Clearly, every symmetric polynomial function, with degree at most \(d\), is a linear combination of \(\sigma_k,\,k\in[0,d]\).  Appealing to Corollary~\ref{cor:sym-digit} (a), the linear system of concern to us is
		\[
		\sum_{k=0}^dc_k\sigma_k(1^i0^{n-i})=0,\quad\tx{for all }i\in E.
		\]
		This is a homogeneous system with \(d+1\le n+1\) variables \(c_k,\,k\in[0,d]\), and \(|E|\le n\) constraints.  The solution space of this system can thus be computed in \(\poly(n)\) time (for instance, by Gaussian elimination).  It is clear that \(j\in\symcl_{n,d}(E)\) if and only if \(\sum_{k=0}^dc_k\sigma_k(1^j0^{n-j})=0\), for every solution \(c_k,\,k\in[0,d]\) of the system.  Therefore, whether \(j\in\symcl_{n,d}(E)\) can be decided in \(\poly(n)\) time.
	\end{proof}

	\subsection{The finite-degree symmetric closure in more detail}\label{subsec:symm-clo-detail}
	
	It is easy to see that again by Theorem~\ref{thm:CN}, while considering the finite-degree symmetric closure \(\symcl_{n,d}\), we can restrict \(d\in[0,n]\).  For any \(d\in[0,n]\) and \(E\subseteq[0,n]\), let \(\mc{S}_{n,d}(E)\) be the set of all symmetric polynomials with degree at most \(d\), that vanish at each point in \(\ul{E}\).  Recall that for any \(E\subseteq[0,n]\), we define \(\symcl_{n,d}(\ul{E})=\mc{Z}(\mc{S}_{n,d}(E))\).  Further, as mentioned earlier, \(\symcl_{n,d}(\ul{E})\) is a symmetric set, and hence we identify (and denote) it by \(\symcl_{n,d}(E)\subseteq[0,n]\).
	
	Let us gather some interesting properties of the finite-degree symmetric closures.  We begin with the following lemma.
	\begin{lemma}\label{lem:translate-clo}
		Let \(d\in[0,n]\) and \(f(\mb{X})=\sum_{u=0}^dc_u\sigma_u(\mb{X})\in\mb{F}_p[\mb{X}]\).  Define
		\begin{align*}
			f^+(\mb{X})&=\sum_{u=0}^d\bigg(\sum_{v=u}^d(-1)^{v-u}c_v\bigg)\sigma_u(\mb{X}),\\
			\tx{and}\quad f^-(\mb{X})&=\sum_{u=0}^d\bigg(\sum_{v=u}^dc_v\bigg)\sigma_u(\mb{X}).
		\end{align*}
		Then
		\begin{enumerate}[(a)]
		\item  \(f(\mb{X})=(f^+)^-(\mb{X})=(f^-)^+(\mb{X})\).
		\item  if \(j,j+1\in[0,n]\), then \(f|_{\ul{j}}=0\) if and only if \(f^+|_{\ul{j+1}}=0\).
		\item  if \(j,j-1\in[0,n]\), then \(f|_{\ul{j}}=0\) if and only if \(f^-|_{\ul{j-1}}=0\).
		\end{enumerate}
	\end{lemma}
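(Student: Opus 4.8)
The plan is to reduce everything to a statement about univariate polynomials. Since $\sigma_u(x)\equiv\binom{|x|}{u}\pmod{p}$ for every $x\in\{0,1\}^n$ and $u\in[0,n]$, I would attach to $f=\sum_{u=0}^d c_u\sigma_u$ the univariate polynomial $F(Z)=\sum_{u=0}^d c_u\binom{Z}{u}\in\mb{F}_p[Z]$, of degree at most $d\le n$; then for every $j\in[0,n]$ the value $f|_{\ul{j}}$ equals $F(j)$ in $\mb{F}_p$. The operations $f\mapsto f^+$ and $f\mapsto f^-$ are $\mb{F}_p$-linear in the coefficient vector $(c_0,\dots,c_d)$, so they induce operations $F\mapsto F^+$ and $F\mapsto F^-$ on polynomials written in the Newton basis $\binom{Z}{0},\dots,\binom{Z}{d}$, and parts (b) and (c) will amount to identifying these two induced operations.

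The key step is to show that $F^+(Z)=F(Z-1)$ and $F^-(Z)=F(Z+1)$ in $\mb{F}_p[Z]$. For the first, I would substitute the identity $\binom{Z-1}{u}=\sum_{v=0}^{u}(-1)^{u-v}\binom{Z}{v}$ — which follows by reading $(1+t)^{Z-1}(1+t)=(1+t)^{Z}$ coefficient-wise in $t$, or by an easy induction on $u$ via Pascal's rule — into $F(Z-1)=\sum_u c_u\binom{Z-1}{u}$, interchange the two summations, and read off the coefficient of $\binom{Z}{v}$: it equals $\sum_{u=v}^{d}(-1)^{u-v}c_u$, which is precisely the coefficient of $\sigma_v$ in $f^+$. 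For the second, I would substitute Pascal's rule $\binom{Z+1}{u}=\binom{Z}{u}+\binom{Z}{u-1}$ into $F(Z+1)=\sum_u c_u\binom{Z+1}{u}$ and collect terms to recover the coefficient transformation defining $f^-$; that is, $F^-(Z)=F(Z+1)$. Matching coefficients in these comparisons is legitimate because $\binom{Z}{0},\dots,\binom{Z}{d}$ are $\mb{F}_p$-linearly independent, having pairwise distinct degrees.

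With this in hand, part (a) is formal: the substitutions $Z\mapsto Z-1$ and $Z\mapsto Z+1$ are mutually inverse, so $(F^+)^-(Z)=F^+(Z+1)=F(Z)$ and $(F^-)^+(Z)=F^-(Z-1)=F(Z)$; comparing Newton-basis coefficients shows that the coefficient vectors of $(f^+)^-$ and of $(f^-)^+$ both coincide with $(c_0,\dots,c_d)$, whence $(f^+)^-=(f^-)^+=f$ as polynomials, and a fortiori as functions on $\{0,1\}^n$. For parts (b) and (c): the hypothesis $j,j+1\in[0,n]$ in (b) makes both layers $\ul{j}$ and $\ul{j+1}$ nonempty, so $f|_{\ul{j}}=F(j)$ and $f^+|_{\ul{j+1}}=F^+(j+1)=F(j)$ in $\mb{F}_p$; therefore $f|_{\ul{j}}=0$ if and only if $f^+|_{\ul{j+1}}=0$. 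Part (c) is the mirror image: $f^-|_{\ul{j-1}}=F^-(j-1)=F(j)=f|_{\ul{j}}$ whenever $j-1,j\in[0,n]$.

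I expect the only nontrivial point to be the bookkeeping in the key step — selecting the correct binomial identity with the right signs and summation ranges and verifying that the transformation it induces on Newton-basis coefficients is exactly the one in the definitions of $f^+$ and $f^-$. Once that is pinned down, (a)--(c) follow immediately, so I do not anticipate a genuine obstacle.
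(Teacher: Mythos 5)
Your Newton-polynomial reformulation --- set $F(Z)=\sum_u c_u\binom{Z}{u}$ so that $f|_{\ul{j}}=F(j)$, and realize $f\mapsto f^+$ as $Z\mapsto Z-1$ via $\binom{Z-1}{u}=\sum_{v=0}^u(-1)^{u-v}\binom{Z}{v}$ --- is sound and is exactly how the paper itself proves part~(b). The gap is in the parallel claim for $f^-$. You assert that collecting terms in $F(Z+1)$ ``recovers the coefficient transformation defining $f^-$,'' but you never do the bookkeeping, and if you do, Pascal's rule plus a reindex gives
\[
F(Z+1)=\sum_{u=0}^{d}c_u\bigl(\tbinom{Z}{u}+\tbinom{Z}{u-1}\bigr)=\sum_{u=0}^{d}(c_u+c_{u+1})\tbinom{Z}{u}\qquad(\tx{with }c_{d+1}\coloneqq 0),
\]
so the Newton coefficient of $\binom{Z}{u}$ in $F(Z+1)$ is $c_u+c_{u+1}$, not the $\sum_{v=u}^{d}c_v$ appearing in the lemma's definition of $f^-$. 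These agree only when $d\le1$, so the identification $F^-(Z)=F(Z+1)$ on which your proofs of (a) and (c) rest is false under the stated definition.

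The mismatch you would uncover is actually an inconsistency in the lemma as written, not only in your argument. The two coefficient maps $c_u\mapsto\sum_{v\ge u}(-1)^{v-u}c_v$ and $c_u\mapsto\sum_{v\ge u}c_v$ are \emph{not} mutual inverses: taking $d=2$ and $f=\sigma_2$ gives $f^+=\sigma_0-\sigma_1+\sigma_2$ and then $(f^+)^-=\sigma_0+\sigma_2\neq f$, and since $\sigma_0,\dots,\sigma_d$ ($d\le n$) are linearly independent as functions on $\{0,1\}^n$, part~(a) fails even as a statement about functions. Correspondingly, the ``Fact'' the paper invokes in its own proof of~(a) is misstated: the inverse of the all-ones upper-triangular matrix $M_d$ is the bidiagonal matrix with $1$'s on the diagonal and $-1$'s on the superdiagonal (the M\"obius matrix of a chain), not the alternating-sign matrix $N_d$; one checks directly that $M_2N_2\neq I$. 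The repair is to take $f^-=\sum_{u=0}^{d}(c_u+c_{u+1})\sigma_u$, which is precisely the Newton-coefficient realization of $Z\mapsto Z+1$; with that corrected definition your computation of $F^-$ is valid and (a)--(c) follow exactly as you outline. One last caveat you should make explicit: for $d\ge p$ the polynomial $\binom{Z}{u}$, $u\ge p$, does not live in $\mb{F}_p[Z]$, so --- as the paper does --- you should verify the coefficient identities over $\mb{Z}$ using integer lifts of the $c_u$ and reduce modulo $p$ only when evaluating.
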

	\begin{proof}
		\begin{enumerate}[(a)]
			\item  The assertion follows immediately from the following elementary fact, which is precisely M\"obius inversion (see, for instance, Stanley~\cite[Chapter 3, Section 3.7]{stanley-2011-enumerative-vol1}) for \([0,d]\) with the obvious linear order.
			
			\tsf{Fact.}\quad  Let \(d\in\mb{N}\), and for \(i,j\in[0,d]\), let \(a_{i,j}=1\) if \(i\le j\), and \(a_{i,j}=0\) if \(i>j\).  Then the integer matrices \(M_d=\begin{bmatrix}
				a_{i,j}
			\end{bmatrix}_{i,j\in[0,d]}\) and \(N_d=\begin{bmatrix}
			(-1)^{j-i}a_{i,j}
		\end{bmatrix}_{i,j\in[0,d]}\) both have determinant 1, and are inverses of each other.
	
		\item  Consider any \(f(\mb{X})=\sum_{u=0}^dc_u\sigma_u(\mb{X})\in\mb{F}_p[\mb{X}]\).  By an abuse of notation, consider the integer representatives \(c_u\in[0,p-1],\,u\in[0,d]\), and let \(Q(Z)=\sum_{u=0}^dc_u\binom{Z}{u}\).  Let
		\begin{align*}
			Q^+(Z)&\coloneqq Q(Z-1)&&\\
			&=\sum_{u=0}^dc_u\binom{Z-1}{u}&&\\
			&=\sum_{u=0}^dc_u\bigg(\binom{Z}{u}-\binom{Z-1}{u-1}\bigg)&\tx{by Pascal's triangle, where }\binom{Z}{-1}\coloneqq0&\\
			&=\sum_{u=0}^d\bigg(\sum_{v=u}^d(-1)^{v-u}c_v\bigg)\binom{Z}{u}.&&\\
		\end{align*}
		Let \(j\in[0,n]\) such that \(j+1\in[0,n]\).  Then we have
		\begin{align*}
			\quad&f(x)=0,\quad\tx{for all }x\in\ul{j}&&\\
			\iff\quad&Q(j)\equiv0\modulo{p}&&\\
			\iff\quad&Q^+(j+1)=Q(j)\equiv0\modulo{p}&&\\
			\iff\quad&f^+(x)=0,\quad\tx{for all }x\in\ul{j+1}&&
		\end{align*}
	
		\item  By Item (a), we have \(f(\mb{X})=(f^-)^+(\mb{X})\).  So the assertion follows by Item (b).\qedhere
		\end{enumerate}	
	\end{proof}
		
	The following property of finite-degree symmetric closures then follows quickly.
	\begin{proposition}\label{pro:symcl-prop}
		For any \(d,k,j\in[0,n]\) and \(E\subseteq[0,n]\) such that \(E+k\subseteq[0,n]\), we have \(j\in\symcl_{n,d}(E)\) if and only if \(j+k\in\symcl_{n,d}(E+k)\).
	\end{proposition}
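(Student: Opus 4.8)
The plan is to reduce the statement to its instance $k = 1$ and then read it off from Lemma~\ref{lem:translate-clo}, which supplies degree-preserving operations $f \mapsto f^{+}$ and $f \mapsto f^{-}$ on symmetric polynomials of degree at most $d$ that shift the relevant vanishing weight by $\pm 1$. Recall that $j \in \symcl_{n,d}(E)$ means precisely that every symmetric $f(\mb{X}) = \sum_{u=0}^{d} c_u \sigma_u(\mb{X})$ with $f|_{\ul{E}} = 0$ also satisfies $f|_{\ul{j}} = 0$. We may of course assume $j + k \le n$, since otherwise the right-hand side of the claimed equivalence is not even well-posed.

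For the case $k = 1$ (so that $E + 1 \subseteq [0,n]$ and $j + 1 \le n$), I would prove both directions directly. For the forward implication $j \in \symcl_{n,d}(E) \Rightarrow j + 1 \in \symcl_{n,d}(E+1)$: let $g = \sum_{u=0}^{d} c_u \sigma_u$ be symmetric of degree at most $d$ with $g|_{\ul{E+1}} = 0$; then $g^{-}$ is again supported on $\sigma_0, \ldots, \sigma_d$, hence symmetric of degree at most $d$, and applying Lemma~\ref{lem:translate-clo}(c) at each weight $i + 1$ with $i \in E$ gives $g^{-}|_{\ul{E}} = 0$, so the hypothesis forces $g^{-}|_{\ul{j}} = 0$, and one more application of part (c), now at weight $j + 1$, yields $g|_{\ul{j+1}} = 0$. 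For the reverse implication: let $f$ be symmetric of degree at most $d$ with $f|_{\ul{E}} = 0$; Lemma~\ref{lem:translate-clo}(b) applied at each $i \in E$ gives $f^{+}|_{\ul{E+1}} = 0$, so $j + 1 \in \symcl_{n,d}(E+1)$ forces $f^{+}|_{\ul{j+1}} = 0$, and part (b) at weight $j$ yields $f|_{\ul{j}} = 0$. (Part (a) of Lemma~\ref{lem:translate-clo} is not strictly needed here, though it explains why the argument closes up: $f \mapsto f^{+}$ and $f \mapsto f^{-}$ are mutually inverse.)

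For general $k$ I would iterate, obtaining the chain
\[
j \in \symcl_{n,d}(E) \iff j+1 \in \symcl_{n,d}(E+1) \iff \cdots \iff j+k \in \symcl_{n,d}(E+k),
\]
a succession of $k$ applications of the $k = 1$ case (the case $k = 0$ being vacuous). The one point that needs checking is that each link of the chain is legitimate, i.e.\ that all weights it mentions --- namely $i$ and $i+1$ for $i \in E + m$, and $j + m$ and $j + m + 1$, for $0 \le m < k$ --- lie in $[0,n]$; this follows from $E + k \subseteq [0,n]$ (whence $E + m \subseteq [0,n]$ for every $0 \le m \le k$) together with $j + k \le n$. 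I do not expect any genuine obstacle beyond this routine bookkeeping, since Lemma~\ref{lem:translate-clo} already carries the entire combinatorial content.
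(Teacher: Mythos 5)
Your proposal is correct and follows essentially the same route as the paper's proof: reduce to the case $k=1$ and then invoke Lemma~\ref{lem:translate-clo}, using the operations $f\mapsto f^{+}$ and $f\mapsto f^{-}$ to move between $\mc{S}_{n,d}(E)$ and $\mc{S}_{n,d}(E+1)$. The only difference is that you spell out the converse direction explicitly (the paper dismisses it as similar) and you are slightly more careful about the range bookkeeping; neither constitutes a different approach.
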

	\begin{proof}
		It is enough to prove that for \(E\subseteq[0,n]\) such that \(E+1\subseteq[0,n]\), we have \(j\in\symcl_{n,d}(E)\) if and only if \(j+1\in\symcl_{n,d}(E+1)\).  Again, it is enough to show that if \(j\in\symcl_{n,d}(E)\), then \(j+1\in\symcl_{n,d}(E+1)\).  The argument for the converse is similar.
		
		Let \(f(\mb{X})\in\mc{S}_{n,d}(E+1)\).  By Lemma~\ref{lem:translate-clo}, we have \(f^-(\mb{X})\in\mc{S}_{n,d}(E)\).  So \(f^-|_{\ul{j}}=0\).  Again by Lemma~\ref{lem:translate-clo}, this implies \(f|_{\ul{j+1}}=(f^-)^+|_{\ul{j+1}}=0\).  This completes the proof.
	\end{proof}
		
	Let us now gather some fairly straightforward lemmas, which are important for our results.
	\begin{lemma}\label{lem:mod-p-translate}
		Let \(d\in[0,n],\,\ell=\ell_p(d)\).
		\begin{enumerate}[(a)]
			\item  For any \(E\subseteq[0,n]\), if \(j\in E\) such that \(j+p^\ell\in[0,n]\), then
			\[
			\symcl_{n,d}(E)=\symcl_{n,d}(n,E\cup\{j+p^\ell\}\setminus\{j\}).
			\]
			\item  For any \(E\subseteq[0,n],\,j\in[0,n]\), we have
			\[
			j\in\symcl_{n,d}(E)\quad\iff\quad j\oplus p^\ell\subseteq\symcl_{n,d}(E).
			\]
			\item  For any \(E\subseteq[d,n-d]\), if \(j\in E\) such that \(j+p^\ell\in[d,n-d]\), then \[\zcl_d(E)=\zcl_d(E\cup\{j+p^\ell\}\setminus\{j\}).\]
			\item  For any \(E\subseteq[d,n-d],\,j\in[d,n-d]\), we have
			\[
			j\in\zcl_{n,d}(E)\quad\iff\quad j\oplus p^\ell\subseteq\zcl_{n,d}(E).
			\]
		\end{enumerate}
	\end{lemma}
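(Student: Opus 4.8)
The plan is to split the four items into two dual pairs. Items (a) and (b) concern \(\symcl_{n,d}\), and I would derive both directly from Corollary~\ref{cor:sym-digit}(b) together with the bound \(d\le p^\ell-1\) (i.e.\ \(\ell=\ell_p(d)\)). Items (c) and (d) concern \(\zcl_{n,d}\), and I would derive both from Theorem~\ref{thm:clo-layer-p} combined with the closure-operator axioms (extensivity, monotonicity, idempotence) recalled in the introduction.

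For (a), the key point is that the polynomial sets \(\mc{S}_{n,d}(E)\) and \(\mc{S}_{n,d}(E')\) literally coincide, where \(E'=E\cup\{j+p^\ell\}\setminus\{j\}\). Indeed, any symmetric \(f\) of degree at most \(d\) is an \(\mb{F}_p\)-linear combination of \(\sigma_0,\ldots,\sigma_d\), and since \(d\le p^\ell-1\) and \(j\equiv j+p^\ell\modulo{p^\ell}\), Corollary~\ref{cor:sym-digit}(b) gives \(f|_{\ul j}=f|_{\ul{j+p^\ell}}\); hence \(f\) vanishes on \(\ul E\) if and only if it vanishes on \(\ul{E'}\). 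Taking common zero sets yields \(\symcl_{n,d}(E)=\symcl_{n,d}(E')\). For (b), the direction \(\Leftarrow\) is immediate since \(j\in j\oplus p^\ell\); for \(\Rightarrow\), fix any \(j'\in j\oplus p^\ell\) and any \(f\in\mc{S}_{n,d}(E)\): since \(j\in\symcl_{n,d}(E)\) we have \(f|_{\ul j}=0\), and Corollary~\ref{cor:sym-digit}(b) (using \(\deg f\le d\le p^\ell-1\) and \(j'\equiv j\modulo{p^\ell}\)) gives \(f|_{\ul{j'}}=f|_{\ul j}=0\), so \(j'\in\symcl_{n,d}(E)\).

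For (c) and (d), I would first record the auxiliary claim: if \(i\in E\subseteq[d,n-d]\) and \(i'\in[d,n-d]\) with \(i'\equiv i\modulo{p^\ell}\), then \(\zcl_{n,d}(E)=\zcl_{n,d}(E\cup\{i'\})\). One inclusion is monotonicity; for the other, Theorem~\ref{thm:clo-layer-p} gives \(\zcl_{n,d}(i)=i\oplus p^\ell\ni i'\), and \(\{i\}\subseteq E\) with monotonicity gives \(i'\in\zcl_{n,d}(E)\), so \(E\cup\{i'\}\subseteq\zcl_{n,d}(E)\) and then idempotence gives \(\zcl_{n,d}(E\cup\{i'\})\subseteq\zcl_{n,d}(\zcl_{n,d}(E))=\zcl_{n,d}(E)\). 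Item (c) then follows by applying this claim twice, to \((E,j,j+p^\ell)\) and to \((E',j+p^\ell,j)\), using \(E\cup\{j+p^\ell\}=E'\cup\{j\}\) (since \(j\in E\)). Item (d): \(\Leftarrow\) is trivial, and for \(\Rightarrow\), from \(\{j\}\subseteq\zcl_{n,d}(E)\) monotonicity and idempotence give \(\zcl_{n,d}(j)\subseteq\zcl_{n,d}(E)\), while Theorem~\ref{thm:clo-layer-p} identifies \(\zcl_{n,d}(j)=j\oplus p^\ell\) since \(j\in[d,n-d]\).

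None of these steps is deep; the lemma is essentially bookkeeping, and the main place to be careful is item (c). There one must verify that the hypotheses \(j\in E\subseteq[d,n-d]\) and \(j+p^\ell\in[d,n-d]\) are exactly what is needed to invoke Theorem~\ref{thm:clo-layer-p} for both \(\zcl_{n,d}(j)\) and \(\zcl_{n,d}(j+p^\ell)\), and that \(E'\subseteq[d,n-d]\) as well; the case where \(j+p^\ell\) already lies in \(E\) is harmless, since then \(E'=E\setminus\{j\}\) and the same two applications of the auxiliary claim still go through.
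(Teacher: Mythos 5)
Your proposal is correct and follows essentially the same route as the paper: items (a) and (b) via Corollary~\ref{cor:sym-digit}(b) and the fact that degree-\(\le d\) symmetric polynomials are spanned by \(\sigma_0,\ldots,\sigma_d\) with \(d\le p^\ell-1\), and items (c) and (d) via Theorem~\ref{thm:clo-layer-p}. Your explicit bookkeeping with the closure-operator axioms in (c) and (d) is just a more spelled-out version of the paper's one-line observation that a degree-\(\le d\) polynomial vanishes on layer \(\ul{i}\) iff it vanishes on layer \(\ul{i'}\) for \(i,i'\in[d,n-d]\) with \(i'\equiv i\modulo{p^\ell}\).
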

	\begin{proof}
		It is easy to note that every symmetric polynomial function, with degree at most \(d\), is a linear combination of \(\sigma_k,\,k\in[0,d]\).  So by Corollary~\ref{cor:sym-digit} (b), for any symmetric polynomial function \(f\) with \(\deg f\le d\), and any \(x,y\in\{0,1\}^n\) with \(|y|\in|x|\oplus p^\ell\), we have \(f(y)=f(x)\).  This concludes the proof of Item (a) and Item (b).
		
		By Theorem~\ref{thm:clo-layer-p}, for any polynomial \(f(\mb{X})\in\mb{F}_p[\mb{X}]\) with \(\deg f\le d\), and any \(x,y\in\{0,1\}^n\) with \(|x|\in[d,n-d],\,|y|\in|x|\oplus p^\ell\in[d,n-d]\), we have \(f(y)=0\) if and only if \(f(x)=0\).  This concludes the proof of Item (c) and Item (d).
	\end{proof}
	
	For any \(E\subseteq[d,n-d]\) and any interval \(I\subseteq\mb{N}\) with \(|I|=p^\ell\), define \(E_I\subseteq I\) as follows:  for any \(j\in I\), define \(j\in E_I\) if \(j+kp^\ell\in E\) for some \(k\in\mb{Z}\).  An immediate consequence of Lemma~\ref{lem:mod-p-translate} is the following observation.
	\begin{observation}\label{obs:LJ}
		Let \(d\in[0,n]\) and \(\ell=\ell_p(d)\).  For any \(E\subseteq[d,n-d]\) and any interval \(I\subseteq[d,n-d]\) with \(|I|=p^\ell\),
		\[
		\symcl_{n,d}(E)=\symcl_{n,d}(E_I)\quad\tx{and}\quad\zcl_{n,d}(E)=\zcl_{n,d}(E_I).
		\]
	\end{observation}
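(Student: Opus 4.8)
The plan is to bypass the explicit translations of Lemma~\ref{lem:mod-p-translate} and instead show directly that passing from \(E\) to \(E_I\) changes neither the space \(\mc{S}_{n,d}\) of symmetric polynomials of degree at most \(d\) vanishing on the associated symmetric set, nor — under the hypothesis \(I\subseteq[d,n-d]\) — the space \(\mc{I}_{n,d}\) of \emph{all} polynomials of degree at most \(d\) vanishing on it. Since \(\symcl_{n,d}(E)\) and \(\zcl_{n,d}(E)\) are, by definition, the common zero sets of \(\mc{S}_{n,d}(E)\) and \(\mc{I}_{n,d}(E)\) respectively, the two claimed equalities then follow at once. The key structural remark is that \(|I|=p^\ell\) forces \(I\) to contain exactly one representative of each residue class modulo \(p^\ell\); writing \(r(j)\in I\) for the representative of the class of \(j\), one checks straight from the definition that \(E_I=\{r(j):j\in E\}\).

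For the symmetric closure I would use that every symmetric polynomial function of degree at most \(d\) is an \(\mb{F}_p\)-linear combination of \(\sigma_0,\ldots,\sigma_d\), together with \(d\le p^\ell-1\) (which holds as \(\ell=\ell_p(d)\)): Corollary~\ref{cor:sym-digit}(b) then shows that any such function takes a common value on \(\ul{j}\) and \(\ul{j'}\) whenever \(j\equiv j'\modulo{p^\ell}\) (this is exactly the content underlying Lemma~\ref{lem:mod-p-translate}(a)--(b)). Consequently such a function vanishes on \(\ul{E}\) if and only if it vanishes on \(\ul{r(j)}\) for every \(j\in E\), i.e.\ if and only if it vanishes on \(\ul{E_I}\); hence \(\mc{S}_{n,d}(E)=\mc{S}_{n,d}(E_I)\), and therefore \(\symcl_{n,d}(E)=\symcl_{n,d}(E_I)\).

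For the Z-closure I would run the same argument, this time invoking Theorem~\ref{thm:clo-layer-p}. The hypothesis \(I\subseteq[d,n-d]\) guarantees that both \(j\) and \(r(j)\) lie in \([d,n-d]\) for every \(j\in E\); since \(\zcl_{n,d}(k)=k\oplus p^\ell\) for \(k\in[d,n-d]\), we get \(r(j)\in\zcl_{n,d}(j)\) and \(j\in\zcl_{n,d}(r(j))\), so any polynomial of degree at most \(d\) vanishes on \(\ul{j}\) if and only if it vanishes on \(\ul{r(j)}\) (the content underlying Lemma~\ref{lem:mod-p-translate}(c)--(d)). As before this yields \(\mc{I}_{n,d}(E)=\mc{I}_{n,d}(E_I)\), and hence \(\zcl_{n,d}(E)=\zcl_{n,d}(E_I)\).

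There is no real obstacle here — the observation is, as stated, immediate from Lemma~\ref{lem:mod-p-translate} — but the one point to watch is the range condition. For symmetric polynomials nothing beyond \(d\le p^\ell-1\) is needed, which is automatic; for arbitrary polynomials, however, one genuinely needs \(E_I\subseteq I\subseteq[d,n-d]\), because outside the window \([d,n-d]\) the degree-\(d\) Z-closure of a layer collapses to that layer (Proposition~\ref{pro:layer-obs}(a)), and reducing modulo \(p^\ell\) is no longer harmless. So I expect the whole proof to be a few lines, with this bookkeeping being the only thing that needs to be said carefully.
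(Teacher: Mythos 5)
Your proof is correct and follows essentially the same route as the paper: the paper simply declares the observation an immediate consequence of Lemma~\ref{lem:mod-p-translate}, and your argument just unpacks that lemma back into its underlying ingredients (Corollary~\ref{cor:sym-digit}(b) for the symmetric part and Theorem~\ref{thm:clo-layer-p} for the Z-closure part), applied directly to show $\mc{S}_{n,d}(E)=\mc{S}_{n,d}(E_I)$ and $\mc{I}_{n,d}(E)=\mc{I}_{n,d}(E_I)$. You also correctly flag that the hypothesis $I\subseteq[d,n-d]$ is what makes the Z-closure half go through, since $\zcl_{n,d}(\cdot)$ only respects congruence mod $p^\ell$ inside the window $[d,n-d]$.
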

	
	%	Now fix \(l\in\mb{Z}^+\) and consider the indeterminates \(\mb{T}=(T_0,\ldots,T_{l-1})\).  For any \(f(\mb{T})\in\mb{F}_p[\mb{T}]\), we define \(\wdeg_p(f)=\max_{[\mb{T}^\alpha](f)\ne0}\big(\sum_{k=0}^{l-1}\alpha_kp^k\big)\).  For any \(d\in[0,p^\ell -1]\) and \(B\subseteq[0,p-1]^l\), we define the \tsf{weighted \(d\)-closure} as
	%	\[
	%	\wtcl_d(B)=\mc{Z}(\{f(T_0,\ldots,T_{l-1})\in\mb{F}_p[\mb{T}]:f|_B=0,\,\wdeg_p(f)\le d\}).
	%	\]
	%	Further, define the bijections \(\curlywedge^{(l)}:[0,p^\ell -1]\to[0,p-1]^l,\,\curlywedge^{(l)}(r)=(r_0,\ldots,r_{l-1})\) such that \(r=\sum_{k=0}^{l-1}r_kp^k\), and \(\curlyvee^{(l)}:[0,p-1]^l\to[0,p^\ell -1],\,\curlyvee^{(l)}(a_0,\ldots,a_{l-1})=\sum_{k=0}^{l-1}a_kp^k\).  The following observations are immediate.
	%	\begin{observation}\label{obs:clo-contain}
	%		Let \(d\in[0,n]\) and \(\ell=\ell_p(d)\).  Then for any \(E\subseteq[0,n]\),
	%		\[
	%		\zcl_{n,d}(E)\subseteq\symcl_{n,d}(E)\subseteq\curlyvee^{(l)}(\wtcl_d(\curlywedge^{(l)}(\nicefrac{E}{p^\ell })))\oplus p^\ell .
	%		\]
	%		Further, if \(E\subseteq[d,n-d]\), then
	%		\[
	%		\nicefrac{E}{p^\ell }\oplus p^\ell \subseteq\zcl_{n,d}(E)\subseteq\symcl_{n,d}(E)\subseteq\curlyvee^{(l)}(\wtcl_d(\curlywedge^{(l)}(\nicefrac{E}{p^\ell })))\oplus p^\ell \subseteq\nicefrac{E}{p^{l-1}}\oplus p^{l-1}.
	%		\]
	%	\end{observation}
	
	\subsection{The main lemmas and proof of the main theorem}
	
	We will now characterize \(\zcl_{n,d}(E)\) for every \(E\subseteq[d,n-d]\), when \(n\) is large.  Let us recall the main theorem.
	\restatethmnow{thm:mod-p-main}{Finite-degree Z-closures of symmetric sets}
	
	We will need the results obtained in Subsection~\ref{subsec:symm-clo-detail}, as well as a couple more.  We will state these results, prove Theorem~\ref{thm:mod-p-main}, and then finish the proofs of the results.
	
	The first result characterizes the duals of a class of linear codes.  This class of codes (called weighted Reed-Muller codes) was first introduced over \(\mb{F}_q^n\) by S\o rensen~\cite{sorensen-1992-weighted-RM}, who also gave a description of their duals.  The result we require is over a finite grid in \(\mb{F}_p^n\), and is a special case of a result by Camps, L\'opez, Matthews and Sarmiento~\cite[Theorem 2.2]{camps-lopez-matthews-2020-monomial-cartesian}.  Consider the indeterminates \(\mb{T}=(T_0,\ldots,T_r)\).  For any \(P(\mb{T})\in\mb{F}_p[\mb{T}]\), define
	\[
	\wdeg_p(P)=\deg P(\sigma_{p^0},\ldots,\sigma_{p_r})=\max\bigg\{\sum_{t=0}^r\alpha_tp^t:\coeff(\mb{T}^\alpha,P)\ne0\bigg\}.
	\]
	\begin{theorem}[{\cite[Theorem 2.2]{camps-lopez-matthews-2020-monomial-cartesian}}]\label{thm:RM-dual}
		Consider the finite grid \(S=S_0\times\cdots\times S_r=[0,p-1]^r\times[0,k]\subseteq\mb{F}_p^{r+1}\), for some \(r\in\mb{N},\,k\in[1,p-1]\). Let \(N=\sum_{i\in[0,r-1]}(|S_i|-1)p^i=(k+1)p^r-1\).  Let
		\[
		\mc{W}(S,d)=\Big\{P\coloneqq\begin{bmatrix}
			P(t)
		\end{bmatrix}_{t\in S}:P(\mb{T})\in\spn\{\mb{T}^\alpha:\alpha\in S\},\,\wdeg_p(P)\le d\Big\}.
		\]
		Then there exists \(\gamma_t\in\mb{F}_p\setminus\{0\}\) for every \(t\in S\), such that
		\begin{align*}
			\mc{W}(S,d)^\perp&=\Big\{\begin{bmatrix}
				\gamma_tQ(t)
			\end{bmatrix}_{t\in S}:Q(\mb{T})\in\spn\{\mb{T}^\beta:\beta\in S\},\,\wdeg_p(Q)\le N-d-1\Big\}\\
			&=\Big\{\diag(\gamma_t:t\in S)\cdot\begin{bmatrix}
				Q(t)
			\end{bmatrix}_{t\in S}:Q\in\mc{W}(S,N-d-1)\Big\}.
		\end{align*}
	\end{theorem}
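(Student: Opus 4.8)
The plan is to combine an explicit description of the dual vectors with a dimension count. Let $\mu=(|S_0|-1,\ldots,|S_r|-1)\in S$, so that $\mb{T}^\mu$ is the \emph{top} monomial among $\{\mb{T}^\alpha:\alpha\in S\}$. Since each coordinate of $\alpha\in S$ satisfies $\alpha_i\le|S_i|-1\le p-1$, distinct $\alpha\in S$ give distinct values $\sum_{i=0}^r\alpha_ip^i$ (uniqueness of base-$p$ representations), so $\mb{T}^\mu$ is the \emph{unique} monomial in $\spn\{\mb{T}^\alpha:\alpha\in S\}$ with $\wdeg_p$ equal to $N=\sum_{i=0}^r(|S_i|-1)p^i$. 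Also, because $S=S_0\times\cdots\times S_r$ is a Cartesian grid in which each $S_i$ consists of $|S_i|$ distinct elements of $\mb{F}_p$, evaluation is a linear isomorphism from $\spn\{\mb{T}^\alpha:\alpha\in S\}$ onto $\mb{F}_p^S$; consequently $\mc{W}(S,d)$ has dimension $\#\{\alpha\in S:\wdeg_p(\mb{T}^\alpha)\le d\}$, and each codeword has a unique polynomial representative in this span.

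First I would introduce a \emph{top-coefficient} functional. For $t\in S$ put $\gamma_t=\prod_{i=0}^r\big(\prod_{a\in S_i\setminus\{t_i\}}(t_i-a)\big)^{-1}\in\mb{F}_p\setminus\{0\}$; this is well defined and nonzero since the elements of each $S_i$ are distinct. Tensoring the formula for the leading coefficient of a univariate Lagrange interpolant, one gets $\sum_{t\in S}\gamma_t R(t)=\coeff(\mb{T}^\mu,R)$ for every $R\in\spn\{\mb{T}^\alpha:\alpha\in S\}$, and hence $\sum_{t\in S}\gamma_t R(t)=\coeff(\mb{T}^\mu,\wt{R})$ for an arbitrary $R\in\mb{F}_p[\mb{T}]$, where $\wt{R}$ denotes the reduction of $R$ modulo the vanishing ideal $I(S)=\big\langle\prod_{a\in S_0}(T_0-a),\ldots,\prod_{a\in S_r}(T_r-a)\big\rangle$.

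Next I would prove the inclusion $\diag(\gamma_t:t\in S)\cdot\mc{W}(S,N-d-1)\subseteq\mc{W}(S,d)^\perp$. Given $P$ with $\wdeg_p(P)\le d$ and $Q$ with $\wdeg_p(Q)\le N-d-1$, the dot product of $[P(t)]_t$ with $\diag(\gamma_t:t\in S)\cdot[Q(t)]_t$ equals $\sum_{t\in S}\gamma_t P(t)Q(t)=\coeff(\mb{T}^\mu,\wt{PQ})$. The key point is that reduction modulo $I(S)$ does not increase $\wdeg_p$: each reduction step rewrites a factor $T_i^{|S_i|}$ as an $\mb{F}_p$-combination of lower powers of $T_i$, which strictly decreases the $T_i$-contribution to $\wdeg_p$ from $|S_i|p^i$ to at most $(|S_i|-1)p^i$. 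Hence $\wdeg_p(\wt{PQ})\le\wdeg_p(PQ)\le\wdeg_p(P)+\wdeg_p(Q)\le N-1<N$, so $\mb{T}^\mu$ does not occur in $\wt{PQ}$ and the dot product vanishes. The second displayed equality in the theorem is then immediate from $[\gamma_t Q(t)]_t=\diag(\gamma_t:t\in S)\cdot[Q(t)]_t$ together with the definition of $\mc{W}(S,N-d-1)$.

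Finally I would close the argument by dimensions. The complementation map $\alpha\mapsto\mu-\alpha$ is a bijection of $S$ with $\wdeg_p(\mb{T}^{\mu-\alpha})=N-\wdeg_p(\mb{T}^\alpha)$, whence $\dim\mc{W}(S,d)+\dim\mc{W}(S,N-d-1)=|S|$. Therefore $\dim\mc{W}(S,d)^\perp=|S|-\dim\mc{W}(S,d)=\dim\mc{W}(S,N-d-1)=\dim\big(\diag(\gamma_t:t\in S)\cdot\mc{W}(S,N-d-1)\big)$, the last equality because $\diag(\gamma_t:t\in S)$ is invertible; combined with the inclusion above this forces equality. I expect the main obstacle to be precisely the monotonicity of $\wdeg_p$ under reduction modulo $I(S)$, together with the attendant observation (which uses $|S_i|\le p$) that $\mb{T}^\mu$ is the unique monomial of maximal $\wdeg_p$; the remaining steps are routine bookkeeping with Lagrange interpolation and the counting bijection.
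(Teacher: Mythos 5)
The paper does not prove this theorem at all: it is imported verbatim from Camps--L\'opez--Matthews--Sarmiento \cite[Theorem 2.2]{camps-lopez-matthews-2020-monomial-cartesian}, so there is no internal proof to compare against. Your argument is a correct, self-contained proof, and it is essentially the standard duality argument for monomial--Cartesian/weighted Reed--Muller codes (the same strategy as in the cited source and in Beelen--Datta): an explicit ``top-coefficient'' functional $R\mapsto\sum_{t\in S}\gamma_tR(t)=\coeff(\mb{T}^\mu,\wt{R})$ built from tensored Lagrange interpolation, the observation that reduction modulo the grid's vanishing ideal cannot increase $\wdeg_p$ (so the pairing of codewords from $\mc{W}(S,d)$ and $\mc{W}(S,N-d-1)$ vanishes), and the complementation bijection $\alpha\mapsto\mu-\alpha$ to match dimensions. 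All the delicate points are handled: uniqueness of $\mb{T}^\mu$ as the monomial of maximal weighted degree uses $|S_i|\le p$ exactly as you say, and the dimension count uses (implicitly, but harmlessly) that the dot product on $\mb{F}_p^S$ is nondegenerate so that $\dim W^\perp=|S|-\dim W$. One cosmetic remark: you silently use $N=\sum_{i=0}^{r}(|S_i|-1)p^i$, which is the correct value $(k+1)p^r-1$; the index range $[0,r-1]$ in the theorem statement is a typo.
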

	
	The second result characterizes, in a special case, when \(1^n\) is in the Z-closure of a symmetric set in \(\{0,1\}^n\).
	\begin{lemma}\label{lem:cl-symcl-lem}
		Let \(n=(k+1)p^r-1\) for some \(k\in[1,p-1]\), and \(d\in[0,n]\).  Then for any \(E\subseteq[0,n]\), \(n\in\zcl_{n,d}(E)\) if and only if \(n\in\symcl_{n,d}(E)\).
	\end{lemma}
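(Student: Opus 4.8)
The plan is to prove the two inclusions separately. One of them needs no hypothesis on $n$: every symmetric polynomial of degree at most $d$ vanishing on $\ul E$ lies in $\mc I_{n,d}(\ul E)$, so $\mc S_{n,d}(E)\subseteq\mc I_{n,d}(\ul E)$, hence $\zcl_{n,d}(E)\subseteq\symcl_{n,d}(E)$, and in particular $n\in\zcl_{n,d}(E)$ implies $n\in\symcl_{n,d}(E)$. The work is in the reverse implication, and the idea is to characterise each of the statements ``$n\in\zcl_{n,d}(E)$'' and ``$n\in\symcl_{n,d}(E)$'' by a dual condition, via Fact~\ref{fact:dual-code}, and then to observe that the dual condition for $\symcl$ is formally stronger than the one for $\zcl$.

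For $\zcl$ I would repeat the argument from the proof of Theorem~\ref{thm:clo-layer-p}: applying Fact~\ref{fact:dual-code} with $W=\RM_p(n,d)$, index subset $\ul E$, and distinguished index $1^n$, together with Theorem~\ref{thm:RM-usual} and Remark~\ref{rem:GRM-RM-support}, shows that $n\in\zcl_{n,d}(E)$ if and only if there is a polynomial $Q(\mb X)\in\mb F_p[\mb X]$ with $\deg Q\le n-d-1$ and $1^n\in\supp(Q)\subseteq\{1^n\}\cup\ul E$.

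For $\symcl$ the hypothesis $n=(k+1)p^r-1$, $k\in[1,p-1]$, enters. The base-$p$ digit map $i\mapsto(i_0,\ldots,i_r)$ is a bijection from $[0,n]$ onto the grid $S=[0,p-1]^r\times[0,k]$ of Theorem~\ref{thm:RM-dual}, and it is an isometry between $\mb F_p^{[0,n]}$ and $\mb F_p^S$ for the standard dot products. Under this identification the space of symmetric polynomial functions of degree at most $d$ on $\{0,1\}^n$ --- which is $\spn\{\sigma_0,\ldots,\sigma_d\}$ --- is carried onto the weighted Reed-Muller space $\mc W(S,d)$: by Corollary~\ref{cor:sym-digit}(a) and Lucas's theorem, $\sigma_m$ agrees as a function with $\prod_t\binom{\sigma_{p^t}}{m_t}$ for $m=\sum_t m_tp^t$, so it corresponds to a polynomial in $\mb T$ of $\wdeg_p$ exactly $m$ lying in $\spn\{\mb T^\alpha:\alpha\in S\}$, and since both spaces have dimension $d+1$ for $d\le n$ this injection is onto. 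Applying Fact~\ref{fact:dual-code} in $\mb F_p^{[0,n]}\cong\mb F_p^S$ with this space as $W$, index subset $E$, and distinguished index $n$, and invoking Theorem~\ref{thm:RM-dual} with $N=(k+1)p^r-1=n$ (the scalars $\gamma_t$ there being nonzero, so supports are preserved, cf.\ Remark~\ref{rem:GRM-RM-support}), I obtain: $n\in\symcl_{n,d}(E)$ if and only if there is a \emph{symmetric} polynomial $Q(\mb X)$ with $\deg Q\le n-d-1$ and $1^n\in\supp(Q)\subseteq\{1^n\}\cup\ul E$.

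Comparing the two characterisations finishes the argument: if $n\in\symcl_{n,d}(E)$, the witnessing symmetric polynomial $Q$ is in particular a polynomial of degree at most $n-d-1$ with $1^n\in\supp(Q)\subseteq\{1^n\}\cup\ul E$, so $n\in\zcl_{n,d}(E)$. I expect the main obstacle to be the bookkeeping in the third paragraph --- making the identification of symmetric polynomial functions of degree $\le d$ with $\mc W(S,d)$ fully precise, in particular verifying that $\wdeg_p$ on the $\mb T$-side matches the ordinary degree (after multilinearisation) on the $\mb X$-side, which rests on $\sigma_m$ being multilinear of degree exactly $m$ and on multilinearisation not increasing degree, and carrying out the dimension count that turns the natural injection into a bijection. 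The rest is a direct application of Fact~\ref{fact:dual-code} and Theorems~\ref{thm:RM-usual} and~\ref{thm:RM-dual}.
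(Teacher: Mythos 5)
Your proposal is correct and uses essentially the same machinery as the paper: Fact~\ref{fact:dual-code} applied twice, once via the Reed--Muller dual (Theorem~\ref{thm:RM-usual}) to characterise membership in $\zcl_{n,d}(E)$ and once via the weighted Reed--Muller dual (Theorem~\ref{thm:RM-dual}) to characterise membership in $\symcl_{n,d}(E)$, with Lucas's theorem doing the work of matching $\deg$ on the $\mb X$-side to $\wdeg_p$ on the $\mb T$-side. The only organisational difference is that the paper funnels $n\in\symcl_{n,d}(E)$ through a one-directional translation (Lemma~\ref{lem:poly-to-poly-sym}), finds the dual witness $Q(\mb T)$, substitutes $\sigma_{p^t}$ to get a symmetric $R(\mb X)$, and then applies the $\zcl$ dual characterisation, whereas you set up both ``iff'' dual characterisations first and then compare them; this is a cosmetic reshuffling, not a different proof.
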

	\begin{remark}
		It is easy to see that the assertion of Lemma~\ref{lem:cl-symcl-lem} is not true for general \(n\in\mb{Z}^+\).  In Counterexample~\ref{counterex:not-in-cl}, we see that over \(\mb{F}_2\), \(0\not\in\zcl_{5,2}(\{1,4\})\).  So by Proposition~\ref{pro:clo-prop} (b), we get \(5\not\in\zcl_{5,2}(\{1,4\})\).  Note that \(5=1+2^2\), and so \(5\) is not of the form required in the assumption of Lemma~\ref{lem:cl-symcl-lem}.  Now trivially, we have \(1\in\symcl_{5,2}(\{1,4\})\).  So by Lemma~\ref{lem:mod-p-translate} (b), we get \(5\in1\oplus4\subseteq\symcl_{5,2}(\{1,4\})\), since \(\ell_2(2)=2\).
	\end{remark}
	
	We are now ready to prove our main theorem.
	\begin{proof}[Proof of Theorem~\ref{thm:mod-p-main}]
		Clearly \(\zcl_{n,d}(E)\subseteq\symcl_{n,d}(E)\).  Now let us prove that \(\symcl_{n,d}(E)\subseteq\zcl_{n,d}(E)\).  Since \(n\ge4p^\ell-1\), we get \(n\ge2p^\ell+2d-1\); this means \(|[d,n-d]|\ge2p^\ell\).
		
		Consider any \(j\in\symcl_{n,d}(E)\).  Let \(j'\in(j\oplus p^\ell)\cap[d+p^\ell,d+2p^\ell-1]\).  By Lemma~\ref{lem:mod-p-translate} and Theorem~\ref{thm:clo-layer-p}, it is clear that \(j'\in\symcl_{n,d}(E)\), and further, that it is enough to show \(j'\in\zcl_{n,d}(E)\).  Let \(E'=E_{[j'-p^\ell,j'-1]}\).  By Observation~\ref{obs:LJ}, we have \(j'\in\symcl_d(n,E')\) and we need to show \(j'\in\zcl_{n,d}(E')\).
		
		We have two cases.
		
		\vst
		\noindent\tsf{Case} (i)\quad  \(j'-p^\ell\in E'\).  Since \(j'\ge d+p^\ell\), we have \(j'-p^\ell\ge d\).  Since \(j'\le d+2p^\ell-1\), we have \(j'-p^\ell\le d+p^\ell-1\le d+2p^\ell-1\le n-d\).  Thus \(j'-p^\ell\in[d,n-d]\).  So by Theorem~\ref{thm:clo-layer-p}, we have \(j'\in(j'-p^\ell)\oplus p^\ell\subseteq\zcl_{n,d}(E')\).
		
		\vst
		\noindent\tsf{Case} (ii)\quad  \(j'-p^\ell\not\in E'\).  Then \(E'\subseteq[j'-(p^\ell-1),j'-1]\), and so \(E'-(j'-(p^\ell-1))\subseteq[0,p^\ell-2]\).  Since \(j'\in\symcl_d(n,E')\), we have \(j'\in\symcl_d(E')\), and so by Proposition~\ref{pro:symcl-prop}, \(p^\ell-1=j'-(j'-(p^\ell-1))\in\symcl_d(E'-(j'-(p^\ell-1)))\).  This gives \(p^\ell-1\in\symcl_d(p^\ell-1,E'-(j'-(p^\ell-1)))\).  By Lemma~\ref{lem:cl-symcl-lem}, we get \(p^\ell-1\in\zcl_{p^\ell-1,d}(E'-(j'-(p^\ell-1)))\).  Then by Proposition~\ref{pro:clo-prop} (c), we get \(j'=p^\ell-1+(j'-(p^\ell-1))\in\zcl_{j',d}(E')\).  And finally, by Proposition~\ref{pro:clo-prop} (a), we get \(j'\in\zcl_{n,d}(E')\).
	\end{proof}
	
	We conclude by proving Lemma~\ref{lem:cl-symcl-lem}.  Towards this, let us prove yet another smaller result.
	\begin{lemma}\label{lem:poly-to-poly-sym}
		Let \(n=(k+1)p^r-1\) for some \(k\in[1,p-1]\), and \(d\in[0,n]\).  If \(n\in\symcl_{n,d}(E)\), then for every \(P(\mb{T})\in\mb{F}_p[\mb{T}]\) satisfying \(P(i_0,\ldots,i_r)=0\) for all \(i\in E\), and \(\wdeg_p(P)\le d\),  we have
		\[
		P\Big(\Big(\underbrace{(p-1),\ldots,(p-1)}_{r\tx{ times}},k\Big)\Big)=0.
		\]
	\end{lemma}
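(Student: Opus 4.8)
The plan is to turn a polynomial in $\mb{T}=(T_0,\ldots,T_r)$ into a symmetric polynomial in $\mb{X}$ via the substitution $T_t\mapsto\sigma_{p^t}(\mb{X})$, and then apply the hypothesis $n\in\symcl_{n,d}(E)$ essentially verbatim. First I would record the elementary digit bookkeeping. Since $n=(k+1)p^r-1=kp^r+(p-1)(p^{r-1}+\cdots+1)$, the $p$-ary expansion of $n$ is $(n_0,\ldots,n_r)=(\underbrace{p-1,\ldots,p-1}_{r},k)$, and every $i\in[0,n]$ satisfies $i_t=0$ for $t>r$ and $i_r\le k$, so the tuple $(i_0,\ldots,i_r)\in[0,p-1]^r\times[0,k]$ determines $i$ uniquely; the same applies to $|x|$ for any $x\in\{0,1\}^n$, since $|x|\le n<p^{r+1}$.

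Given $P(\mb{T})\in\mb{F}_p[\mb{T}]$ with $\wdeg_p(P)\le d$ and $P(i_0,\ldots,i_r)=0$ for all $i\in E$, I would set $g(\mb{X})\coloneqq P(\sigma_{p^0}(\mb{X}),\ldots,\sigma_{p^r}(\mb{X}))\in\mb{F}_p[\mb{X}]$. This is symmetric, being a polynomial in the elementary symmetric polynomials. Writing $P=\sum_\alpha\coeff(\mb{T}^\alpha,P)\mb{T}^\alpha$, each monomial contributes $\prod_t\sigma_{p^t}^{\alpha_t}$, which has degree at most $\sum_t\alpha_t p^t\le\wdeg_p(P)\le d$; hence $\deg g\le d$ as a function on $\{0,1\}^n$ (the squarefree reduction only lowers degree). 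By Corollary~\ref{cor:sym-digit} (a), for every $x\in\{0,1\}^n$ we have $\sigma_{p^t}(x)\equiv|x|_t\modulo{p}$, so $g(x)=P(|x|_0,\ldots,|x|_r)$. In particular, if $|x|\in E$ then $g(x)=P(|x|_0,\ldots,|x|_r)=0$ by hypothesis, so $g$ vanishes on $\ul{E}$, i.e.\ $g\in\mc{S}_{n,d}(E)$.

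Finally, since $n\in\symcl_{n,d}(E)=\mc{Z}(\mc{S}_{n,d}(E))$, the polynomial $g$ must vanish at $1^n$; but $g(1^n)=P(|1^n|_0,\ldots,|1^n|_r)=P(n_0,\ldots,n_r)=P(\underbrace{p-1,\ldots,p-1}_{r},k)$ since the $p$-ary digits of $n$ are as computed above, which is exactly the asserted conclusion. I do not expect a genuine obstacle here; the only points that need a little care are verifying the degree bound $\deg g\le\wdeg_p(P)$ through the squarefree reduction, and confirming that the digit tuples of elements of $[0,n]$ and of the Hamming weights of points of $\{0,1\}^n$ both lie in the grid $[0,p-1]^r\times[0,k]$, so that $P(i_0,\ldots,i_r)$ and $g(x)=P(|x|_0,\ldots,|x|_r)$ are legitimately evaluations of $P$ at points of that grid.
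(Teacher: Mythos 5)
Your proof is correct and follows essentially the same route as the paper: substitute $T_t\mapsto\sigma_{p^t}(\mb{X})$, note the resulting polynomial is symmetric of degree at most $d$ and vanishes on $\ul{E}$ via $\sigma_{p^t}(x)\equiv|x|_t\ (\mathrm{mod}\ p)$, then evaluate at $1^n$ using the $p$-ary digits of $n=(k+1)p^r-1$. The extra care you take with the squarefree reduction and the digit-tuple bookkeeping is sound but does not change the argument.
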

	\begin{proof}
		Consider any \(P(\mb{T})\in\mb{F}_p[\mb{T}]\) satisfying \(P(i_0,\ldots,i_r)=0\) for all \(i\in E\), and \(\wdeg_p(P)\le d\).  Then \(P(\sigma_{p^0}(\mb{X}),\ldots,\sigma_{p^r}(\mb{X}))\in\mb{F}_p[\mb{X}]\) is a symmetric polynomial with \(\deg(P(\sigma_{p^0},\ldots,\sigma_{p^r}))=\wdeg_p(P)\le d\).  Further, for any \(x\in\ul{E}\), we get \(P(\sigma_{p^0}(x),\ldots,\sigma_{p^r}(x))=P(|x|_0,\ldots,|x|_r)=0\).  Since \(n\in\symcl_{n,d}(E)\), this implies \(P(\sigma_{p^0}(1^n),\ldots,\sigma_{p^r}(1^n))=0\).  Further, since \(n=(k+1)p^r-1=\sum_{u=0}^{r-1}(p-1)p^u+kp^r\), we have \(\sigma_{p^u}(1^n)=p-1\) for \(u\in[0,r-1]\), and \(\sigma_{p^r}(1^n)=k\).  This completes the proof.
	\end{proof}

	We now have everything in place to prove Lemma~\ref{lem:cl-symcl-lem}.
	\begin{proof}[Proof of Lemma~\ref{lem:cl-symcl-lem}]
		Clearly if \(n\in\zcl_{n,d}(E)\), then \(n\in\symcl_{n,d}(E)\).  
		
		Conversely, suppose \(n\in\symcl_{n,d}(E)\).  Note that since \(n=(k+1)p^r-1\), we have \(\sigma_{p^u}(1^n)=n_u=(p-1)\) for \(u\in[0,r-1]\), and \(\sigma_{p^r}(1^n)=n_r=k\).  By Lemma~\ref{lem:poly-to-poly-sym}, we get
		\[
		P\Big(\Big(\underbrace{(p-1),\ldots,(p-1)}_{r\tx{ times}},k\Big)\Big)=0,
		\]
		for every \(P(\mb{T})\in\mb{F}_p[\mb{T}]\) satisfying \(P(i_0,\ldots,i_r)=0\) for all \(i\in E\), and \(\wdeg_p(P)\le d\).  So by Theorem~\ref{thm:RM-dual}, Fact~\ref{fact:dual-code} and Remark~\ref{rem:GRM-RM-support}, this implies that there exists \(Q(\mb{T})\in\mb{F}_p[\mb{T}]\) such that \(\wdeg_p(Q)\le n-d-1\) and
		\begin{align}
		\Big(\underbrace{(p-1),\ldots,(p-1)}_{r\tx{ times}},k\Big)&\in\supp(Q)\subseteq\Big\{\Big(\underbrace{(p-1),\ldots,(p-1)}_{r\tx{ times}},k\Big)\Big\}\cup\{(i_0,\ldots,i_r):i\in E\}.\label{eq:Q-supp}
		\end{align}
		Define \(R(\mb{X})=Q(\sigma_{p^0}(\mb{X}),\ldots,\sigma_{p^r}(\mb{X}))\).  Then \(\deg R=\wdeg_p(Q)\le n-d-1\), and further by (\ref{eq:Q-supp}), we have
		\[
		1^n\in\supp(R)\subseteq\{1^n\}\cup\ul{E}.
		\]
		By Theorem~\ref{thm:RM-usual}, Fact~\ref{fact:dual-code} and Remark~\ref{rem:GRM-RM-support}, this implies \(n\in\zcl_{n,d}(E)\).
	\end{proof}

	\bibliographystyle{alpha}
	\bibliography{references}
	
	%\appendix
	%\appendixpage
	%\addappheadtotoc

\end{document}